\setlist[enumerate]{leftmargin=.5in}
\setlist[itemize]{leftmargin=.5in}
\newcounter{alglinenumber}
\crefname{alglinenumber}{line}{lines}
\Crefname{alglinenumber}{Line}{Lines}
\pgfplotsset{compat=1.18}
\newabbreviation{ula}{ULA}{unadjusted Langevin algorithm}
\newabbreviation{ald}{ALD}{annealed Langevin dynamics}
\newabbreviation{myula}{MYULA}{Moreau-Yosida regularized unadjusted Langevin algorithm}
\newabbreviation{daz}{DAZ}{diffusion at absolute zero}
\newabbreviation{apgd}{APGD}{accelerated proximal gradient descent}
\newabbreviation{dsm}{DSM}{denoising score matching}
\newabbreviation{sde}{SDE}{stochastic differential equation}
\newabbreviation{map}{MAP}{maximim a-posteriori}
\newabbreviation{mcmc}{MCMC}{Markov chain Monte Carlo}
\newabbreviation{mmse}{MMSE}{minimum mean-squared-error}
\newabbreviation{tdv}{TDV}{total deep variation}
\newabbreviation{mri}{MRI}{magnetic resonance imaging}
\newabbreviation{iid}{i.i.d.}{independent and identically distributed}
\newabbreviation{lsc}{l.sc.}{lower semicontinuous}
\newabbreviation{em}{EM}{Euler-Maruyama}
\newabbreviation{tv}{TV}{total variation}
\newabbreviation{bp}{BP}{belief propagation}
\newabbreviation{acf}{ACF}{auto-correlation function}
\newabbreviation{rock}{SK-ROCK}{stochastic
orthogonal Runge--Kutta--Chebyshev method}
\definecolor{siamblue}{RGB}{38, 82, 132}
\definecolor{siamgreen}{RGB}{56, 126, 33}
\definecolor{amaranth}{RGB}{228, 154, 176}
\definecolor{melon}{RGB}{236, 184, 165}
\definecolor{orangi}{RGB}{255, 193, 7}
\definecolor{red}{RGB}{228, 26, 28}
\definecolor{blue}{RGB}{55, 126, 184}
\definecolor{green}{RGB}{77, 175, 74}
\definecolor{orange}{RGB}{255, 127, 0}
\definecolor{purple}{RGB}{152, 78, 163}
\definecolor{brown}{RGB}{166, 86, 40}
\definecolor{pink}{RGB}{247, 129, 191}
\pgfplotsset{every axis plot/.append style={line width=.8pt}}
\tikzset{font={\fontsize{8pt}{12}\selectfont}}
\newcommand{\drawcolorbar}{
	\pgfplotscolorbardrawstandalone[
		scale=0.22, colormap={blackwhite}{gray(0cm)=(0); gray(1cm)=(1)},
		colorbar horizontal, colorbar style={ticks=none},
	]%
}
\newcommand{\R}{\mathbb{R}}
\newcommand{\N}{\mathbb{N}}
\newcommand{\Wc}{\mathcal{W}}
\newcommand{\Pc}{\mathcal{P}}
\newcommand{\Nc}{\mathcal{N}}
\newcommand{\Bc}{\mathcal{B}}
\newcommand{\E}{\mathbb{E}}
\newcommand{\Ac}{\mathcal{A}}
\newcommand{\1}{1}
\newcommand{\Fourier}{\mathcal{F}}
\newcommand{\ModulusWeakConvexity}{\rho_G}
\newcommand{\ConvexityRadius}{M}
\newcommand{\tmax}{t_{\mathrm{max}}}
\newcommand{\dd}{\mathrm{d}}
\newcommand{\Ball}[2]{B_{#1}(#2)}
\newcommand{\BallC}[2]{B^c_{#1}(#2)}
\newcommand{\intbound}{K_{\mathrm{max}}}
\newcommand{\MoreauName}{Moreau}
\DeclareMathOperator{\prox}{prox}
\DeclareMathOperator{\TV}{TV}
\DeclareMathOperator{\acf}{ACF}
\DeclareMathOperator{\NC}{NC}
\newcommand{\ie}{\textit{i.e.}}
\newcommand{\eg}{\textit{e.g.}}
\newcommand{\cf}{\textit{cf.}}
\crefname{hypothesis}{Hypothesis}{Hypotheses}
\crefname{ass}{Assumption}{Assumptions}
\newlist{subassumption}{enumerate}{1}
\crefname{subassumptioni}{Assumption}{Assumptions}
\Crefname{subassumptioni}{Assumption}{Assumptions}
\setlist[subassumption,1]{labelindent=.8cm,leftmargin=*,label=\arabic*.,ref=4.3.\arabic*}
\title{Diffusion at Absolute Zero: Langevin Sampling \\Using Successive Moreau Envelopes\thanks{Submitted to the editors on March 27, 2025. The concepts presented in this manuscript have already been introduced in a (very preliminary) workshop paper submitted to \emph{2025 IEEE Statistical Signal Processing Workshop}. This version of the manuscript is available at \cite{habring2025diffusion}.
}}
\author{%
    Andreas Habring\thanks{%
        Institute of Visual Computing, Graz University of Technology (\email{andreas.habring@tugraz.at}, \email{falk@tugraz.at}, \email{thomas.pock@tugraz.at}).%
    }
    \and Alexander Falk\footnotemark[2]
    \and Martin Zach\thanks{%
        Biomedical Imaging Group, École Polytechnique Fédérale de Lausanne, 1015 Lausanne, Switzerland and Center for Biomedical Imaging, 1015 Lausanne, Switzerland (\email{martin.zach@epfl.ch})
    }
    \and Thomas Pock\footnotemark[2]%
}
\definecolor{c1}{HTML}{648FFF}
\definecolor{c2}{HTML}{DC267F}
\definecolor{c3}{HTML}{FE6100}
\definecolor{c4}{HTML}{FFB000}
\pgfplotsset{compat=1.18}
\begin{document}

\maketitle

\begin{abstract}
    We propose a method for sampling from Gibbs distributions of the form $\pi(x)\propto\exp(-U(x))$ that leverages a family $(\pi^{t})_t$ of approximations of the target density which is deliberately constructed such that $\pi^{t}$ exhibits favorable properties for sampling when $t$ is large, and such that $\pi^{t}$ approaches \( \pi \) as $t$ approaches 0. This sequence is obtained by replacing (parts of) the potential $U$ with its Moreau envelope. Through the sequential sampling from $\pi^{t}$ for decreasing values of $t$ by a Langevin algorithm with appropriate step size, the samples are guided from a simple starting density to the more complex target quickly. We prove that \( t \mapsto \pi^t  \) is Lipschitz continuous in the total variation distance and Hölder continuous in the Wasserstein-$p$ distance, that the sampling algorithm is ergodic, and that it converges to the target density without assuming convexity or differentiability of the potential $U$. In addition to the theoretical analysis, we show experimental results that support the superiority of the method in terms of convergence speed and mode-coverage of multi-modal densities over current algorithms. The experiments range from one-dimensional toy-problems to high-dimensional inverse imaging problems with learned potentials.
\end{abstract}

\begin{keywords}
Langevin diffusion, Markov chain Monte Carlo, sampling, inverse imaging
\end{keywords}

\begin{MSCcodes}
65C40, 65C05, 68U10, 65C60
\end{MSCcodes}

\section{Introduction}
This article is concerned with sampling from distributions of the form
\begin{equation}\label{eq:potential}
    \pi(x) = \frac{\exp{\bigl(-U(x) \bigr)}}{\int \exp{\bigl( -U(y) \bigr)}\, \dd y}.
\end{equation}
The potential $U = F + G$ is composed of the functions $F, G:\R^d\rightarrow [0,\infty)$ and is such that $\int \exp{\bigl( -U(x) \bigr)}\, \dd x<\infty$.
The consideration of the additive structure $U=F+G$ is motivated by frequent applications where $U$ can be split into a well-behaved function $F$ and a more difficult-to-handle function $G$ (\cf \cite{narnhofer2022posterior,kobler2020total}).
In particular, we cover settings where $G$ and, consequently, $U$ are nonconvex or nondifferentiable.

Sampling from distributions of the form \eqref{eq:potential} is a task that frequently arises in, \eg, Bayesian inverse problems\cite{luo2023bayesian}, mathematical imaging\cite{pereyra2016proximal,durmus2018efficient}, machine learning\cite{zach2023stable,zach2021computed}, and uncertainty quantification\cite{narnhofer2022posterior}.
As an example, training strategies for machine learning such as maximum-likelihood estimation \cite{du2020improved,zach2023stable,nijkamp2020anatomy}, which are becoming increasingly popular, often rely on sampling algorithms as subroutines. 
For most interesting potentials, sampling from \eqref{eq:potential} necessitates \gls{mcmc} methods where a Markov chain $(X_k)_{k\geq1}\subset \R^d$ is deliberately designed such that the law of $X_k$ converges to the target distribution $\pi$ as $k\rightarrow\infty$.
To tackle high-dimensional problems often encountered in imaging applications, Markov chains derived from discretizations of the Langevin diffusion \gls{sde}
\begin{equation}\label{eq:langevin_sde}
    \dd X_t = -\nabla U (X_t)\, \dd t + \sqrt{2}\,\dd W_t,
\end{equation}
where $(W_t)_t$ denotes Brownian motion, have become popular. Note, however, that \eqref{eq:langevin_sde} requires $U$ to be differentiable.
A straightforward way to obtain a Markov chain that approximates \eqref{eq:langevin_sde} is via the \gls{em} discretization that leads to the \gls{ula}
\begin{equation}\label{eq:euler_maruyama}
    X_{k+1} = X_k -\tau\nabla U(X_k) + \sqrt{2\tau} Z_k,
\end{equation}
where $\tau>0$ is the step size of the discretization and $(Z_k)_k$ are \gls{iid} standard Gaussian random vectors.
Various convergence results of such schemes (under stronger assumptions than in this paper) can be found in \cite{roberts1996exponential,durmus2019analysis,durmus2019high,durmus2017nonasymptotic,dalalyan2016theoretical,lamberton2003recursive,habring2024subgradient,fruehwirth2024ergodicity}.
While more sophisticated schemes have been proposed, the \gls{em} discretization remains the most popular due to the simple implementation and diminishing returns for higher-order schemes in practice.

Practitioners face two challenges when using \gls{ula}: Slow mixing and strong assumptions on the potential.
The former ultimately led to the birth of diffusion models in~\cite{song2019generative}, while the latter sparked research on Langevin algorithms that are applicable to nondifferentiable and nonconvex potentials~\cite{durmus2018efficient,luu2021sampling}.
In this work, we propose \gls{daz} as a novel sampling method that combines ideas from diffusion models with ideas from nondifferentiable Langevin sampling.
In particular, we propose the successive Langevin sampling of a sequence of distributions $\pi^t$ for \( t \to 0 \) which is obtained by replacing $G$ with its \MoreauName{} envelope with parameter $t$.
The \MoreauName{} envelope convexifies the potential (we formalize this statement in \Cref{lemma:non-convexity}), allows using larger step sizes, and, consequently, has favorable mode-coverage behavior when $t$ is large and converges to the target as $t\rightarrow 0$.
In addition and in contrast to diffusion models, the proposed sampling algorithm does not require a trained model for all \( t > 0 \) since the \gls{mmse} denoising step is replaced with a \gls{map} denoising problem that can be solved with efficient optimization algorithms for a large class of potentials.
Consequently, the proposed method is applicable to a large class of models used for inverse problems.
\paragraph{Contributions}
We provide the following contributions with regards to the proposed sampling algorithm that is summarized in \Cref{algo}:
\begin{enumerate}
    \item In the nondifferentiable and nonconvex setting, we prove the exponential ergodicity of the Langevin diffusion as well as the geometric ergodicity of the corresponding discretization for any fixed Moreau parameter \( t \).
    Moreover, we show that the stationary distribution of the discretization converges to that of the continuous-time Langevin diffusion as the step size vanishes (\Cref{prop:ergodic1,prop:erg_small_t}).
    \item We show that the map $t\mapsto \pi^t$ is Lipschitz continuous with respect to the total variation distance, as well as Hölder continuous with Hölder exponent $p\in [1,\infty)$ under appropriate integrability assumptions on the density $\pi^t$ (\Cref{prop:curve_lipschitz}). This continuity ensures that a small change in $t$ implies a small change in \( \pi^t \), which is a crucial requirement for a meaningful annealing strategy.
    \item We relate the proposed sampling algorithm to diffusion models by showing that the distributions $(\pi^t)_t$ can be understood as a \emph{zero-temperature} limit of the corresponding diffusion distributions (\Cref{prop:diffusion_potential,prop:diffusion_gradient}).
    \item We provide an extensive set of numerical experiments.
        We demonstrate the efficiency of the proposed sampling algorithm quantitatively on toy examples that allow for an efficient computation of reference distributions and qualitatively on several high-dimensional applications in imaging that leverage potentials that are learned from data.
\end{enumerate}

\begin{algorithm}[t]
\caption{Diffusion at Absolute Zero (DAZ)}\label{algo}
\begin{algorithmic}[1]
\refstepcounter{alglinenumber}
\Require Number of Moreau levels $N$ with schedule $\{t_n\}_{n=1}^N$ and corresponding Langevin step sizes $\{\tau_n\}_{n=1}^N$, number of Langevin steps per Moreau level $K$, initial condition \( X^N_1 \) \refstepcounter{alglinenumber}
\For{$n=N,\dots, 1$}\refstepcounter{alglinenumber}
    \For{$k=1,\dots K$}
        \State $Z_k^n\sim\mathcal{N}(0,I)$
        \State $X_{k+1}^n = X_k^{n} - \tau_n \nabla F(X_k^n)
                - \frac{\tau_n}{t_n}\left(X_k^n - {\prox}_{t_n G}(X_k^n) \right) + \sqrt{2\tau_n} Z_k^n$ \refstepcounter{alglinenumber}\label{alg:update}
    \EndFor
    \State $X_{0}^{n-1} = X_{K}^n$
\EndFor
\State \Return $X_{K}^{1}$
\end{algorithmic}
\end{algorithm}

\section{Related Work}\label{sec:related}
In recent years, data-driven approaches have significantly influenced applied mathematics and related fields.
A central aspect of these data-driven approaches is their close relation to probabilistic modeling, which naturally led to an increased interest in sampling strategies.
In the following, we provide an overview of relevant works that are closely related to the present article.
To facilitate the comparison between our work and some of these works, we revisit some of them---after the presentation of our results---in more detail in~\cref{ssec:comparison}.
\paragraph{Langevin sampling}
A large body of work investigates conditions for ergodicity of the Langevin diffusion \eqref{eq:langevin_sde} and \gls{ula} \eqref{eq:euler_maruyama} and the relation of the respective stationary distributions to the target under various assumptions on the potential \( U \).
Under the assumption that \( U \) is differentiable, the authors of the early work \cite{roberts1996exponential} show exponential ergodicity of the diffusion with stationary measure $\pi$, as well as geometric ergodicity of \gls{ula} under growth conditions on $\nabla U$.
Later works focused on establishing nonasymptotic convergence results with explicit rates.
For strongly convex (at least outside a ball) and differentiable potentials with Lipschitz continuous gradient, explicit rates are provided in \cite{dalalyan2016theoretical,cheng2018sharp}.
Weaker conditions on the growth of $U$ or $\nabla U$ are considered in \cite{durmus2017nonasymptotic}.
In \cite{durmus2019analysis}, \gls{ula} (and similar schemes) are interpreted as an iterative minimization of the Kullback-Leibler divergence to the target density (\cf \cite{variational1998jordan}) and the authors provide nonasymptotic convergence results.
 
The popularity of nondifferentiable regularizers (see, \eg, \cite{bredies2010total,rudin1992nonlinear}) in Bayesian inverse problems in imaging sparked research on sampling algorithms that can handle nondifferentiable potentials.
Methods that rely on the subgradient or the proximal map of (nondifferentiable parts of) $U$ are proposed and analyzed in \cite{durmus2019analysis,habring2024subgradient,fruehwirth2024ergodicity}.
The resulting sampling algorithms closely resemble proximal-gradient-style methods from optimization.
In \cite{burger2024coupling,narnhofer2022posterior}, a primal-dual sampling scheme inspired by the Chambolle-Pock method for optimization \cite{chambolle2011first} is used to tackle nondifferentiable potentials.
The primal-dual-inspired methods are ad-hoc algorithms that have been demonstrated to work well in practice (see, \eg, \cite{narnhofer2022posterior}), but convergence has only been proven under strong differentiability assumptions \cite{burger2024coupling}.
An alternative approach for sampling from nondifferentiable densities is to replace the target density with a differentiable surrogate.
This has been proposed, \eg, in \cite{durmus2018efficient,pereyra2016proximal} where the nondifferentiable part of the potential is substituted with its \MoreauName{} envelope;
the authors named the resulting algorithm \gls{myula}.
In \cite{ehrhardt2024proximal}, the convergence of such methods is analyzed when the proximal map is inexact.
 
The \MoreauName{} envelope-based approach from \cite{durmus2018efficient,pereyra2016proximal} is extended to nonconvex potentials in~\cite{luu2021sampling} where a result about convergence of the \gls{em} discretization to the continuous-time diffusion in expectation for finite time is presented \cite[Theorem 1]{luu2021sampling}.
We extend this by an extensive \emph{ergodic analysis} of the Langevin diffusion and its discretization and a proof of the convergence of the stationary distribution of the discrete chain to the target as the step size vanishes (in the \gls{tv} norm, among others).
Moreover, in \cite{luu2021sampling} like in \cite{durmus2018efficient}, the \MoreauName{} envelope was used  to obtain a single surrogate density for $\pi$.
In contrast, we consider a sequence of densities $(\pi^t)_t$ with different \MoreauName{} parameters that approach $\pi$, similar to diffusion models.
The use of different \MoreauName{} parameters necessitates distinguishing the case of small parameters that lead to differentiable envelopes and large parameters where differentiability is not guaranteed.
In both cases, we prove the ergodicity of the discrete chain.

In \cite{pereyra2020skrock} the authors propose to replace the simple \gls{em} discretization used in \gls{ula} by a Runge-Kutta stochastic integration scheme which extends the deterministic Chebyshev method \cite{skvortsov2011explicit} to SDEs. The proposed method is coined \gls{rock}. While theoretical results about the convergence speed of \gls{rock} are unfortunately not yet available \cite[Section 3.1.1]{pereyra2020skrock}, the method provides a significant acceleration in practice (see also \cref{sec:experiments}).

\paragraph{Annealed Langevin sampling}
A particular inspiration for the present article has been annealed Langevin sampling \cite{song2019generative}, which later lead to diffusion models \cite{song2020score}. 
Annealed Langevin sampling is related to \gls{daz} in the sense that both approaches propose to approximate the target $\pi$ by a family of distributions $(\pi^t)_t$ which is designed so that $\pi^t$ exhibits favorable properties for large $t$ and so that $\pi^t\rightarrow \pi$ as $t\rightarrow 0$ in an appropriate sense. The difference between the two approaches lies in the definition of the family $\pi^t$. In annealed Langevin sampling, $\pi^t$ is defined as the convolution of $\pi$ with a Gaussian of variance $t$, \ie, $\pi^{t} = \pi*\Nc(0,\sqrt{t})$. In \gls{daz}, $\pi^t$ is defined by replacing $G$ by its Moreau envelope. In order to draw from $\pi$, both \gls{daz} and annealed Langevin sampling sample successively from their respective regularized distributions $\pi^{t_k}$ for $k=n,n-1,\dotsc, 0$ using \gls{ula} where $t_n>t_{n-1}>\dots>t_0$ is a predefined parameter schedule. Initially, for $t_k$ large, the distribution $\pi^{t_k}$ admits a rather favorable structure and, thus, allows for efficient sampling. As $t_k$ is decreased, the samples are guided to the complex target distribution.

A crucial difference between annealed Langevin sampling and the proposed method is their applicability to a given and generic (\ie, without special structure and not trained in a method-specific manner) potential \( U \).
In particular, when \( \pi^t = \pi*\Nc(0,\sqrt{t}) \) is constructed for annealed Langevin sampling, one evaluation of $\nabla \log \pi^t$ (which is required for the Langevin sampling) at any point requires the computation of the \gls{mmse}-estimate of the denoising of that point under the prior \( \pi \).
For a general potential, for instance a deep neural network, this task is as hard as the original problem of sampling from \( \pi \).
This challenge is typically circumvented by the off-line direct training of a family of such \gls{mmse} denoisers~\cite{song2019generative}.
However, this renders the method impractical for the sampling of a given potential \( U \).
In contrast, when \( \pi^t(x) \propto \exp\bigl( -U^t(x) \bigr) \) is constructed for \gls{daz}, one evaluation of $\nabla \log \pi^t$ can be computed efficiently whenever the proximal map of $G$ can be computed efficiently.
This can be accomplished by the resolution of a \gls{map} denoising problem under a prior with potential \( G \) (see~\cref{def:moreau envelope} and the discussion thereafter) by efficient algorithms for a large class of functions that includes neural networks.
Consequently, the proposed method can be readily applied to a large class of potentials.
The relation between annealed Langevin sampling and \gls{daz} is formalized more rigorously in \Cref{prop:diffusion_potential,prop:zero_temp_TV,prop:diffusion_gradient} where we show that the sequence of distributions used in the present work can be viewed as a zero-temperature limit of the sequence of distributions used in diffusion models.

\paragraph{Successive regularization}
\Gls{daz} can be understood as a sampling analog to a successive regularization approach that has recently been proposed in the context of optimization.
For the minimization of a possibly nonconvex and nondifferentiable function $H$, the authors of \cite{heaton2024global} propose to alternate gradient descent steps on the Moreau envelope of \( H \) and update steps on the Moreau parameter \( t \).
Since the Moreau envelope leaves global minima unchanged and, under certain conditions, local minimizers of the Moreau envelope are global minimizers of $H$ \cite[Lemma A.6]{heaton2024global}, this constitutes a valid approach to finding the global minimizers of \( H \).
However, while for our sampling approach we require that the Moreau parameter approaches \( 0 \), this need not be the case for optimization \cite{heaton2024global}.

\section{Preliminaries}
In this section, we introduce some notations and mathematical preliminaries that are used throughout the paper.
We start with the definition of the (regular) subdifferential, which is a crucial part in the rigorous treatment of nondifferentiable functions.
\begin{definition}[Regular subdifferential]
    The \emph{(regular) subdifferential} of a function $H:\R^d\rightarrow\R$ at the point $x \in \R^d$ is the set
\[
    \partial H(x) = \left\{v\in\R^d\;\middle|\; \liminf_{\substack{y\rightarrow x\\y\neq x}}\frac{H(y) - H(x) -  \langle v, y-x\rangle}{\|y-x\|} \geq 0\right\}.
\]
An element $v\in\partial H(x)$ is referred to as a \emph{(regular) subgradient} of $H$ at $x$. 
\end{definition}
Our strategy relies heavily on the \MoreauName{} envelope, which we now formally define.
\begin{definition}[\MoreauName{} envelope]\label{def:moreau envelope}
    For a function $H:\R^d\rightarrow \R$ the \emph{\MoreauName{} envelope} with \emph{\MoreauName{} parameter} $t>0$ is defined as 
    \begin{equation}\label{eq:moreau}
        M_H^t(x) \coloneqq \inf_{y\in\R^d} H(y)+\tfrac{1}{2t}\|x-y\|^2.
    \end{equation}
\end{definition}
The \MoreauName{} envelope has the property that $M_H^t(x) \leq H(x)$ for all \( x \in \R^d \) and \( t > 0 \) and, if $H$ is proper, \gls{lsc} and there exists $t>0$ such that $M_H^t(x)>-\infty$ for some $x\in \R^d$, the map $(x,t) \mapsto M_H^t(x)$ is continuous on \( \R^d \times (0, \infty) \).
In addition $M^t_H(x) \rightarrow H(x)$ for all \( x \in \R^d \) as $t\rightarrow 0$, which motivates its use it as an approximation of $H$ \cite[Theorem 1.25]{rockafellar2009variational}.
\begin{definition}[Proximal map]
    The \emph{proximal map} of a function \( H : \R^d \to \R \) with \MoreauName{} parameter \( t > 0 \) assigns to any \( x \in \R^d \) the (possibly multi-valued or empty) set
    \[
        \prox_{tH}(x) = \operatorname*{argmin}_{y \in \R^d} H(y) + \tfrac{1}{2t} \| x - y \|^2.
    \]
\end{definition}
A crucial relationship is that $\nabla M_H^t(x) \in \frac{1}{t}\bigl(x-{\prox}_{t H}(x)\bigr)$ at all points $x \in \R^d$ where $M_H^t$ is differentiable \cite[Example 10.32]{rockafellar2009variational}.
This yields a practical way of computing the gradient of $M^t_H$ by solving an optimization problem.
\paragraph{Probability}
Let $\Bc(\R^d)$ be the Borel $\sigma$-algebra on $\R^d$.
We denote the space of all probability measures on $\Bc(\R^d)$ as $\Pc(\R^d)$ and the subspace of all probability measures with bounded $p$-th moment as $\Pc_p(\R^d) \coloneqq\left\{ \mu\in\Pc(\R^d)\;|\; \int\|x\|^p\;\dd\mu(x)<\infty\right\}$.
The Dirac measure concentrated at $x\in \R^d$ is denoted as $\delta_x\in \Pc(\R^d)$.
For two probability measures $\mu,\nu\in \Pc(\R^d)$, we denote their Wasserstein-$p$ distance as
\[
    \mathcal{W}_p(\mu,\nu) = \inf_{X\sim \mu,\; Y\sim \nu}\E[\|X-Y\|^p]^{\frac{1}{p}} = \inf_{\gamma \in \Pi(\mu,\nu)}\left(\int \|x-y\|^p d \gamma(x,y)\right)^{\frac{1}{p}},
\]
where $\Pi(\mu,\nu)$ denotes the set of all probability measures on $\R^d\times\R^d$ with marginals $\mu$ and $\nu$ \cite[Definition 1.6]{villani2009optimal}, and their \gls{tv} distance as
\[
    \| \mu - \nu \|_{\mathrm{TV}} = \sup_{A \in \Bc(\R^d)} |\mu(A) - \nu(A)|.
\]
Moreover, we define a Markov kernel on $\R^d$ as a map $M: \R^d\times \Bc(\R^d)\rightarrow [0,1]$ such that for any $x\in \R^d$, $M(x,\cdot)$ is a probability measure and for any $A\in \Bc(\R^d)$, $M(\cdot,A)$ is measurable. For $\mu\in \Pc(\R^d)$, we define the probability measure $\mu M\in \Pc(\R^d)$ by
\[
    \mu M(A) = \int M(y,A)\,\mathrm{d}\mu(y)
\]
for any \( A\in \Bc(\R^d) \).
We denote the $k$-fold application of a Markov kernel \( M \) to a measure \( \mu \) as $\mu M^k = \mu M\ldots M$ (\( k \) times).
A Markov kernel can also be interpreted as a linear map that maps bounded and measurable functions to bounded and measurable functions via $f\mapsto Mf$, with $Mf(x) = \int f(y)\,\dd M(x,y)$. 

Let $\mu\in \Bc(\R^d)$.
A Markov semi-group is a family $(P_t)_{t\geq 0}$ where $P_t$ is a linear operator that maps bounded and measurable functions to bounded and measurable functions and is such that $P_t\1=\1$ with $\1$ denoting the constant function with value one, $P_tf\geq 0$ if $f\geq 0$, for every $f\in L^2(\R^d,\mu)$ it holds that $P_tf\rightarrow f$ in $L^2$ as $t\rightarrow 0$, for every $1\leq p<\infty$, $P_t$ extends to a bounded (contraction) operator on $L^p(\R^d,\mu)$, and $P_t\circ P_s = P_{t+s}$, $s,t\geq 0$.
The generator $\Ac$ of a Markov semi-group is an operator defined via 
\[
\Ac f = \lim_{t\rightarrow 0}\frac{1}{t}(P_tf - f)
\]
whose domain consists of all functions $f\in L^2(\R^d,\mu)$ for which the limit exists. For details on Markov semi-groups and their generators, see \cite{bakry2014markov}. In most cases the the Markov semi-group is given as a family of Markov kernels \cite[Section 1.2.2]{bakry2014markov}.
\paragraph{Miscellaneous}
We denote the Lipschitz constant of a Lipschitz continuous function $H:\R^d\rightarrow\R$ as $L_H$.
We denote the ball centered at \( m \in \R^d \) with radius \( R > 0 \) as \( \Ball{R}{m} \coloneqq \{ x \in \R^d \mid \| x - m \| \leq R \} \), and its complement in \( \R^d \) as \( \BallC{R}{m} = \R^d \setminus \Ball{R}{m} \).
\section{Diffusion at Absolute Zero}
We propose \emph{\gls{daz}} to sample from $\pi$ as defined in \eqref{eq:potential}:
\Gls{daz} combines ideas from diffusion methods \cite{song2020score} and nondifferentiable sampling \cite{durmus2018efficient} by considering the sequence of perturbed densities $\pi^t(x) \propto \exp{\bigl(-U^t(x)\bigr)}$ where
\[
    U^t(x)\coloneqq F(x) + M_G^t(x)
\]
for $t>0$.
That is, we replace the function $G$ with its \MoreauName{} envelope, whereas $F$ is left as is.
This splitting strategy is advantageous when the proximal map of \( G \) can be computed efficiently and the proximal map of \( F + G \) is difficult to compute.
However, we show in \Cref{rmk:step_size} that the choice $G=U$ and $F \equiv 0$ has favorable properties with respect to the selectable step sizes when the proximal map of the original potential can be computed efficiently.
As a sampling strategy, we apply annealed Langevin sampling to $(\pi^t)_t$.
The proposed sampling algorithm is summarized in~\Cref{algo}.
There, $0<t_0<t_1<\dots<t_N$ denotes a sequence of \MoreauName{} parameters and $(\tau_n)_n$ a corresponding sequence of step sizes used within the inner loop. This inner loop consists simply of several updates of the conventional \gls{ula} with step size $\tau_n$ applied to the potential $U^{t_n}$, \ie, 
\[
    X^{k+1}_n = X_n^k  -\tau_n \nabla U^{t_n}(X_n^k) +\sqrt{2\tau_n}Z_n^k,
\]
starting from some \( X^0_n \) that is informed from the last iterate of the previous Moreau parameter.
The update step in \cref{alg:update} in \cref{algo} is obtained by plugging in $U^t = F + M^t_G$ as well as $\nabla M^t_G(x) = \frac{1}{t}(x-\prox_{tG}(x))$.
In \cite{durmus2018efficient}, the main motivation for using the \MoreauName{} envelope was to handle nondifferentiable points of $G$.
In the present work, we also exploit the \emph{convexifying} behavior of the \MoreauName{} envelope.
This behavior is exemplified in \Cref{fig:moreau_gm} where the Moreau envelope connects the separated local minima of the potential as $t$ increases.
The convexifying property is now formalized.
\begin{figure}
    \centering
    \begin{tikzpicture}
        \begin{axis}[
            width=0.49\textwidth,
            title=$x \mapsto \exp(-M_G^t(x))$,
            enlargelimits=true, 
            axis on top,         
            clip=true            
        ]
            \addplot graphics [
                xmin=-5, xmax=5,
                ymin=0, ymax=1.6
            ] {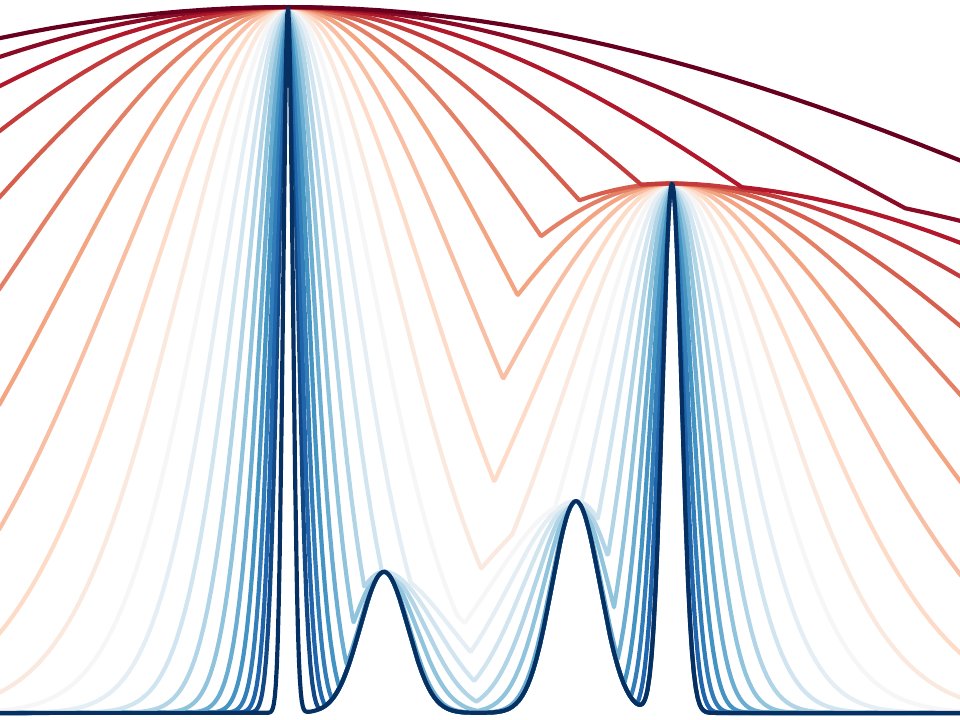};
        \end{axis}
    \end{tikzpicture}
    \hfill
\begin{tikzpicture}[spy using outlines={rectangle, magnification=2, width=4cm, height=0.8cm, connect spies}]
        \begin{axis}[
            width=0.49\textwidth,
            title=$x \mapsto M_G^t(x)$,
            enlargelimits=true, 
            axis on top,         
            clip=true,            
            colormap/RdBu,
            colormap={reverse RdBu}{
            indices of colormap={
                \pgfplotscolormaplastindexof{RdBu},...,0 of RdBu}
            },
            colorbar,
            colorbar style ={
                title=$t$,
                ytick={0.,100},
            },
            point meta min = 0.01,
            point meta max = 100
        ]
            \addplot graphics [
                xmin=-5, xmax=5,
                ymin=0, ymax=129
            ] {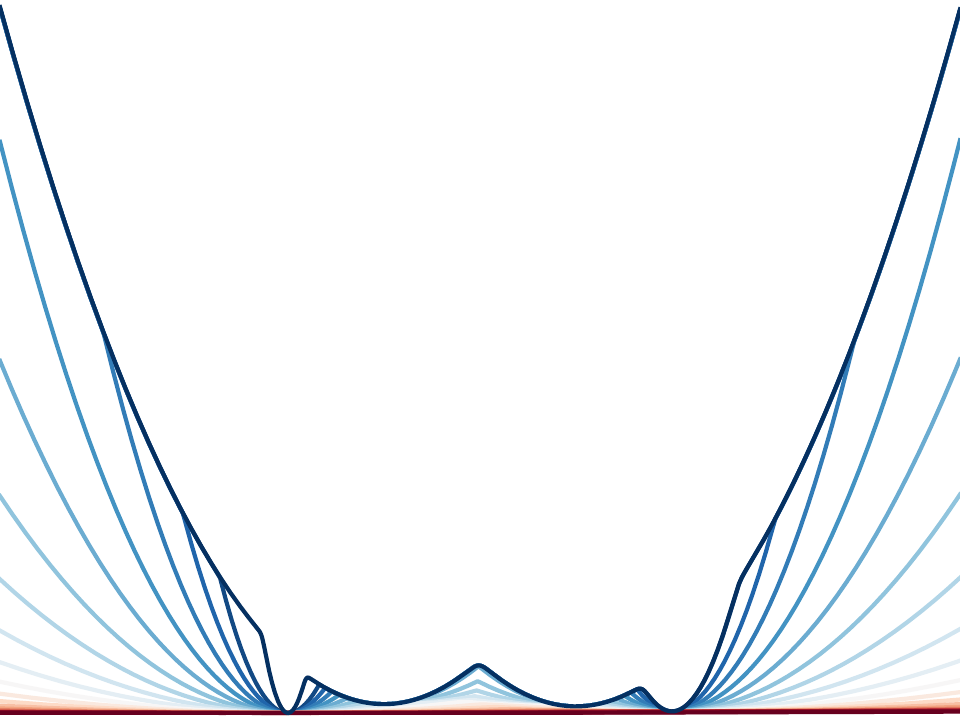};

            \coordinate (spypoint) at (axis cs:0,5.5);
            \coordinate (magnifyglass) at (axis cs:0,100);

        \end{axis}
   \node[rectangle, width=2.01cm, height=0.45cm, draw, black] (glas)  at (spypoint) {};
   \node[rectangle, draw, black, inner sep=1pt,yshift=-0.1cm] (zoom) at (magnifyglass) {\includegraphics[width=3.2cm]{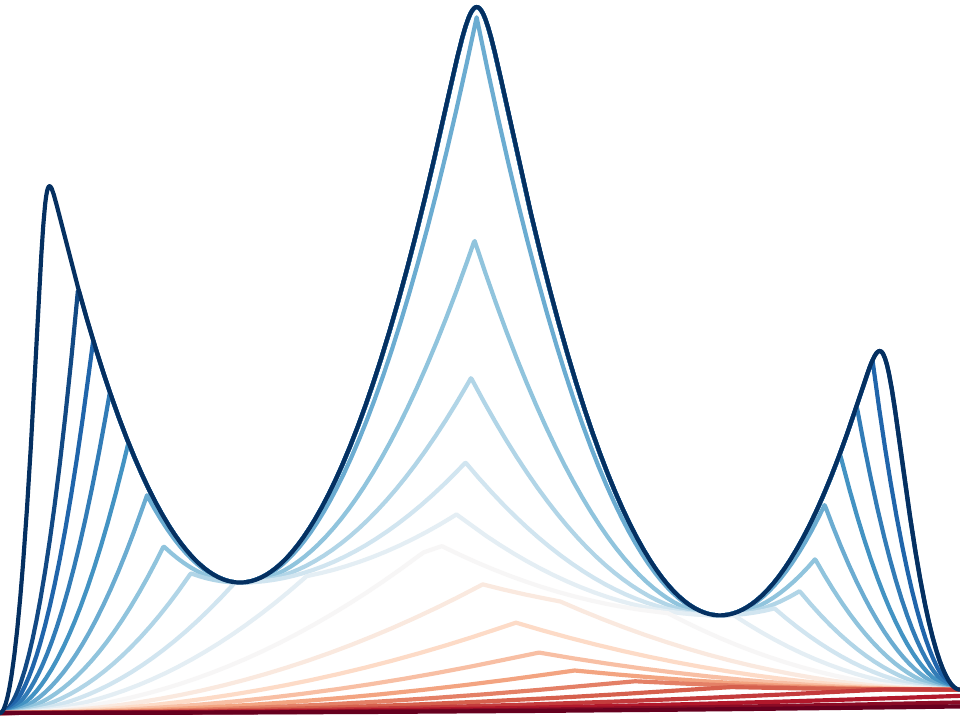}};

   \path (glas) edge[-, auto] (zoom);

    \end{tikzpicture}
    \caption{%
        \MoreauName{} envelopes of a Gaussian mixture for a sequence of \MoreauName{} parameters $t \in [\num{1e-2},\num{1e2}]$. Note how the Moreau envelope \emph{convexifies} the potential with increasing $t$.
    }
\label{fig:moreau_gm}
\end{figure}

\begin{lemma}\label{lemma:non-convexity}
    Define the \emph{nonconvexity} of a function $H:\R^d\rightarrow \R$ as the (possibly infinite) number
    \begin{equation}
        \NC(H) \coloneqq\sup_{\substack{x,y\in\R^d\\\lambda\in [0,1]}} H(\lambda x+(1-\lambda) y) - \lambda H(x) - (1-\lambda)H(y).
    \end{equation}
    Assume $\prox_{tH}(x)$ is nonempty for all $x \in \R^d$.
    Then, $\NC(M^t_H)\leq \NC(H)$.
\end{lemma}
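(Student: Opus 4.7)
The plan is to exploit the jointness of the Moreau envelope: the quantity $M^t_H(x)$ is an infimum of a sum of two terms in $y$, namely $H(y)$ and a squared distance that is convex in $(x,y)$. Because the distance part is already convex, all of the potential nonconvexity must come from $H$, so we should be able to transfer the defect bound from $H$ directly onto $M^t_H$ by choosing a suitable competitor in the infimum.

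Concretely, fix $x_1,x_2\in\R^d$ and $\lambda\in[0,1]$. Using the assumption that the proximal map is nonempty, pick $y_i\in\prox_{tH}(x_i)$ for $i=1,2$, so that
\[
    M^t_H(x_i) = H(y_i) + \tfrac{1}{2t}\|x_i - y_i\|^2.
\]
Set $x_\lambda = \lambda x_1 + (1-\lambda) x_2$ and $y_\lambda = \lambda y_1 + (1-\lambda) y_2$. Plugging $y_\lambda$ into the infimum defining $M^t_H(x_\lambda)$ yields the upper bound
\[
    M^t_H(x_\lambda) \leq H(y_\lambda) + \tfrac{1}{2t}\bigl\| \lambda(x_1 - y_1) + (1-\lambda)(x_2 - y_2)\bigr\|^2.
\]

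The next step is to handle each of the two summands on the right. For the squared-norm term, convexity of $\|\cdot\|^2$ gives
\[
    \bigl\| \lambda(x_1 - y_1) + (1-\lambda)(x_2 - y_2)\bigr\|^2 \leq \lambda\|x_1 - y_1\|^2 + (1-\lambda)\|x_2-y_2\|^2.
\]
For the $H$ term, the very definition of $\NC(H)$ gives
\[
    H(y_\lambda) \leq \lambda H(y_1) + (1-\lambda) H(y_2) + \NC(H).
\]
Adding these and regrouping terms produces
\[
    M^t_H(x_\lambda) \leq \lambda M^t_H(x_1) + (1-\lambda) M^t_H(x_2) + \NC(H),
\]
and taking the supremum over $x_1,x_2,\lambda$ finishes the proof.

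I do not expect any serious obstacle: the main idea is just to use the product structure of the minimization in the definition of $M^t_H$, letting the convexity of the quadratic penalty absorb itself and leaving only $\NC(H)$ as the residual nonconvexity defect. The only subtlety to flag is the need for $\prox_{tH}(x)$ to be nonempty, which is why this hypothesis appears in the statement; without it one would have to work with $\varepsilon$-minimizers of $H(\cdot)+\tfrac{1}{2t}\|x-\cdot\|^2$ and let $\varepsilon\to 0$, but the argument is otherwise identical.
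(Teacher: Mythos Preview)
Your proof is correct and follows essentially the same approach as the paper: pick proximal points $y_i\in\prox_{tH}(x_i)$, use their convex combination as a competitor in the infimum defining $M^t_H(x_\lambda)$, bound the quadratic part by convexity of $\|\cdot\|^2$, and bound the $H$ part by the definition of $\NC(H)$. The paper's argument is identical up to notation.
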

\begin{proof}
    Let \( x, x^\prime \) be arbitrary points in \( \R^d \) and let $p\in \prox_{tH}(x)$, $p^\prime \in \prox_{tH}(x^\prime)$, $x_\lambda = \lambda x + (1-\lambda)x^\prime$, and $p_\lambda = \lambda p + (1-\lambda)p'$.
    It follows by definition of the \MoreauName{} envelope that
    \begin{equation*}
        \begin{aligned}
            M_H^t(x_\lambda)& - \lambda M_H^t(x)-(1-\lambda)M_H^t(x')\\
            &\leq  H(p_\lambda) - \lambda H(p) - (1-\lambda)H(p')
            +\tfrac{1}{2t}\|x_\lambda - p_\lambda\|^2 - \tfrac{\lambda}{2t}\|x-p\|^2 - \tfrac{1-\lambda}{2t}\|x'-p'\|^2 \\
            &\leq \NC(H).
        \end{aligned}
    \end{equation*}
    Since \( \tfrac{1}{2t}\|x_\lambda - p_\lambda\|^2 - \tfrac{\lambda}{2t}\|x-p\|^2 - \tfrac{1-\lambda}{2t}\|x'-p'\|^2 \leq 0 \) due to (strict) convexity.
    Taking the supremum over $x$, $x'$, and $\lambda$ on the left-hand side concludes the proof.
\end{proof}
\begin{remark}
    $\NC(H)=0$ if and only if $H$ is convex:
    If $\NC(H)=0$ it follows immediately that $H$ is convex.
    If, on the other hand, $H$ is convex,  $\NC(H)\leq 0$ by definition and it can be lower bounded by \( 0 \) by choosing \( x = y \).
\end{remark}
The convexifying behavior of the \MoreauName{} enables \gls{daz} to cover several modes of a multi-modal distribution more quickly.

We now give the assumptions on the potential $U = F + G$ that enable a thorough theoretical analysis of the proposed sampling algorithm.
Remarks about the assumptions are discussed immediately below.
\begin{ass}
    We make the following assumptions on the potential \( U = F + G \):
    \begin{subassumption}
        \item \label{ass:f lipschitz gradient} $F$ is differentiable and its gradient is Lipschitz continuous.
        \item \label{ass:g weakly convex} $G$ is locally bounded and weakly convex, \ie, there exists a modulus of weak convexity $\ModulusWeakConvexity > 0$ such that $G+\frac{\ModulusWeakConvexity}{2}\|\cdot\|^2$ is convex.
        \label{ass:g lipschitz or integrable} In addition, $G$ is either Lipschitz continuous or such that $\int \exp{\bigl(-G(x)\bigr)}\,\dd x<\infty$.
        \item \label{ass:convex outside ball} $F$ and $G$ are convex outside a ball, \ie, there exists an $\ConvexityRadius > 0$ such that for any $x,y \in \BallC{\ConvexityRadius}{0}$ it holds that
        \[
            G(\lambda x + (1-\lambda)y)\leq \lambda G(x) + (1-\lambda)G(y)
        \]
        and identically for $F$.
        \item \label{ass:phi integrable} Let $\tmax <\frac{1}{\ModulusWeakConvexity}$ and define $\phi(s)\coloneqq \sup \{\|p\|\;|\;p\in \partial G\bigl(\prox_{tG}(x)\bigr), \;\|x\|\leq s,\; t\leq \tmax\}$. It holds true that 
        \begin{equation}
            \int \phi(\|x\|)^2 \exp{\bigl( -U^{\tmax}(x) \bigr)}\,\dd x \eqqcolon \intbound < \infty.
            \label{eq:integrability_phi}
        \end{equation}
    \end{subassumption}
    \label{ass}
\end{ass}
\begin{remark}\label{remark:ass}
    We provide a list of implications of \Cref{ass} to facilitate the understanding of the types of potentials our theory covers.
    Moreover, some of the implications will be used within the proofs.
    \begin{enumerate}
        \item The weak convexity of $G$ is relevant thrice.
        First, it implies that \( G \) admits a regular subgradient everywhere:
        The function $\widetilde{G} \coloneqq G + \frac{\ModulusWeakConvexity}{2}\|\,\cdot\,\|^2$ admits a (convex) subgradient everywhere and by \cite[8.8 Exercise, 8.12 Proposition]{rockafellar2009variational} it follows that $\partial G (x) = \partial\widetilde{G}(x) - \ModulusWeakConvexity x$.
        Second, when $t<\frac{1}{\ModulusWeakConvexity}$ the map $y\mapsto\frac{1}{2t}\|x-y\|^2 + G(y)$ is strongly convex and, consequently, for such \( t \) the proximal map of \( G \) is at most single-valued.
        Third, it implies local boundedness of the subgradients of $G$, that is, boundedness of the set
        \[
            \bigcup\limits_{x \in \Ball{R}{0}}\partial G(x)
        \]
        for any $R>0$:
        Since \( \widetilde{G} \) is bounded from below and locally bounded from above, by \cite[9.14 Example]{rockafellar2009variational}, $\widetilde{G}$ is locally Lipschitz continuous and, consequently, $\partial \widetilde{G}$ is locally bounded by the respective Lipschitz constant.
        Local boundedness of $\partial G$ follows from $\partial G(x) = \partial\widetilde{G}(x)-\ModulusWeakConvexity x$.
        Moreover, local Lipschitz continuity of $G$ also implies \emph{global} continuity.
        Continuity of $G$ will be used frequently, \eg, in order to ensure continuity of the proximal map in \Cref{lemma:prox_time_cont}.
        Local boundedness of $\partial G$ is needed to ensure coercivity of the proximal map in \Cref{lemma:prox_coercive}.
        \item Since $G$ is bounded from below and continuous, $\prox_{tG}(x)$ is nonempty for all \( x \in \R^d \).
        \item The assumption that $G$ is either Lipschitz continuous or satisfies the integrability condition $\int \exp{\bigl(-G(x)\bigr)}\,\dd x<\infty$ ensures that $\int \exp{\bigl(-U^t(x)\bigr)}\,\dd x<\infty$, \ie, that the potential $U^t$ defines a proper Gibbs distribution, by an adaptation of \cite[Proposition 1]{durmus2018efficient} together with \Cref{lemma:U_linear_growth}.
        Without this assumption, the smoothing properties of the \MoreauName{} envelope might lead to a violation of the growth requirements on \( U^t \) that are needed for the integrability of \( x \mapsto \exp\bigl( -U^t(x) \bigr)\).\label{rmk_ass_proper}
        \item The definition of \emph{convexity outside a ball} implies also a \emph{first-order} version, that is, for any $x,y \in \BallC{\ConvexityRadius}{0}$ and $v_x\in\partial G(x)$, $v_y\in\partial G(y)$ it holds true that 
        \[\langle v_x-v_y,x-y\rangle\geq 0.\]
            The converse implication, however, is only true in a weaker sense.
            First order convexity outside the ball $\Ball{\ConvexityRadius}{0}$ implies $G(\lambda x + (1-\lambda)y)\leq \lambda G(x) + (1-\lambda) G(y)$ only whenever the entire line connecting $x$ and $y$ is outside the ball, \ie, $[x,y] \subset \BallC{\ConvexityRadius}{0}$.
        \item The last integrability assumption is technical and typically no issue in practice.
        When $G$ is Lipschitz continuous (\eg, $G$ in the $\ell^1$ norm or \gls{tv}) it is equivalent to integrability of $x \mapsto \exp{\bigl( -U^{\tmax}(x) \bigr)}$.
            Otherwise, \Cref{lemma:U_linear_growth} shows that there there exist $r, c>0$ such that $U(x) \geq c \|x\|$ for all \( x \in \BallC{r}{0} \).
            The same holds true for $U^{\tmax}$ as a consequence of \Cref{cor:moreau_convex_outside}.
            Therefore, for $x$ large enough $\exp{\bigl(-U^{\tmax}(x)\bigr)} \leq \exp{(-c\|x\|)}$, which leads to \eqref{eq:integrability_phi} being satisfied as long as $\partial G(\prox_{tG}(x))$ does not grow exponentially fast in $\|x\|$.
            In particular, if $U^{\tmax}$ is superexponential (\cf, \Cref{def:superexponential} below) it also follows that for $\|x\|$ sufficiently large $\|\prox_{tG}(x)\|\leq\|x\|+c'$ for some $c'>0$ (\Cref{lem:prox_growth_bound}). Since $G$ is locally Lipschitz, it is also almost everywhere differentiable and the constraint reduces to integrability of 
            \[
                x \mapsto \sup_{\|y\|\leq\|x\|+c'}\|\nabla G(y)\|^2\exp{(-c\|x\|)}.
            \]
    \end{enumerate}
\end{remark}

\subsection{Ergodicity of Diffusion at Absolute Zero}
In this section we investigate ergodicity and convergence properties of the \gls{ula} subroutine in \Cref{algo} for a fixed \MoreauName{} parameter \( t_n \).
This is the Markov chain
\begin{equation}\label{eq:ULA_subroutine}
    \left\{
        \begin{aligned}
            X_0 &= x_0, \\
            X_{k+1} &= X_k - \tau \nabla F(X_k) - \tfrac{\tau}{t}\left(X_k - {\prox}_{t G}(X_k) \right) + \sqrt{2\tau} Z_k,
        \end{aligned}
    \right.
\end{equation}
initialized at an arbitrary \( x_0 \in \R^d \).
We omit the subscript $n$ since the \MoreauName{} parameter is fixed.
We denote the Markov kernel that corresponds to one iteration of \eqref{eq:ULA_subroutine} as $R_\tau$, that is, if $X_0\sim \mu$, then $X_1\sim \mu R_\tau$.
This problem has been thoroughly analyzed in \cite{durmus2018efficient} for convex \( G \) and differentiable \( F \) with Lipschitz continuous gradient.
We recall a shortened version of the main result from~\cite[Section 3.2]{durmus2018efficient}.
\begin{theorem}
    Let $F$ and $G$ be lower bounded, $F$ convex and differentiable with Lipschitz continuous gradient, and $G$ proper, convex, and \gls{lsc}
    In addition, assume that \( G \) is either Lipschitz continuous or such that \( \int \exp{\bigl(-G(x)\bigr)}\,\dd x < \infty \).
    Then for any $x_0\in\R^d$ the Markov chain \eqref{eq:ULA_subroutine} is geometrically ergodic, \ie, there exists a probability measure $\pi^t_\tau$, and constants $C>0$, $\kappa\in (0,1)$ such that
    \[
        \|\delta_{x_0} R^k_\tau - \pi^t_\tau\|_{\mathrm{TV}}\leq C \kappa^k.
    \]
    Moreover, $\|\pi^t_\tau - \pi^t\|_{\mathrm{TV}}\rightarrow 0$ as $\tau\rightarrow 0$.
\end{theorem}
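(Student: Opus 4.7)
The plan is to view \eqref{eq:ULA_subroutine} as plain \gls{ula} applied to the smooth convex potential $U^t = F + M_G^t$ and invoke the standard Foster--Lyapunov drift plus minorization machinery, as in \cite[Section 3.2]{durmus2018efficient}. Because $G$ is proper, convex, and \gls{lsc}, the Moreau envelope $M_G^t$ is convex and differentiable with $\nabla M_G^t(x) = \tfrac{1}{t}(x-\prox_{tG}(x))$ being $(1/t)$-Lipschitz (by firm nonexpansiveness of $\prox_{tG}$). Hence $U^t$ is convex with a globally Lipschitz gradient of constant $L_F + 1/t$, and the associated continuous-time Langevin SDE admits $\pi^t\propto \exp(-U^t)$ as its unique invariant measure.

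For the geometric ergodicity, I would establish a drift condition $R_\tau V \le \lambda V + b\,\mathbf{1}_C$ on a compact set $C$ with a Lyapunov function $V(x) = \exp(a\|x\|)$ (or $V(x) = 1 + \|x\|^2$ when $G$ is Lipschitz). The crucial ingredient is a tail estimate $\langle \nabla U^t(x), x\rangle \ge c\|x\|$ for $\|x\|$ large: in the Lipschitz case the term $\nabla M_G^t$ is bounded, so this reduces to coercivity of $\nabla F$, which follows from convexity of $F$ combined with linear growth at infinity (a consequence of $\pi^t$ being proper, cf.~\Cref{remark:ass}); in the integrability case for $G$, one applies \Cref{lemma:U_linear_growth} to $U^t$ together with the convex subgradient inequality to deduce the same bound. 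Expanding $V(X_{k+1})$ along the update rule and using Lipschitz continuity of $\nabla U^t$ then yields the drift for $\tau$ small enough. The minorization on compact sets is immediate, because $R_\tau(x,\cdot)$ is Gaussian with a mean depending continuously on $x$, so its transition density is jointly continuous and strictly positive and is therefore uniformly bounded below on any compact product set. The standard Meyn--Tweedie criterion then produces a unique invariant measure $\pi^t_\tau$ and the geometric rate $\|\delta_{x_0}R_\tau^k - \pi^t_\tau\|_{\mathrm{TV}}\le C\kappa^k$.

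For the bias $\|\pi^t_\tau - \pi^t\|_{\mathrm{TV}}\to 0$ as $\tau\to 0$, I would couple the discrete chain to the continuous-time Langevin diffusion from a common initial distribution. On any finite time window the Euler--Maruyama weak-error bound, valid because $-\nabla U^t$ is globally Lipschitz, gives a discretization gap of order $\sqrt{\tau}$; since the continuous diffusion converges in TV to $\pi^t$ and the discrete chain in TV to $\pi^t_\tau$ geometrically, letting the time window grow (for example logarithmically) with $1/\tau$ balances the three contributions and yields the claim.

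The main obstacle is the drift computation in the case where $G$ is only integrable, because $U^t$ then grows only linearly at infinity, so an exponential Lyapunov function is necessary and one has to carefully verify that the drift $-\tau\nabla U^t(X_k)$ dominates the Gaussian fluctuation $\sqrt{2\tau}Z_k$ inside $V(X_{k+1}) = \exp(a\|X_{k+1}\|)$ for $\tau$ sufficiently small. All remaining steps are standard adaptations of known \gls{ula} ergodicity arguments.
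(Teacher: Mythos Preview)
This theorem is not proved in the paper; it is explicitly recalled from \cite[Section 3.2]{durmus2018efficient}, so there is no in-paper proof to compare against. Your outline---drift--minorization for geometric ergodicity of \gls{ula} on the smooth convex potential $U^t$, followed by a finite-horizon discretization-error bound combined with ergodicity of the continuous diffusion to control the bias---is precisely the strategy of that reference. One minor remark: in the Lipschitz-$G$ case you reduce the drift to ``coercivity of $\nabla F$'', but convexity plus linear growth of $F$ alone need not give $\langle \nabla F(x),x\rangle \ge c\|x\|$ (growth could be anisotropic); the clean argument applies the convex subgradient inequality directly to $U^t$, whose linear growth in \emph{every} direction follows from integrability of $\exp(-U^t)$ via \Cref{lemma:U_linear_growth}, exactly as you do in the other case.
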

In addition to ergodicity, explicit and nonasymptotic convergence rates can be found in \cite{durmus2018efficient}.

In the sequel, we analyze the ergodicity of \eqref{eq:ULA_subroutine} in the nonconvex case.
We provide two main results:
The ergodicity of the Markov chain independent of the \MoreauName{} parameter \( t \) (\Cref{prop:ergodic1}) and a stronger result that proves the convergence of the chain to the continuous-time diffusion when $t<\frac{1}{\ModulusWeakConvexity}$ (\Cref{prop:erg_small_t}).
The theorems rely on several preliminary results concerning the \MoreauName{} envelope and the proximal map.
Parts of these are covered in \cite{rockafellar2009variational} but we include results that are tailored to the setting in this work in order to provide a self-contained article.
The proofs of \Cref{lemma:prox_time_cont,,lemma:Moreau_diffble_small_t,,lemma:prox_coercive,,Moreau_diffble_outside,,cor:moreau_convex_outside} are given in \Crefrange{sec:prox_time_cont}{sec:appendix_moreau_convex_outside}.
\begin{lemma}%
    \label{lemma:prox_time_cont}
    The map $(x,t)\mapsto{\prox}_{tG}(x)$ is continuous on $\R^d\times\bigl(0,\frac{1}{\ModulusWeakConvexity}\bigr)$ and, for $t\in\bigl(0,\frac{1}{\ModulusWeakConvexity}\bigr)$, $x\mapsto {\prox}_{tG}(x)$ is Lipschitz continuous with Lipschitz constant $\frac{1}{1-\ModulusWeakConvexity t}$.
\end{lemma}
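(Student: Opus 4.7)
The plan is to prove the two claims separately: first the Lipschitz continuity in $x$ for fixed $t<1/\ModulusWeakConvexity$, then joint continuity on the whole of $\R^d\times(0,1/\ModulusWeakConvexity)$. The single-valuedness needed for both statements is already ensured by strong convexity of $y\mapsto G(y)+\frac{1}{2t}\|x-y\|^2$ whenever $t<1/\ModulusWeakConvexity$, as noted in \Cref{remark:ass}.

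For the Lipschitz bound, fix $t\in(0,1/\ModulusWeakConvexity)$, pick $x,x'\in\R^d$, and set $p={\prox}_{tG}(x)$, $p'={\prox}_{tG}(x')$. The first-order optimality condition reads $\tfrac{1}{t}(x-p)\in\partial G(p)$ and analogously for $p'$. Passing to the convexified function $\widetilde{G}\coloneqq G+\tfrac{\ModulusWeakConvexity}{2}\|\cdot\|^2$ via the calculus rule $\partial\widetilde{G}(p)=\partial G(p)+\ModulusWeakConvexity p$ recalled in \Cref{remark:ass}, we obtain
\[
\tfrac{1}{t}(x-p)+\ModulusWeakConvexity p\in\partial\widetilde{G}(p),\qquad \tfrac{1}{t}(x'-p')+\ModulusWeakConvexity p'\in\partial\widetilde{G}(p').
\]
Since $\widetilde{G}$ is convex, $\partial\widetilde{G}$ is monotone, so the inner product of the difference of the two subgradients with $p-p'$ is nonnegative. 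Expanding and applying Cauchy--Schwarz produces
\[
\bigl(\tfrac{1}{t}-\ModulusWeakConvexity\bigr)\|p-p'\|^2\leq\tfrac{1}{t}\|x-x'\|\,\|p-p'\|,
\]
which rearranges to $\|p-p'\|\leq\frac{1}{1-\ModulusWeakConvexity t}\|x-x'\|$, the claimed Lipschitz estimate.

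For joint continuity, given $(x_n,t_n)\to(x,t)$ in $\R^d\times(0,1/\ModulusWeakConvexity)$, I will decompose
\[
\|{\prox}_{t_nG}(x_n)-{\prox}_{tG}(x)\|\leq\|{\prox}_{t_nG}(x_n)-{\prox}_{t_nG}(x)\|+\|{\prox}_{t_nG}(x)-{\prox}_{tG}(x)\|.
\]
Since $t_n$ is eventually in a compact subinterval of $(0,1/\ModulusWeakConvexity)$, the Lipschitz constants $1/(1-\ModulusWeakConvexity t_n)$ are uniformly bounded, so the first term is $O(\|x_n-x\|)$ and vanishes. The second term is controlled by a standard variational/compactness argument: set $p_n\coloneqq{\prox}_{t_nG}(x)$ and use the minimality of $p_n$ against the test point $y_0=x$, together with $G\geq 0$, to get $\|x-p_n\|^2\leq 2t_n G(x)$, so $(p_n)_n$ is bounded. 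Extract a subsequence (still denoted $p_n$) converging to some $\bar p$. For every $y\in\R^d$,
\[
G(p_n)+\tfrac{1}{2t_n}\|x-p_n\|^2\leq G(y)+\tfrac{1}{2t_n}\|x-y\|^2,
\]
and, since $G$ is continuous by \Cref{remark:ass}, passing to the limit yields $G(\bar p)+\tfrac{1}{2t}\|x-\bar p\|^2\leq G(y)+\tfrac{1}{2t}\|x-y\|^2$. Hence $\bar p$ is a minimizer, which is unique due to strong convexity, so $\bar p={\prox}_{tG}(x)$. As the limit is independent of the subsequence, the whole sequence converges, establishing continuity of $t\mapsto{\prox}_{tG}(x)$ and thereby joint continuity.

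The main obstacle is the second step; everything else is a direct application of monotonicity. The delicate points are to ensure the uniform $L^\infty$-bound on $(p_n)_n$ without assuming $t_n$ stays away from $0$ globally (handled by the compact-subinterval argument around the limit $t$) and to legitimately pass to the limit in the inequality defining the proximal map, which hinges on the continuity of $G$ that follows from weak convexity combined with local boundedness in \cref{ass:g weakly convex}.
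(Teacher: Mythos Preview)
Your proof is correct. The Lipschitz argument is identical to the paper's: optimality conditions, monotonicity of $\partial\widetilde{G}=\partial G+\ModulusWeakConvexity(\cdot)$, and Cauchy--Schwarz. For joint continuity the paper proceeds slightly differently: it takes a joint sequence $(x_n,t_n)\to(x,t)$, bounds $p_{t_n}(x_n)$ by testing the proximal objective against $y=0$, extracts a convergent subsequence, and passes to the limit in the variational inequality directly---without ever invoking the Lipschitz estimate. Your route instead proves the Lipschitz bound first and then uses it to peel off the $x$-variation, reducing the problem to continuity in $t$ alone (for which you run the same compactness-and-pass-to-the-limit argument, testing against $y=x$ rather than $y=0$). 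Your organisation is a bit more modular and makes explicit why the Lipschitz constant being locally uniform in $t$ matters; the paper's version is marginally shorter since it handles both variables at once. Either way the substance is the same.
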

\begin{lemma}%
    \label{lemma:Moreau_diffble_small_t}
    For $t\in \bigl(0,\frac{1}{\ModulusWeakConvexity}\bigr)$, $M^t_G$ is differentiable and the gradient is given by $\nabla M^t_G(x)=\frac{1}{t}\bigl(x-\prox_{t G}(x)\bigr)$.
\end{lemma}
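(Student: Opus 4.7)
The plan is to establish the differentiability by a sandwiching (or two-sided comparison) argument that bounds the increment $M_G^t(x+h)-M_G^t(x)$ from above and below by affine functions of $h$ whose slopes coincide up to $O(\|h\|)$, leveraging the Lipschitz continuity of the proximal map already furnished by \Cref{lemma:prox_time_cont}.

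First I would fix $x\in\R^d$ and $h\in\R^d$, and set $p = \prox_{tG}(x)$, $p' = \prox_{tG}(x+h)$, which are single valued for $t<1/\ModulusWeakConvexity$ by \Cref{remark:ass}. The upper bound comes from using $p$ as a suboptimal candidate in the definition of $M_G^t(x+h)$:
\begin{equation*}
    M_G^t(x+h) \leq G(p) + \tfrac{1}{2t}\|x+h-p\|^2 = M_G^t(x) + \tfrac{1}{t}\langle x-p, h\rangle + \tfrac{1}{2t}\|h\|^2.
\end{equation*}
The lower bound is obtained symmetrically by using $p'$ as a candidate in the definition of $M_G^t(x)$:
\begin{equation*}
    M_G^t(x) \leq G(p') + \tfrac{1}{2t}\|x-p'\|^2 = M_G^t(x+h) - \tfrac{1}{t}\langle x-p', h\rangle - \tfrac{1}{2t}\|h\|^2.
\end{equation*}

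Subtracting $\frac{1}{t}\langle x-p, h\rangle$ from both sides in both inequalities gives
\begin{equation*}
    \tfrac{1}{t}\langle p - p', h\rangle - \tfrac{1}{2t}\|h\|^2 \;\leq\; M_G^t(x+h) - M_G^t(x) - \tfrac{1}{t}\langle x-p, h\rangle \;\leq\; \tfrac{1}{2t}\|h\|^2.
\end{equation*}
By \Cref{lemma:prox_time_cont}, $\|p-p'\|\leq \frac{1}{1-\ModulusWeakConvexity t}\|h\|$, so Cauchy--Schwarz yields $|\langle p-p', h\rangle| \leq \frac{1}{1-\ModulusWeakConvexity t}\|h\|^2$. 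Consequently
\begin{equation*}
    \left| M_G^t(x+h) - M_G^t(x) - \tfrac{1}{t}\langle x-p, h\rangle \right| = O(\|h\|^2) = o(\|h\|),
\end{equation*}
which by definition means that $M_G^t$ is (Fréchet) differentiable at $x$ with gradient $\nabla M_G^t(x) = \frac{1}{t}(x-p) = \frac{1}{t}(x - \prox_{tG}(x))$.

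The main obstacle is really just the verification that $\prox_{tG}$ is single-valued and Lipschitz for $t<1/\ModulusWeakConvexity$, which is already handled by the preceding lemma and \Cref{remark:ass}. Once that is available, the result follows from a clean squeeze argument that does not require any of the deeper technology from \cite{rockafellar2009variational} and holds without any convexity of $G$ itself, only the weak convexity assumption \ref{ass:g weakly convex}.
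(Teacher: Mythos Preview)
Your proof is correct and follows essentially the same sandwiching argument as the paper: bound $M_G^t(x+h)-M_G^t(x)$ from above using $p$ and from below using $p'$, then invoke the Lipschitz continuity of the proximal map from \Cref{lemma:prox_time_cont}. One harmless slip: the lower bound actually gives $\tfrac{1}{t}\langle p-p',h\rangle + \tfrac{1}{2t}\|h\|^2$ rather than $-\tfrac{1}{2t}\|h\|^2$, but since your stated inequality is weaker it remains true and the $o(\|h\|)$ conclusion is unaffected.
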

\begin{corollary}%
    \label{lemma:moreau_nabla_lipschitz}
    For $t\in (0,\frac{1}{\ModulusWeakConvexity})$, $\nabla M^t_G$ is Lipschitz continuous with Lipschitz constant $\frac{2-\ModulusWeakConvexity t}{t(1-\ModulusWeakConvexity t)}$.
\end{corollary}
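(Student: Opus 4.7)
The plan is to combine the two immediately preceding results—\Cref{lemma:Moreau_diffble_small_t}, which gives the explicit formula $\nabla M_G^t(x) = \tfrac{1}{t}\bigl( x - \prox_{tG}(x) \bigr)$, and \Cref{lemma:prox_time_cont}, which provides the Lipschitz constant $\tfrac{1}{1-\ModulusWeakConvexity t}$ for $x \mapsto \prox_{tG}(x)$ in the regime $t \in (0, 1/\ModulusWeakConvexity)$—via a straightforward triangle inequality.

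Concretely, I would fix $t \in (0, 1/\ModulusWeakConvexity)$ and arbitrary $x, y \in \R^d$ and write
\begin{equation*}
    \nabla M_G^t(x) - \nabla M_G^t(y) = \tfrac{1}{t}\bigl[ (x - y) - \bigl( \prox_{tG}(x) - \prox_{tG}(y) \bigr) \bigr].
\end{equation*}
Applying the triangle inequality and then the Lipschitz estimate from \Cref{lemma:prox_time_cont} yields
\begin{equation*}
    \| \nabla M_G^t(x) - \nabla M_G^t(y) \| \leq \tfrac{1}{t}\Bigl( \| x - y \| + \tfrac{1}{1-\ModulusWeakConvexity t} \| x - y \| \Bigr) = \tfrac{2 - \ModulusWeakConvexity t}{t(1-\ModulusWeakConvexity t)} \| x - y \|,
\end{equation*}
which is the advertised constant.

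There is essentially no obstacle here: the result is a direct corollary of the two preceding lemmas, which is why it is stated as a corollary rather than a lemma. The only thing to verify is the algebraic simplification $1 + \tfrac{1}{1-\ModulusWeakConvexity t} = \tfrac{2-\ModulusWeakConvexity t}{1-\ModulusWeakConvexity t}$, which is immediate from combining the fractions over the common denominator $1-\ModulusWeakConvexity t$. Note that the estimate is valid precisely because $t < 1/\ModulusWeakConvexity$ guarantees the denominator is strictly positive.
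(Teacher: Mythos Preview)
Your proof is correct and follows exactly the same approach as the paper, which simply states that the result follows from the formula $\nabla M^t_G(x) = \tfrac{1}{t}\bigl(x-\prox_{tG}(x)\bigr)$ together with Lipschitz continuity of the proximal map. You have merely spelled out the triangle-inequality computation and the algebraic simplification that the paper leaves implicit.
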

\begin{proof}
    Since $\nabla M^t_G(x) = \frac{1}{t}\bigl(x-\prox_{tG}(x)\bigr)$ the result follows from Lipschitz continuity of the proximal map.
\end{proof}
When $t\geq \frac{1}{\ModulusWeakConvexity}$, regularity properties of the \MoreauName{} envelope do not follow as directly.
However, we can obtain guarantees by using convexity of $G$ outside the ball $\Ball{\ConvexityRadius}{0}$.
To proceed, we require that $\prox_{tG}(x)$ is outside of $\Ball{\ConvexityRadius}{0}$ for sufficiently large \( x \in \R^d \), which the following lemma asserts.
\begin{lemma}%
    \label{lemma:prox_coercive}
    The proximal map is coercive in the sense that for all \( t > 0 \) it holds that
    \begin{equation}\label{eq:prox_coercive}
        \lim_{R\rightarrow\infty}\;\inf_{x \in \BallC{R}{0}}\; \inf_{p\in \prox_{tG}(x)}\|p\|=\infty.
    \end{equation}
\end{lemma}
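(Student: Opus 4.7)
The plan is to argue by contradiction, using the first-order optimality condition for the proximal minimization together with the local boundedness of $\partial G$ established in Remark \ref{remark:ass}(1).

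First, I would derive the optimality condition. If $p \in \prox_{tG}(x)$, then $p$ is a (global) minimizer of $y \mapsto G(y) + \frac{1}{2t}\|x-y\|^2$. Since the quadratic term is smooth, the regular subdifferential sum rule (see \cite[8.8 Exercise]{rockafellar2009variational}) yields
\[
    0 \in \partial G(p) + \tfrac{1}{t}(p - x), \qquad \text{i.e.,} \qquad \tfrac{1}{t}(x - p) \in \partial G(p).
\]

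Next, I would assume for contradiction that the claim fails. Then there exist $M < \infty$ and sequences $(x_n)_n \subset \R^d$ with $\|x_n\| \to \infty$ and $p_n \in \prox_{tG}(x_n)$ with $\|p_n\| \leq M$ for all $n$. By Remark \ref{remark:ass}(1), the weak convexity of $G$ together with \cite[9.14 Example]{rockafellar2009variational} implies that $\partial G$ is locally bounded, so there is $C_M > 0$ with $\|v\| \leq C_M$ for every $v \in \partial G(q)$ with $\|q\| \leq M$. Applied to $v_n \coloneqq \tfrac{1}{t}(x_n - p_n) \in \partial G(p_n)$, this gives $\|x_n - p_n\| \leq t\, C_M$, whence $\|x_n\| \leq M + t\, C_M$, contradicting $\|x_n\| \to \infty$.

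The main technical point is ensuring the subdifferential sum rule applies in the nonconvex setting, which is immediate because the quadratic $\frac{1}{2t}\|x - \cdot\|^2$ is $C^1$; after that, the argument is a direct coercivity estimate driven purely by local boundedness of $\partial G$. No assumption on $t$ relative to $\ModulusWeakConvexity^{-1}$ is needed, consistent with the statement covering all $t > 0$ (even though $\prox_{tG}$ may be multi-valued for large $t$, the conclusion is formulated as an infimum over all selections $p$).
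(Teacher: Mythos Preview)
Your proof is correct and follows essentially the same approach as the paper: both use the first-order optimality condition $\tfrac{1}{t}(x-p)\in\partial G(p)$ together with the local boundedness of $\partial G$ from \Cref{remark:ass} to derive a contradiction from an assumed bounded sequence of proximal points. The only cosmetic difference is that the paper first argues the inner infimum is attained and works with the minimum-norm selection, whereas you take an arbitrary bounded selection directly; this does not affect the argument.
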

\begin{corollary}%
    \label{Moreau_diffble_outside}
    For any $t>0$, there exists $R>0$ such that $M^t_G$ is differentiable and $\prox_{tG}$ is single-valued and 1-Lipschitz on $\BallC{R}{0}$.
    That is,
    \[
        \|\prox_{tG}(x)-\prox_{tG}(y)\|\leq \|x-y\|,\quad x,y \in \BallC{R}{0}.
    \]
    In addition, $\|\nabla M^t_G(x)-\nabla M^t_G(y)\|\leq\frac{1}{t}\|x-y\|$ for $x,y\in \BallC{R}{0}$.
    
\end{corollary}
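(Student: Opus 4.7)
The plan is to use \Cref{lemma:prox_coercive} to reduce the problem to the convex region, where standard monotone-operator arguments yield the desired regularity. First, by coercivity I would choose $R > 0$ large enough so that for every $x \in \BallC{R}{0}$, every element of $\prox_{tG}(x)$ lies in $\BallC{\ConvexityRadius}{0}$. This places the prox points in the region where, by \Cref{ass:convex outside ball}, $G$ behaves convexly and (by item 4 of \Cref{remark:ass}) its subgradients are monotone. The first-order optimality of the proximal subproblem together with the existence of regular subgradients (item 1 of \Cref{remark:ass}) yields the inclusion $\frac{x - p}{t} \in \partial G(p)$ whenever $p \in \prox_{tG}(x)$, which is the workhorse of what follows.

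Second, for $x, y \in \BallC{R}{0}$ with $p \in \prox_{tG}(x)$ and $q \in \prox_{tG}(y)$, the monotonicity of $\partial G$ on $\BallC{\ConvexityRadius}{0}$ applied to these optimality inclusions yields
\[
    \|p - q\|^2 \leq \langle x - y,\, p - q\rangle.
\]
Setting $y = x$ and taking $q = p'$ to be any second prox element at $x$ immediately forces $p = p'$, hence single-valuedness on $\BallC{R}{0}$; for general $x \neq y$, Cauchy--Schwarz then gives the 1-Lipschitz bound $\|\prox_{tG}(x) - \prox_{tG}(y)\| \leq \|x - y\|$.

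Third, once the proximal map is single-valued and Lipschitz continuous on the open set $\BallC{R}{0}$, differentiability of $M^t_G$ with $\nabla M^t_G(x) = \tfrac{1}{t}(x - \prox_{tG}(x))$ follows from an envelope-theorem-style sandwich: using $\prox_{tG}(x)$ as a candidate minimizer at $x + h$ gives a first-order upper bound on $M^t_G(x+h) - M^t_G(x)$, while using $\prox_{tG}(x+h)$ as a candidate at $x$ gives the matching lower bound, with the gap between the two controlled by the Lipschitz continuity of the prox map. Finally, to get the sharp Lipschitz constant $\tfrac{1}{t}$ for $\nabla M^t_G$, I would expand
\[
    \|(x - y) - (\prox_{tG}(x) - \prox_{tG}(y))\|^2 = \|x-y\|^2 - 2\langle x-y,\,\prox_{tG}(x) - \prox_{tG}(y)\rangle + \|\prox_{tG}(x) - \prox_{tG}(y)\|^2
\]
and invoke the displayed monotonicity inequality to cancel the cross term against the square term, leaving only $\|x-y\|^2$. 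The main obstacle is really the initial step: ensuring that \emph{all} prox points (not merely some) escape $\Ball{\ConvexityRadius}{0}$ uniformly for $x \in \BallC{R}{0}$, which is precisely the form in which \Cref{lemma:prox_coercive} is stated; everything downstream is a routine combination of monotonicity and the envelope argument.
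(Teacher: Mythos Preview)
Your proposal is correct and follows essentially the same route as the paper: use \Cref{lemma:prox_coercive} to force all prox points into $\BallC{\ConvexityRadius}{0}$, invoke first-order monotonicity of $\partial G$ there to get $\|p-q\|^2 \leq \langle x-y, p-q\rangle$ (yielding single-valuedness and the 1-Lipschitz bound), deduce differentiability via the sandwich argument of \Cref{lemma:Moreau_diffble_small_t}, and then bound $\|\nabla M^t_G(x)-\nabla M^t_G(y)\|$ by the same monotonicity inequality. The only cosmetic differences are that the paper obtains single-valuedness from strict convexity of the proximal objective rather than from the $y=x$ case of the monotonicity inequality, and it bounds $\|q_x-q_y\|^2$ via $\langle x-y,q_x-q_y\rangle - \langle p(x)-p(y),q_x-q_y\rangle$ rather than by expanding the square; both computations are equivalent.
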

\begin{corollary}%
    \label{cor:moreau_convex_outside}
    The \MoreauName{} envelope $M^t_G$ is convex outside a ball. That is, there exists $R>0$ such that for $x,y\not\in B_R(0)$ it holds that
    \[
        M^t_G(\lambda x + (1-\lambda)y)\leq \lambda M^t_G(x) + (1-\lambda)M^t_G(y).
    \]
\end{corollary}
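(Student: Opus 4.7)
The plan is to pick the radius $R$ large enough that for any $x \in \BallC{R}{0}$ every element of $\prox_{tG}(x)$ lies in $\BallC{\ConvexityRadius}{0}$, and then simply test the Moreau envelope at $x_\lambda := \lambda x + (1-\lambda) y$ with the convex combination of proximal points. This reduces the proof to two elementary convexity estimates: one for $G$ (which is convex outside $\Ball{\ConvexityRadius}{0}$ by \Cref{ass:convex outside ball}) and one for the squared norm.

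Concretely, I would first invoke \Cref{lemma:prox_coercive} to obtain $R > 0$ with the property that every $p \in \prox_{tG}(x)$ satisfies $p \in \BallC{\ConvexityRadius}{0}$ whenever $x \in \BallC{R}{0}$. For arbitrary $x, y \in \BallC{R}{0}$ and $\lambda \in [0,1]$, I would pick (any) $p \in \prox_{tG}(x)$ and $q \in \prox_{tG}(y)$, set $p_\lambda := \lambda p + (1-\lambda) q$ and $x_\lambda := \lambda x + (1-\lambda) y$, and use $p_\lambda$ as a feasible point in the infimum defining $M^t_G(x_\lambda)$:
\[
    M^t_G(x_\lambda) \leq G(p_\lambda) + \tfrac{1}{2t}\|x_\lambda - p_\lambda\|^2.
\]

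Next I would estimate each summand. Since $p, q \in \BallC{\ConvexityRadius}{0}$, \Cref{ass:convex outside ball} (as stated, this requires only that the endpoints, not the whole segment, lie outside $\Ball{\ConvexityRadius}{0}$) gives $G(p_\lambda) \leq \lambda G(p) + (1-\lambda) G(q)$. Writing $x_\lambda - p_\lambda = \lambda(x-p) + (1-\lambda)(y-q)$ and applying convexity of $\|\cdot\|^2$ gives $\|x_\lambda - p_\lambda\|^2 \leq \lambda \|x-p\|^2 + (1-\lambda)\|y-q\|^2$. Adding the two bounds and using that $p$ and $q$ are minimizers, so that $M^t_G(x) = G(p) + \tfrac{1}{2t}\|x-p\|^2$ and $M^t_G(y) = G(q) + \tfrac{1}{2t}\|y-q\|^2$, yields
\[
    M^t_G(x_\lambda) \leq \lambda M^t_G(x) + (1-\lambda) M^t_G(y),
\]
which is the claim.

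The only delicate step is guaranteeing that $p$ and $q$ sit outside $\Ball{\ConvexityRadius}{0}$, which is exactly what \Cref{lemma:prox_coercive} supplies. Notably, the argument never uses differentiability of $M^t_G$ and makes no distinction between single- and multi-valued proximal maps, so it applies uniformly for all $t > 0$ including the regime $t \geq 1/\ModulusWeakConvexity$ where $\prox_{tG}$ may fail to be single-valued.
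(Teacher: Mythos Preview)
Your proof is correct and essentially identical to the paper's own argument: both choose $R$ large enough (via \Cref{lemma:prox_coercive}) so that proximal points lie outside $\Ball{\ConvexityRadius}{0}$, test the infimum at the convex combination of proximal points, and conclude using convexity of $G$ outside a ball together with convexity of the squared norm. Your closing remark about not needing single-valuedness of the proximal map is a nice observation that the paper does not make explicit.
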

Within the proof of convergence of \gls{ula} to the target density, we will make use of the following growth property \cite[Section 3]{durmus2017nonasymptotic}.
\begin{definition}[Superexponential]%
    \label{def:superexponential}
    A function $H:\R^d\rightarrow \R$ is \emph{superexponential} if there exist a minimizer $x^* \in \R^d$ of $H$ and $\rho,M_\rho>0$ such that for any $x \in \BallC{M_\rho}{x^\ast}$ and \( v\in\partial H(x) \) it holds that
    \begin{equation}\label{eq:superexp}
        \langle v,x-x^*\rangle\geq \rho\|x-x^*\|^2.
    \end{equation}
    We refer to $M_\rho$ and $\rho$ as the \emph{supex radius} and \emph{supex modulus} of $H$, respectively.
\end{definition}    
\begin{lemma}%
    \label{lemma:superexp_M}
    The \MoreauName{} envelope preserves the superexponential property.
    More specifically, if $G$ is superexponential with radius $M_\rho$ and modulus $\rho$, then there exists $M_\rho'>0$ such that for all \( x \in \BallC{M_\rho^\prime}{x^\ast} \) 
    \begin{equation}\label{eq:super_exp_moreau}
        \langle \nabla M^t_G(x),x-x^*\rangle\geq \min \Bigl(\frac{\rho}{4},\frac{1}{2t}\Bigr)\|x-x^*\|^2.
    \end{equation}
\end{lemma}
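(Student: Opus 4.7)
The plan is to reduce the superexponential bound for $M_G^t$ at a point $x$ (far from $x^\ast$) to the superexponential bound for $G$ at the proximal point $p = \prox_{tG}(x)$. The key observation is that by Fermat's rule at any $p \in \prox_{tG}(x)$ we have $\tfrac{1}{t}(x-p) \in \partial G(p)$; combined with the identity $\nabla M_G^t(x) = \tfrac{1}{t}(x-p)$ from~\Cref{lemma:Moreau_diffble_small_t} and~\Cref{Moreau_diffble_outside}, this gives a direct channel from the assumed superexponentiality of $G$ to a bound on $\nabla M_G^t$.

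The first step is to choose $M_\rho'$ appropriately. \Cref{lemma:prox_coercive} asserts that $\|p\| \to \infty$ as $\|x\| \to \infty$ uniformly over selections of the proximal map, so for the given $t$ there exists $M_\rho'$ large enough that $\|x - x^\ast\| \geq M_\rho'$ implies (i) $\|p - x^\ast\| \geq M_\rho$ (so that the superexponential inequality~\eqref{eq:superexp} for $G$ applies at $p$), and (ii) $M_G^t$ is differentiable at $x$ with the single-valued gradient formula above (by~\Cref{Moreau_diffble_outside}).

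Having fixed $x$ with $\|x - x^\ast\| \geq M_\rho'$, the calculation is short. Writing $x - x^\ast = (x-p) + (p - x^\ast)$ and using $\tfrac{1}{t}(x-p) \in \partial G(p)$ together with~\eqref{eq:superexp} at $p$ yields
\begin{equation*}
\langle \nabla M_G^t(x), x - x^\ast \rangle
= \tfrac{1}{t}\|x-p\|^2 + \tfrac{1}{t}\langle x-p, p-x^\ast\rangle
\geq \tfrac{1}{t}\|x-p\|^2 + \rho\|p-x^\ast\|^2.
\end{equation*}
To convert this to a bound in $\|x-x^\ast\|^2$, I use the elementary inequality $\|x-x^\ast\|^2 \leq 2\|x-p\|^2 + 2\|p-x^\ast\|^2$ together with $\tfrac{1}{t}\|x-p\|^2 + \rho\|p-x^\ast\|^2 \geq \min(\tfrac{1}{t}, \rho)(\|x-p\|^2 + \|p-x^\ast\|^2)$, which produces a factor of $\tfrac{1}{2}\min(\tfrac{1}{t}, \rho) = \min(\tfrac{1}{2t}, \tfrac{\rho}{2})$, comfortably stronger than the claimed $\min(\tfrac{1}{2t}, \tfrac{\rho}{4})$.

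The main obstacle is the first step, i.e., arranging that the proximal point $p$ lies in the region where the superexponential property of $G$ is available; once that is in place via~\Cref{lemma:prox_coercive} and differentiability via~\Cref{Moreau_diffble_outside}, the rest of the argument is a one-line convex combination. A minor subtlety is that for $t \geq 1/\rho_G$ the proximal map may be multi-valued, but the inclusion $\tfrac{1}{t}(x-p) \in \partial G(p)$ still holds for every selection, so picking $M_\rho'$ large enough to ensure differentiability of $M_G^t$ at $x$ (hence a canonical choice of $p$) resolves this without affecting the bound.
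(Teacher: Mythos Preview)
Your argument is correct and in fact yields the sharper constant $\min\bigl(\tfrac{1}{2t},\tfrac{\rho}{2}\bigr)$ rather than the paper's $\min\bigl(\tfrac{1}{2t},\tfrac{\rho}{4}\bigr)$. The key difference from the paper's proof is structural: the paper performs a case distinction on whether $\|p-x^\ast\|\leq \tfrac{1}{2}\|x-x^\ast\|$ (which gives the $\tfrac{1}{2t}$ branch directly from $\nabla M_G^t(x)=\tfrac{1}{t}(x-p)$) or $\|p-x^\ast\|>\tfrac{1}{2}\|x-x^\ast\|\geq M_\rho$ (which allows invoking superexponentiality of $G$ at $p$ and yields the $\tfrac{\rho}{4}$ branch). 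In particular, the paper does \emph{not} appeal to \Cref{lemma:prox_coercive} here; it arranges $\|p-x^\ast\|\geq M_\rho$ in the second case simply by taking $M_\rho'\geq 2M_\rho$. Your route instead invokes \Cref{lemma:prox_coercive} up front so that superexponentiality of $G$ is always available at $p$, then combines the two terms $\tfrac{1}{t}\|x-p\|^2+\rho\|p-x^\ast\|^2$ via $\|x-x^\ast\|^2\leq 2\|x-p\|^2+2\|p-x^\ast\|^2$, avoiding any case split. Both proofs rest on the same identity $\nabla M_G^t(x)=\tfrac{1}{t}(x-p)\in\partial G(p)$; your version is a bit more economical and gives the tighter constant, while the paper's case analysis makes the geometric source of each branch of the minimum explicit.
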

The proof is given in \Cref{sec:appendix_superexp}.
We are now in the position to prove the main results. The first results concerns ergodicity of the \gls{ula} chain, that is, existence of a unique stationary distribution and convergence of the chain to said distribution.
\begin{theorem}[Ergodicity for arbitrary $t>0$]\label{prop:ergodic1}
    Let \Cref{ass} be satisfied and assume that $G$ or $F$ is superexponential.
    Then, for a small enough $\tau>0$, the chain \eqref{eq:ULA_subroutine} is geometrically ergodic.
    That is, there exists a stationary measure $\pi^t_\tau$ and $\lambda\in (0,1)$ such that for any $x_0\in\R^d$ there exists $C>0$ with
    \[
        \|\delta_{x_0}R^k_\tau - \pi^t_\tau\|_{\mathrm{TV}}\leq C\lambda^k
    \]
    for \( k \in \N \).
\end{theorem}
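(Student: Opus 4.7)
The plan is to apply the Meyn--Tweedie/Harris framework: establish a geometric Foster--Lyapunov drift condition together with a one-step minorization on a sublevel set of the Lyapunov function, which together yield geometric ergodicity in the $V$-weighted total variation norm and in particular in the total variation distance as stated. I take $V(x) = 1 + \|x-x^*\|^2$, where $x^*$ is the reference point of \Cref{def:superexponential}. Throughout, I fix a measurable selection $p(x) \in \prox_{tG}(x)$ (which exists since $\prox_{tG}$ is nonempty, closed-valued, and upper semicontinuous by \Cref{remark:ass}) and write $g^t(x) \coloneqq \tfrac{1}{t}(x - p(x))$, so the iteration reads $X_{k+1} = x - \tau(\nabla F(x) + g^t(x)) + \sqrt{2\tau}\, Z_k$.

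Computing the one-step conditional expectation gives
\[
    \E[V(X_{k+1}) \mid X_k = x] = V(x) - 2\tau\langle x - x^*,\, \nabla F(x) + g^t(x)\rangle + \tau^2\|\nabla F(x) + g^t(x)\|^2 + 2\tau d.
\]
Two estimates drive the drift. The linear-growth bound $\|\nabla F(x) + g^t(x)\|^2 \leq A(1 + \|x-x^*\|^2)$ follows from Lipschitz continuity of $\nabla F$, the $1$-Lipschitz bound on $\prox_{tG}$ outside a large ball (\Cref{Moreau_diffble_outside}), and the local boundedness of $\partial G$ (see \Cref{remark:ass}), which together control $\|p(x)\|$ by a linear function of $\|x\|$. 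The coercivity bound $\langle x-x^*, \nabla F(x) + g^t(x)\rangle \geq c\|x-x^*\|^2$ for large $\|x-x^*\|$ splits into two cases: if $G$ is superexponential, it follows directly from \Cref{lemma:superexp_M} applied to a selection $p$ which agrees with $\nabla M_G^t$ outside a ball (whose well-definedness is guaranteed by \Cref{Moreau_diffble_outside}), with the $\nabla F$ contribution absorbable as a lower-order perturbation via Young's inequality; if $F$ is superexponential, the linear growth of $g^t$ ensures that $g^t$ does not destroy the quadratic coercivity of $\nabla F$. Choosing $\tau$ small enough and $R$ large, these combine into the geometric drift
\[
    \E[V(X_{k+1}) \mid X_k = x] \leq \gamma V(x) + K\mathbf{1}_{\Ball{R}{x^*}}(x)
\]
for some $\gamma \in (0,1)$ and $K < \infty$.

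Minorization on $\Ball{R}{x^*}$ is immediate from the Gaussian noise: the kernel $R_\tau(x,\cdot)$ admits the transition density $r_\tau(x,y) = (4\pi\tau)^{-d/2}\exp\bigl(-\tfrac{1}{4\tau}\|y - x + \tau(\nabla F(x) + g^t(x))\|^2\bigr)$, which is bounded below by a positive constant uniformly in $x \in \Ball{R}{x^*}$ and $y$ in any fixed compact set, since $\nabla F$ and $g^t$ are bounded on $\Ball{R}{x^*}$. Thus $R_\tau(x,\cdot) \geq \eta\nu(\cdot)$ for some probability measure $\nu$ and all $x \in \Ball{R}{x^*}$, and Harris's theorem applied to the drift-and-minorization pair yields the claimed geometric ergodicity.

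The main obstacle is the regime $t \geq 1/\ModulusWeakConvexity$, where $M_G^t$ may fail to be globally differentiable and $\prox_{tG}$ may be multi-valued, so neither the linear-growth bound on $g^t$ nor the coercivity estimate comes for free; measurability of the selection $p$ also requires care. This is where the convexity-outside-a-ball assumption (\Cref{ass:convex outside ball}) earns its keep: through \Cref{Moreau_diffble_outside,cor:moreau_convex_outside,lemma:superexp_M}, it restores the needed regularity and superexponential inheritance of $U^t$ on $\BallC{R}{0}$ for $R$ large, while any interior irregularity is absorbed into the region on which minorization is established.
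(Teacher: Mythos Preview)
Your overall architecture---Foster--Lyapunov drift with $V(x)=1+\|x-x^*\|^2$ together with Gaussian minorization on a sublevel set, then Harris---is sound and is exactly what the paper invokes (via \cite[Theorem 5.3]{fruehwirth2024ergodicity}, which packages the same drift/minorization argument; the paper's Lyapunov function is $\exp(\|x-x^*_U\|)$ rather than quadratic, but either works once the superexponential property of $U^t$ is in hand).

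There is, however, a genuine gap in your coercivity step. You claim that when $G$ is superexponential the $\nabla F$ contribution is ``absorbable as a lower-order perturbation via Young's inequality'', and symmetrically that when $F$ is superexponential ``the linear growth of $g^t$ ensures that $g^t$ does not destroy the quadratic coercivity''. Neither assertion is justified by the stated tools. Lipschitz continuity of $\nabla F$ (respectively, linear growth of $g^t$) gives only $|\langle x-x^*,\nabla F(x)\rangle|\leq L_{\nabla F}\|x-x^*\|^2+O(\|x-x^*\|)$, which is of the \emph{same} order as the coercivity term, and Young's inequality does not improve this. Since the superexponential modulus you obtain from \Cref{lemma:superexp_M} is $\min(\rho/4,1/(2t))$, for large $t$ it can be made arbitrarily small relative to $L_{\nabla F}$, so the negative quadratic contribution from $\nabla F$ can in principle dominate.

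What rescues the estimate---and what the paper actually uses---is the \emph{convexity outside a ball} of the non-superexponential summand (\Cref{ass:convex outside ball}, and for $M^t_G$ its consequence \Cref{cor:moreau_convex_outside}). The paper writes, for $\|x\|$ large,
\[
\langle \nabla F(x),x-x^*_U\rangle \;=\; \underbrace{\Bigl\langle \nabla F(x)-\nabla F\Bigl(M_\rho'\tfrac{x}{\|x\|}\Bigr),\,x-M_\rho'\tfrac{x}{\|x\|}\Bigr\rangle}_{\geq 0\text{ by convexity outside }B_M(0)} \;+\;\text{(terms bounded by }c_1\|x\|+c_2\text{)},
\]
which shows $\langle \nabla F(x),x-x^*_U\rangle\geq -c_1\|x\|-c_2$, a genuinely lower-order bound that can be absorbed into $\rho\|x-x^*_U\|^2$ for $\|x\|$ large. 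The same trick handles $\nabla M^t_G$ when $F$ is the superexponential term. You mention convexity outside a ball in your closing paragraph as providing regularity of $M^t_G$ for large $t$, but you do not deploy it where it is actually needed---namely, to control the sign of the inner product with the non-superexponential part of the drift. Insert this argument (for both $F$ and $M^t_G$, symmetrically) in place of the Young-inequality claim and your proof goes through.
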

\begin{proof}
    The result follows as in \cite[Theorem 5.3]{fruehwirth2024ergodicity} if we can prove that the potential $U^t$ is superexponential.
    Therefore, we now show that $F$ or $G$ being superexponential implies that $U^t$ is superexponential with appropriate radius and modulus.
    When $G$ is superexponential, \Cref{lemma:superexp_M} implies that $M^t_G$ is superexponential with appropriate radius and modulus which we denote as $M_\rho$ and $\rho$.
    Using convexity of $U^t$ outside a ball \Cref{lemma:U_linear_growth} shows that $U^t$ grows asymptotically at least linearly.
    Thus $U^t$ is coercive, bounded from below, and continuous and, consequently, admits a minimizer.
    Let $x^*_G$ be a minimizer of $G$ (and, consequently, $M^t_G$) and $x^*_U$ be a minimizer of $U^t$.
    Moreover, recall that $\ConvexityRadius$ denotes the radius of convexity of $F$ and $G$.
    Pick $M_\rho'>\max(M_\rho,\ConvexityRadius)$ such that \( x \in \BallC{M_\rho^\prime}{x_U^*} \) implies that $\|x\|>\ConvexityRadius$ and, consequently, differentiability and convexity of $M^t_G$ in a neighborhood around $x$.
    Consequently, for \( x \in \BallC{M_\rho^\prime}{x_U^*} \) it follows that
        \begin{equation}\label{eq:growth_Ut}
            \begin{aligned}
                \left\langle \nabla U^t(x), x-x_U^* \right\rangle &= \underbrace{\biggl\langle \nabla F(x)-\nabla F\biggl(M_\rho'\frac{x}{\|x\|}\biggr), x - M_\rho'\frac{x}{\|x\|} \biggr\rangle}_{\geq 0\text{ by convexity outside $\Ball{\ConvexityRadius}{0}$}}\\
                &\qquad+\biggl \langle \nabla F\biggl(M_\rho'\frac{x}{\|x\|}\biggr), x - M_\rho'\frac{x}{\|x\|} \biggl\rangle \\
                &\qquad +\biggl\langle \nabla F(x),M_\rho'\frac{x}{\|x\|}-x^*_U\biggr\rangle + \langle \nabla M^t_G(x), x-x_U^*\rangle\\
                &\geq \langle \nabla M^t_G(x), x-x_U^*\rangle - c_1\|x\| - c_2\\
                &\geq \langle \nabla M^t_G(x), x-x_G^*\rangle + \langle \nabla M^t_G(x), x_G^*-x_U^*\rangle - c_1\|x\| - c_2\\
                &\geq \rho \|x - x_U^*\|^2 - \tilde{c}_1\|x\| - \tilde{c}_2\\
            \end{aligned}
        \end{equation}
        where we used Lipschitz continuity of $\nabla F$ and $\nabla M^t_G$ to obtain the linear bounds with appropriate constants $\tilde{c}_1, \tilde{c}_2>0$.
        Thus, $U^t$ is superexponential. In the case that instead $F$ is superexponential, we can use the convexity of $M^t_G$ outside a ball and apply the same arguments.
    As in \cite[Theorem 5.3]{fruehwirth2024ergodicity} this yields the existence of a probability distribution $\pi^t_\tau$, a constant $C>0$, and $\lambda\in (0,1)$ such that
    \[
        \|\delta_{x_0}R^k_\tau - \pi^t_\tau\|_V \leq C\lambda^k
    \]
    where 
    \begin{equation}\label{eq:f_norm}
        \|\nu\|_f\coloneqq \sup_{g: |g|\leq f}\Biggl|\int g(x)\;\dd\nu(x)\Biggr|.
    \end{equation}
    and $V(x)=\exp{(\|x-x_U^*\|)}$. In particular, since for any measurable set $A$ and $x\in\R^d$ it holds that $|1_A(x)| \leq V(x)$ we obtain that
    \[
        \|\delta_{x_0}R^k_\tau - \pi^t_\tau\|_{\TV} = 
        \sup_{A\in\Bc(\R^d)}\left| \int \1_A(x)\,\dd(\delta_{x_0}R^k_\tau - \pi^t_\tau)(x) \right|\leq \|\delta_{x_0}R^k_\tau - \pi^t_\tau\|_V,
    \]
    which proves the desired convergence in \gls{tv}.
\end{proof}

When $t < \frac{1}{\ModulusWeakConvexity}$ we obtain convergence of the stationary measure $\pi^t_\tau$ to the target $\pi^t$ when the step size \( \tau \) vanishes in addition to ergodicity.
\begin{theorem}[Convergence for $t<\frac{1}{\ModulusWeakConvexity}$]%
    \label{prop:erg_small_t}
    Assume that $t<\frac{1}{\ModulusWeakConvexity}$. Let \Cref{ass} be satisfied and assume that $G$ or $F$ is superexponential.
    Then, the stationary measure $\pi^t_\tau$ of the discrete time Markov chain \eqref{eq:ULA_subroutine} approaches $\pi^t$ as $\tau\rightarrow 0$ in \gls{tv}. More precisely, for any $\epsilon>0$ there exists $\bar{\tau}>0$ and $N>0$ such that for $\tau<\bar{\tau}$ and $\|\pi^t_\tau - \pi^t\|_{\TV}<\epsilon$ and if $k>N$, $\|\delta_{x_0}R^k_\tau - \pi^t\|_{\TV}<\epsilon$ and it holds $\bar{\tau} = \mathcal{O}(-\log^{-1}(\epsilon)\epsilon^2)$ and $N=\mathcal{O}(\log^{2}(\epsilon)\epsilon^{-2})$.
\end{theorem}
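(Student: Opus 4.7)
The plan is to reduce the statement to existing nonasymptotic convergence results for \gls{ula} applied to a smooth potential, combining them with the geometric ergodicity from \Cref{prop:ergodic1} via the triangle inequality
\[
\|\delta_{x_0}R^k_\tau-\pi^t\|_{\TV}\leq\|\delta_{x_0}R^k_\tau-\pi^t_\tau\|_{\TV}+\|\pi^t_\tau-\pi^t\|_{\TV}.
\]
For $t<1/\ModulusWeakConvexity$, \Cref{lemma:Moreau_diffble_small_t} and \Cref{lemma:moreau_nabla_lipschitz} ensure that $M^t_G$ is differentiable with Lipschitz continuous gradient, so $U^t=F+M^t_G$ is differentiable with Lipschitz continuous gradient of constant at most $L_F+\tfrac{2-\ModulusWeakConvexity t}{t(1-\ModulusWeakConvexity t)}$. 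Together with the superexponential property of $U^t$ established within the proof of \Cref{prop:ergodic1} and convexity of $U^t$ outside a ball (\Cref{ass:convex outside ball} and \Cref{cor:moreau_convex_outside}), the chain \eqref{eq:ULA_subroutine} becomes a standard \gls{em} discretization of a Langevin \gls{sde} whose smooth, dissipative drift $-\nabla U^t$ falls within the regime analyzed in \cite{durmus2017nonasymptotic,durmus2018efficient}.

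First, I would invoke a Durmus--Moulines-type nonasymptotic discretization bound to obtain a constant $C>0$ such that for all sufficiently small $\tau$,
\[
\|\pi^t_\tau-\pi^t\|_{\TV}\leq C\sqrt{\tau\,|\log\tau|}.
\]
The bound on $\bar\tau$ then follows immediately by solving $C\sqrt{\tau\,|\log\tau|}<\epsilon$ for $\tau$, which asymptotically gives $\bar\tau=\mathcal{O}(\epsilon^2/|\log\epsilon|)=\mathcal{O}(-\log^{-1}(\epsilon)\epsilon^2)$.

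Second, for the bound on $N$ I would pick $\tau<\bar\tau$ so that the discretization bias contributes at most $\epsilon/2$. \Cref{prop:ergodic1} then provides $\|\delta_{x_0}R^k_\tau-\pi^t_\tau\|_{\TV}\leq C'\lambda^k$ with some $\lambda\in(0,1)$ arising from a Foster--Lyapunov drift condition. Standard analysis of such conditions for Langevin-type chains yields $\log(1/\lambda)=\Theta(\tau)$ in the small-$\tau$ regime, so requiring $C'\lambda^k<\epsilon/2$ forces $k=\mathcal{O}(|\log\epsilon|/\tau)$. Substituting $\tau=\Theta(\epsilon^2/|\log\epsilon|)$ yields $N=\mathcal{O}(|\log\epsilon|^2/\epsilon^2)=\mathcal{O}(\log^2(\epsilon)\epsilon^{-2})$, as claimed.

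The main obstacle will be adapting the off-the-shelf discretization bound to the present setting, which requires verification of a Foster--Lyapunov drift condition with an exponential Lyapunov function of the form $V(x)=\exp(\|x-x^*_U\|)$ (which is already set up in the proof of \Cref{prop:ergodic1}), and a careful tracking of how the constants depend on $\tau$ to confirm $\log(1/\lambda)=\Theta(\tau)$. In particular, the hybrid bound $C_1e^{-ck\tau}+C_2\sqrt{\tau|\log\tau|}$ that one would typically extract from the literature must be shown to apply here with constants depending only on the Lipschitz constant and the supex parameters of $U^t$; once that is in place, the quantitative rates follow from the elementary computations above.
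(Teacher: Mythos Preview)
Your proposal identifies the right ingredients (Lipschitz gradient of $U^t$ from \Cref{lemma:moreau_nabla_lipschitz}, superexponentiality from the proof of \Cref{prop:ergodic1}, and a Durmus--Moulines discretization estimate), but the decomposition differs from the paper's in a way that creates a genuine technical gap.

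You split through the \emph{discrete} stationary measure,
\[
\|\delta_{x_0}R^k_\tau-\pi^t\|_{\TV}\leq\|\delta_{x_0}R^k_\tau-\pi^t_\tau\|_{\TV}+\|\pi^t_\tau-\pi^t\|_{\TV},
\]
which forces you to control the discrete geometric rate $\lambda$ as a function of $\tau$ and to assert $\log(1/\lambda)=\Theta(\tau)$. This is plausible but is \emph{not} supplied by \Cref{prop:ergodic1}, whose constants come from abstract Meyn--Tweedie-type arguments and are not tracked in $\tau$; establishing that scaling rigorously is nontrivial. Moreover, the off-the-shelf bound you want for $\|\pi^t_\tau-\pi^t\|_{\TV}$ is, in the nonconvex case, exactly \cite[Theorem~9]{durmus2017nonasymptotic}, which requires $U^t\in C^2$---a property the Moreau envelope does not generally enjoy (cf.\ the discussion in \cref{ssec:comparison}). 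So neither half of your triangle inequality is available as a black box.

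The paper instead splits through the \emph{continuous-time} semigroup $P_s$,
\[
\|\delta_{x_0}R^k_\tau-\pi^t\|_{\TV}\leq\|\delta_{x_0}R^k_\tau-\delta_{x_0}R^n_\tau P_{(k-n)\tau}\|_{\TV}+\|\delta_{x_0}R^n_\tau P_{(k-n)\tau}-\pi^t\|_{\TV}.
\]
The first term is the one-step strong error from \cite[Proposition~2]{durmus2017nonasymptotic} (which only needs Lipschitz $\nabla U^t$, not $C^2$), while the second uses exponential ergodicity of the diffusion (established via a drift condition with the \emph{quadratic} Lyapunov function $V(x)=\|x-x_U^*\|^2$, not the exponential one). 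Crucially, the diffusion's rate $\kappa$ is $\tau$-independent, so no tracking of discrete ergodicity constants is needed. The only remaining ingredient is a uniform-in-$\tau$ bound on the second moments of $(X_k)_k$, which the paper proves by hand from superexponentiality. The bound on $\|\pi^t_\tau-\pi^t\|_{\TV}$ then falls out by letting $k\to\infty$ with $k-n$ fixed, after which the claimed rates follow from balancing $C_1\sqrt{m}\tau$ against $C_2\kappa^{m\tau}$.
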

\begin{proof}
    The ergodicity of the discrete chain was already proved in \Cref{prop:ergodic1}.
    To prove that $\pi^t_\tau \rightarrow \pi^t$ as \( \tau \to 0 \), we follow the three-step strategy of \cite{durmus2017nonasymptotic}.
    In the first step, we establish exponential ergodicity of the corresponding continuous-time Langevin diffusion with stationary measure $\pi^t$.
    In the second step, we combine the exponential ergodicity with the approximation of the continuous time diffusion by the discrete chain to bound $\|\delta_{x_0}R^k_\tau - \pi^t\|_{\TV}$.
    This bound can be made small as $\tau\rightarrow 0$ if the second moments of the iterates of the discrete chain are bounded uniformly in $\tau$.
    Thus, in the third step we establish such a uniform bound.

    \textbf{Step 1: Ergodicity of the continuous time process.}
    The chain \eqref{eq:ULA_subroutine} is a discretization of the Langevin \gls{sde} on the potential $U^t$ defined via 
    \begin{equation}\label{eq:erg_SDE}
    \left\{
        \begin{aligned}
            X_0 &= x_0\\
            \dd X_s &= -\nabla U^t(X_s)\dd t + \sqrt{2}\dd W_s.
        \end{aligned}
        \right.
    \end{equation}
    The existence of a unique solution for all time is a standard result when $\nabla U^t$ is Lipschitz continuous, which is guaranteed for $t<\frac{1}{\ModulusWeakConvexity}$.
    As in \cite[Theorem 2.1]{roberts1996exponential} it follows that $(X_s)_{s>0}$ is nonexplosive, irreducible with respect to the Lebesgue measure, strong Feller and, as a consequence, all compact sets are small.
    The \gls{sde} defines a Markov semi-group via $P_s f(x) = \E[f(X_s)|X_0=x]$.
    By Ito's lemma, the generator of the semi-group is given by
    \[
        \Ac V (x) = \lim_{s\rightarrow +0}\frac{\E[V(X_s)|X_0=x] - V(x)}{s} = -\langle \nabla U^t(x), \nabla V(x)\rangle + \Delta V(x)
    \]
    for any twice continuously differentiable $V : \R^d \to \R$ (\cf \cite[Lemma 4.5]{fruehwirth2024ergodicity}).
    In particular, for $V(x)=\|x-x_U^*\|^2$ the generator reads
    \begin{equation}
        \Ac V(x) = -2\langle \nabla U^t(x), x-x_U^*\rangle + 2d.
    \end{equation}
    For any $x\in \BallC{M_\rho}{x_U^*}$, the generator satisfies the drift condition
    \begin{equation}
        -2\langle \nabla U^t(x), x-x_U^*\rangle + 2d \leq -2\rho\|x-x_U^*\|^2 + 2d\leq -2\rho V(x) + 2d
    \end{equation}
    since \( U^t \) is superexponential.
    By \cite[Theorem 6.1]{meyn1993stability}, the combination of the drift condition with the fact that all compact sets are small implies the existence of $B>0$, $\kappa\in(0,1)$, and a stationary measure (which has to be the target measure $\pi^t$ as shown in \cite[Theorem 3]{fruehwirth2024ergodicity}) such that
    \begin{equation}
        \|\delta_{x_0} P_s - \pi^t\|_{V+1}\leq (\|x_0-x_U^*\|^2+1) B\kappa^s.
    \end{equation}
    It follows for any $\mu$ with bounded second moment that
    \begin{equation}\label{eq:tv_conv}
        \begin{aligned}
            \|\mu P_s - \pi^t\|_{V+1} = &\sup_{g\leq V+1}\left| \int g(x) \dd (\mu P_s - \pi^t)(x)\right| \\
            =&\sup_{g\leq V+1}\left| \int \int g(x) \dd P_s(y,\cdot)(x) \dd \mu(y) - \int g(x) \dd \pi^t(x)\right|\\
            = &\sup_{g\leq V+1}\left| \int \int g(x) \dd ( P_s(y,\cdot)-\pi^t)(x) \dd \mu(y) \right|\\
            \leq &\sup_{g\leq V+1}\left| \int (\|y-x_U^*\|^2+1) B\kappa^t \dd \mu(y) \right|
            \leq \left(\int \|y-x_U^*\|^2\,\dd\mu(y) + 1\right) B\kappa^s.
        \end{aligned}
    \end{equation}
    As previously, since \( V + 1 \) dominates $|1_A|$ for arbitrary measurable sets $A$, the inequality  also holds for the TV norm.
    
    \textbf{Step 2: Estimating the error.}
    Let us define $C:\Pc_2(\R^d)\rightarrow \R$ as $C(\mu)=(\int \|x-x_U^*\|^2\, \dd\mu(x) + 1) B$, the \emph{constant} in \eqref{eq:tv_conv}.
    Proposition 2 in \cite{durmus2017nonasymptotic} states that for any $k, n \in \N$, $k > n > 0$
    \begin{equation}\label{eq:erdogic2}
        \begin{aligned}
            \|\delta_{x_0} R^k_\tau - \pi^t\|_{\mathrm{TV}} &\leq \|\delta_{x_0} R^k_\tau - \delta_{x_0} R^n_\tau P_{(k-n)\tau}\|_{\mathrm{TV}} + \|\delta_{x_0} R^n_\tau P_{(k-n)\tau} - \pi^t\|_{\mathrm{TV}}\\
            &\leq \frac{L_{\nabla U^t}}{\sqrt{2}}\sqrt{k-n}\left(\frac{\tau^3}{3}A({x_0},\tau) + d\tau^2\right)^{1/2} + C(\delta_{x_0} R_\tau^n)\kappa^{(k-n)\tau}
        \end{aligned}
    \end{equation}
    where $A({x_0},\tau)=\sup_{k\geq 0}\int\|\nabla U^t(y)\|^2 \,\dd(\delta_{x_0} R_\tau^k)(y)$. The bound on the first term results from the discretization \eqref{eq:ULA_subroutine} of the \gls{sde} and the bound on the second term from exponential ergodicity of the continuous-time diffusion. Thus, it remains to bound the constants $A({x_0},\tau)$ and $C(\delta_{x_0} R_\tau^n)$ where it is crucial that the bounds are independent of $\tau$. By their respective definitions and Lipschitz continuity of $\nabla U^t$, it is sufficient to bound the second moments of the Markov chain $(X_k)_k$.
    
    \textbf{Step 3: Bounding the second moments of $(X_k)_k$.}
    The proof is adapted from \cite{fruehwirth2024ergodicity} and provided for the sake of completeness.
    Since \( U ^t \) is superexponential, there exists $M_\rho, \rho>0$ such that for all \( x \in \R^d \) such that $\|x-x_U^*\|\geq M_\rho$ it holds that
    \begin{equation}\label{eq:bdd_sec_mom}
        \begin{aligned}
            \|x-\tau \nabla U^t(x)-x_U^*\|^2 &\leq \|x-x_U^*\|^2 -2\tau \rho\|x-x_U^*\|^2 + \tau^2(L_{\nabla U^t}\|x-x_U^*\|)^2\\
            &\leq \|x-x_U^*\|^2\bigl(1-\tau (2\rho - \tau L_{\nabla U^t}^2)\bigr)
        \end{aligned}
    \end{equation}
    Thus, if $\tau\leq \frac{\rho}{L_{\nabla U^t}^2}$, we find that $\|x-\tau \nabla U^t(x)-x_U^*\|^2 \leq \|x-x_U^*\|^2(1-\tau\rho)$.
    Further, for \( x \in \R^d \) with $\|x-x_U^*\|< M_\rho$, the Lipschitz continuity of $\nabla U^t$ implies that
    \begin{equation}
        \begin{aligned}
            \|x-\tau \nabla U^t(x)-x_U^*\| &\leq \|x-x_U^*\| + L_{\nabla U^t} \tau \|x-x_U^*\|\\
            &\leq (1+L_{\nabla U^t} \tau)M_\rho \eqqcolon M_\tau.
        \end{aligned}
    \end{equation}
    For the iterates of our chain, this yields (by induction)
    \begin{equation*}
        \begin{aligned}
            \|X_k-x_U^*\|^2&\leq \1_{\{\|X_n-x_U^*\|>R,\; n=0,\dots,k-1\}}(1-\rho\tau)^k\|X_0-x_U^*\|^2\\
            &\qquad+ \sum\limits_{n=0}^{k-1} \left\{M_\tau^2\1_{\|X_n-x_U^*\|\leq R}\prod\limits_{\ell=n+1}^{k-1}\1_{\|X_\ell-x_U^*\|> R} \right.+(1-\rho\tau)^n\Bigl[ \\
            &\qquad 2\tau \|W_{k-1-n}\|^2 + 2\langle X_{k-1-n}^\tau-\tau\partial U(X_{k-1-n}^\tau)-x_U^*,\sqrt{2\tau}W_{k-1-n}\rangle\Bigr]\Biggr\}.
        \end{aligned}
    \end{equation*}
    Taking the expectation and noting that $W_{k-1-n}$ and $X_{k-1-n}$ are independent, it follows that
    \begin{equation*}
        \begin{aligned}
            \E\bigl[\|X_k-x_U^*\|^2\bigr]&\leq (1-\rho\tau)^k\E\bigl[\|X_0-x_U^*\|^2\bigr] + M_\tau^2\E \underbrace{\left[\sum\limits_{n=0}^{k-1}\1_{\|X_n-x_U^*\|\leq R}\prod\limits_{\ell=n+1}^{k-1}\1_{\|X_\ell-x_U^*\|> R}\right]}_{\leq 1} \\
            &\qquad+\sum\limits_{n=0}^{k-1} (1-\rho\tau)^n 2\tau \E\bigl[\|W_{k-1-n}\|^2\bigr]\\
            &\leq (1-\rho\tau)^k\E\bigl[\|X_0-x_U^*\|^2\bigr] + M_\tau^2+\sum\limits_{n=0}^{k-1} (1-\rho\tau)^n 2\tau\\
            &\leq (1-\rho\tau)^k\E\bigl[\|X_0-x_U^*\|^2\bigr] + M_\tau^2+\frac{2}{\rho},
        \end{aligned}
    \end{equation*}
    which is bounded as long as $\tau$ is bounded.
    Thus, the second moments of $(X_k)_k$ are bounded uniformly with respect to $\tau$.
    As a consequence for any $x_0$, $\sup_{\tau} A(x_0,\tau)<\infty$ and $\sup_n C(\delta_{x_0} R_\tau^n)<\infty$.
    Now fix $k-n=m$ in \eqref{eq:erdogic2} and let $k\rightarrow\infty$.
    It follows that $\|\pi^t_\tau-\pi^t\|_{\mathrm{TV}}\leq C_1\sqrt{m}\tau + C_2\kappa^{m\tau}$.
    Now for arbitrary $\epsilon>0$ define $T>0$ such that $C_2\kappa^{T-1}<\frac{\epsilon}{2}$.
    Then for $\tau<\min\Bigl(\frac{\epsilon^2}{4c_1^2 T}\Bigr)$ it follows with $m=\lfloor \frac{T}{\tau} \rfloor$ that $\|\pi^t_\tau-\pi^t\|_{\mathrm{TV}}\leq\epsilon$ which is the desired convergence and the complexity $\tau = \mathcal{O}(-\log^{-1}(\epsilon)\epsilon^2)$ and, since $k\geq m$, for the number of iterations $k=\mathcal{O}(\log^2(\epsilon)\epsilon^2)$.
\end{proof}
\begin{remark}
    We show later that the density $\pi^t$ approaches $\pi$ as $t\rightarrow 0$ in the \gls{tv} distance and, consequently, \gls{daz} constitutes a method for (approximate) sampling from nondifferentiable \emph{and} non-log-concave densities.
\end{remark}
\begin{remark}\label{rmk:step_size}
    The crucial inequality that guarantees ergodicity and enables one to bound the bias of the discretization in the previous proofs (see also \cite[Proposition 5.3]{fruehwirth2024ergodicity}) is
    \[
        \tau < \tau_{\mathrm{max}} = \frac{2\rho}{L_{\nabla U^t}^2}
    \]
    where $\rho$ is the supex modulus of $U^t$ (see, \eg, \eqref{eq:bdd_sec_mom}).
    Both, $\rho$ and $L_{\nabla U^t}$ depend on the \MoreauName{} parameter $t$, which begs the question how the step-size requirements depend on $t$. Let us, therefore, distinguish the two different Moreau parameter regimes:
    \begin{enumerate}
        \item Behavior for large $t$:
        For $U^t = F + M^t_G$, it holds that $L_{\nabla U^t} \leq L_{\nabla F} + \frac{1}{t}$ \cite[Section 3.3]{durmus2018efficient} where the upper bound saturates at $L_{\nabla F}$ for $t\rightarrow\infty$.
        However, this can be circumvented by the choice $F\equiv0$ and $G=U$, in which case \( U^t \) inherits the superexponential property directly from \( G \) due to \Cref{lemma:superexp_M}. 
        Moreover, we find that $\rho\geq \frac{1}{2t}$ when \( t \) is large enough.
        Combining this with $L_{\nabla U^t}\leq \frac{1}{t}$ from \Cref{Moreau_diffble_outside} we find that
        \begin{equation}\label{eq:step_size}
            \tau_{\mathrm{max}} = \frac{2\rho}{L_{\nabla U^t}^2}\geq \frac{\frac{1}{t}}{\frac{1}{t^2}} = t.
        \end{equation}
        Thus, the choice $F\equiv0$ and $G=U$ enables the use of increasingly large step sizes for increasingly large $t$.
        \item Behavior for small $t$:
        The above estimate on $\tau_{\mathrm{max}}$ tends to zero as $t\rightarrow 0$.
        However, if the original potential $U=G$ admits a Lipschitz gradient with Lipschitz constant $L$, we can improve the estimate.
        First, for $t\rightarrow 0$, we obtain that $\rho$ saturates due to \Cref{lemma:superexp_M}.
        Second, we can show that the Lipschitz constant of $\nabla U^t=\nabla M^t_G$ converges to that of $\nabla U=\nabla G$.
        To do so, we need to bound
        \[
            \|\nabla M^t_G(x)-\nabla M^t_G(y)\| = \frac{1}{t}\|x-\prox_{tG}(x)-(y-\prox_{tG}(y))\|.
        \]
        To this end, let us denote $q_x=x-\prox_{tG}(x)$ and $q_y=y-\prox_{tG}(y)$.
        It follows that $q_x = t\nabla G(x-q_x)$ and analogously for $y$. Therefore, $\|q_x-q_y\| \leq tL(\|x-y\| + \|q_x-q_y\|)$ and, consequently, 
        \[
            \|\nabla M^t_G(x)-\nabla M^t_G(y)\|\leq \frac{L}{1-tL}\|x-y\|.
        \]
        We find that as $t\rightarrow 0$, the Lipschitz constant \emph{saturates} at $L$ and the step size \( \tau \) can be chosen as \( \frac{\rho}{2L^2} \).
    \end{enumerate}
    Consequently, when the proximal map of the composite potential can be computed efficiently, it is advantageous to choose \( F \equiv 0 \).
    When this computation is inefficient, it is typically advantageous to split the composite potential at the cost of stronger restrictions on the step size.
\end{remark}

\subsubsection{Comparison to related works}
\label{ssec:comparison}
In the following we put the presented ergodicity results into context with respect to the literature on Langevin based sampling. The strongest assumptions on the function $G$ we have made in this paper are weak convexity, convexity outside a ball, and superexponential growth. Thus, the main contribution in comparison to the literature is that we provide theoretical results assuming neither convexity nor differentiability of the original potential $U$.

Early works on Langevin sampling such as~\cite{dalalyan2016theoretical,durmus2019high} provide theoretical guarantees under the assumption that the potential is differentiable \emph{and} convex.
In \cite{wibisono2019proximal}, guarantees are provided for distributions that satisfy the log-Sobolev inequality, which is implied by strong log-concavity.
Despite the fact that the proximal algorithm in \cite{wibisono2019proximal} can formally be applied also in the non-differentiable case, the analysis still requires a potential that is thrice continuously differentiable and has a Lipschitz continuous gradient.

Theorem 9 in \cite{durmus2017nonasymptotic} is a result about ergodicity and convergence of \gls{ula} for potentials which are superexponential but potentially nonconvex. However, the analysis assumes that the potential is twice continuously differentiable and has a Lipschitz continuous gradient. 
In our approach, for the \emph{original} potential $U$, we assume neither of these properties. For the \emph{regularized} potential $U^t$, we only require that it has a Lipschitz continuous gradient---which is guaranteed through its construction---but do not assume that it is twice continuously differentiable. This relaxation is particularly relevant for \gls{daz} as the Moreau envelope is frequently not twice continuously differentiable.
A prominent example thereof is $G = \|\,\cdot\,\|_1$, whose Moreau envelope is a sum of Huber functions that are only once continuously differentiable.
In \cite[Theorem 12]{durmus2017nonasymptotic} the authors additionally prove a separate result about ergodicity and convergence without assuming that the potential is twice differentiable, and provide more explicit constants that depend on the dimension of the space by invoking a Poincar\'e inequality.
Indeed, Theorem 12 in \cite{durmus2017nonasymptotic} is applicable for sampling from $U^t$ in the inner loop of \gls{daz} for fixed $t$.
However, the result requires a non-constant sequence of step sizes $(\tau_n)_n$ which is square summable but not summable, which prohibits the direct control of the bias (see the discussion immediately after \cite[Theorem 12]{durmus2017nonasymptotic}).
The works \cite{pereyra2016proximal,durmus2018efficient,fruehwirth2024ergodicity, habring2024subgradient,durmus2019analysis} focus on results that heavily rely on the convexity of the potential with less smoothness assumptions.
In \cite{luu2021sampling} the authors also consider potentials which are weakly convex and nondifferentiable. Like in the proposed approach, they tackle the nondifferentiability of the potential by using Moreau envelopes. However, the work does not provide any results about the ergodicity of the continuous-time \gls{sde} solution or the discretization of it. Moreover, error bounds between continuous-time \gls{sde} and discrete Markov chain are provided only in expectation and for finite time.

\subsection{Consistency of Diffusion at Absolute Zero}
The proposed approach of successively sampling from $\pi^t$ with decreasing values of $t$ only makes sense if (i) the distributions $\pi^t$ are similar for similar values of $t$ and (ii) $\pi^t\rightarrow \pi$ as $t\rightarrow 0$.
We verify these properties in this section. Recall that in the following $\tmax>0$ is fixed and satisfies $\tmax<\frac{1}{\ModulusWeakConvexity}$ (\cf\  \cref{ass:phi integrable}).
The main results will make use of the following preliminary results.
\begin{lemma}\label{lemma:moreau_diffble}
    For any $x\in\R^d$, the function $t \mapsto M_G^t(x)$ is differentiable on $(0,\tmax)$  with derivative $\partial_t M_G^t(x)=-\frac{1}{2t^2}\|x-\prox_{tG}(x)\|^2$.
\end{lemma}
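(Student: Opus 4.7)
The plan is to treat this as a standard envelope-theorem-style argument, exploiting the fact that for $t<t_{\max}<1/\rho_G$ the proximal map $\prox_{tG}(x)$ is single-valued by the strong convexity of $y\mapsto G(y)+\tfrac{1}{2t}\|x-y\|^2$ (as noted in \Cref{remark:ass}). Throughout, fix $x\in\R^d$ and denote $p_s\coloneqq \prox_{sG}(x)$ so that
\[
    M_G^s(x)=G(p_s)+\tfrac{1}{2s}\|x-p_s\|^2.
\]

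The key step is to obtain two-sided difference inequalities by comparing the minimizer $p_{t_1}$ at parameter $t_1$ with the (suboptimal) objective value at parameter $t_2$, and vice versa. Concretely, using $M_G^{t_2}(x)\leq G(p_{t_1})+\tfrac{1}{2t_2}\|x-p_{t_1}\|^2$ and subtracting $M_G^{t_1}(x)=G(p_{t_1})+\tfrac{1}{2t_1}\|x-p_{t_1}\|^2$ yields
\[
    M_G^{t_2}(x)-M_G^{t_1}(x)\leq \Bigl(\tfrac{1}{2t_2}-\tfrac{1}{2t_1}\Bigr)\|x-p_{t_1}\|^2,
\]
and the symmetric comparison gives
\[
    M_G^{t_2}(x)-M_G^{t_1}(x)\geq \Bigl(\tfrac{1}{2t_2}-\tfrac{1}{2t_1}\Bigr)\|x-p_{t_2}\|^2.
\]
These two inequalities sandwich the difference quotient between the same prefactor applied to $\|x-p_{t_1}\|^2$ and $\|x-p_{t_2}\|^2$ (the direction being the same independently of whether $t_2>t_1$ or $t_2<t_1$, since dividing by $t_2-t_1$ preserves or flips both inequalities simultaneously).

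To conclude, I divide by $t_2-t_1$ and let $t_2\to t_1$. The scalar prefactor $\bigl(\tfrac{1}{2t_2}-\tfrac{1}{2t_1}\bigr)/(t_2-t_1)$ converges to $-\tfrac{1}{2t_1^2}$. By \Cref{lemma:prox_time_cont}, $s\mapsto p_s$ is continuous on $(0,1/\rho_G)$, so $\|x-p_{t_2}\|^2\to\|x-p_{t_1}\|^2$. Both ends of the sandwich therefore converge to $-\tfrac{1}{2t_1^2}\|x-p_{t_1}\|^2$, giving both existence of the derivative and its claimed value.

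There is no real obstacle here; the only care needed is tracking signs when $t_2<t_1$ versus $t_2>t_1$ so that the sandwich inequalities are correctly oriented after division by $t_2-t_1$, and invoking the continuity of the proximal map from \Cref{lemma:prox_time_cont} (which is precisely why $t_{\max}<1/\rho_G$ is imposed) to pass to the limit on the upper bound.
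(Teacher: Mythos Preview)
Your proposal is correct and follows essentially the same approach as the paper: both derive the two-sided sandwich inequalities by plugging the minimizer at one parameter into the objective at the other, then divide by the increment and pass to the limit using the continuity of $s\mapsto\prox_{sG}(x)$ from \Cref{lemma:prox_time_cont}. The only cosmetic difference is that the paper fixes $0<s<t$ and writes the prefactor as $-\tfrac{1}{2st}$, whereas you keep $t_1,t_2$ generic and track signs explicitly.
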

The proof can be found in \Cref{sec:moreau_diffble}.
\begin{remark}
    In contrast to the standard result \cite[Theorem 5, Section 3.3.2]{evans2022partial}, we proved that $M_G^t$ satisfies the Hamilton-Jacobi equation
    \[
        \partial_tM_G^t(x) + \frac{1}{2}\|\nabla M_G^t(x)\|^2 = 0
    \]
     \emph{without} assuming Lipschitz continuity of $G$.
\end{remark}
\begin{proposition}\label{prop:Moreau_cont}
    For any $x\in\R^d$, the function $t \rightarrow M_G^t(x)$ is Lipschitz continuous on $[0,\tmax)$.
    More precisely,
    \begin{equation}
        |M_G^t(x)-M_G^s(x)|\leq \frac{|s-t|}{2}\phi(\|x\|)^2
    \end{equation}
    for $0\leq s,t<\tmax$ and $x\in\R^d$.
\end{proposition}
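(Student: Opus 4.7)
The plan is to use \Cref{lemma:moreau_diffble} to obtain a uniform pointwise bound on the time-derivative $\partial_t M_G^t(x)$, and then integrate. The key observation is that the derivative expression
\[
    \partial_t M_G^t(x) = -\frac{1}{2t^2}\|x-\prox_{tG}(x)\|^2
\]
can be rewritten in terms of subgradients of $G$ at the proximal point.

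First, I would exploit the first-order optimality condition for the proximal map. Since $t < \tmax < \frac{1}{\ModulusWeakConvexity}$, the objective $y \mapsto G(y) + \frac{1}{2t}\|y-x\|^2$ is strongly convex (\Cref{remark:ass}, item~1), so $\prox_{tG}(x)$ is single-valued and satisfies
\[
    0 \in \partial G\bigl(\prox_{tG}(x)\bigr) + \tfrac{1}{t}\bigl(\prox_{tG}(x)-x\bigr),
\]
by subdifferential calculus (the sum rule applies since the quadratic term is smooth). Consequently, $p \coloneqq \frac{1}{t}\bigl(x - \prox_{tG}(x)\bigr)$ is a regular subgradient of $G$ at $\prox_{tG}(x)$. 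By the very definition of $\phi$ in~\cref{ass:phi integrable}, this yields $\|p\| \leq \phi(\|x\|)$ whenever $t \leq \tmax$, and therefore
\[
    \frac{1}{t^2}\|x-\prox_{tG}(x)\|^2 = \|p\|^2 \leq \phi(\|x\|)^2,
\]
which gives the uniform bound $|\partial_t M_G^t(x)| \leq \frac{1}{2}\phi(\|x\|)^2$ for every $t\in(0,\tmax)$.

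Next, I would invoke the fundamental theorem of calculus: for any $0 < s \leq t < \tmax$, the differentiability on $(0,\tmax)$ and absolute integrability of the derivative on $[s,t]$ yield
\[
    |M_G^t(x) - M_G^s(x)| \leq \int_s^t |\partial_r M_G^r(x)|\,\dd r \leq \frac{t-s}{2}\phi(\|x\|)^2.
\]
To extend this inequality to the boundary case $s=0$, I would use the standard pointwise convergence $M_G^s(x) \to G(x)$ as $s \to 0^+$ stated immediately after \Cref{def:moreau envelope}, and take $s \to 0^+$ in the bound above to conclude $|M_G^t(x) - G(x)| \leq \frac{t}{2}\phi(\|x\|)^2$.

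The only subtlety I anticipate is verifying that the optimality condition really gives a \emph{regular} subgradient of $G$ at $\prox_{tG}(x)$ (matching the definition of $\phi$). This is handled by the sum rule for regular subdifferentials applied to the sum of $G$ (which admits regular subgradients everywhere by weak convexity, see \Cref{remark:ass}) and the smooth quadratic; the minimizer characterization via $0 \in \partial(\cdot)$ is then standard for strongly convex objectives. Apart from this bookkeeping, the argument is essentially a one-line estimate on the derivative followed by integration.
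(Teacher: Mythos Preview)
Your proposal is correct and follows essentially the same route as the paper: use \Cref{lemma:moreau_diffble} for the derivative, rewrite $\tfrac{1}{t}(x-\prox_{tG}(x))$ as an element of $\partial G(\prox_{tG}(x))$ via the optimality condition, bound by $\phi(\|x\|)$, integrate, and pass to the limit $s\to 0^+$ to cover the endpoint. The only difference is cosmetic---the paper states the optimality inclusion directly rather than discussing the sum rule.
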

The proof can be found in \Cref{sec:Moreau_cont}.
\begin{corollary}\label{cor:cont_normalization}
    The map $t\mapsto Z_t \coloneqq \int \exp{\bigl( -U^t(y) \bigr)}\, \dd y$ is Lipschitz continuous on $[0,\tmax)$ with Lipschitz constant $\frac{\intbound}{2}$ where $\intbound$ is defined in \cref{ass:phi integrable}.
\end{corollary}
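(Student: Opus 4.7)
The plan is to differentiate $Z_t$ in $t$, bound the resulting integrand pointwise using the optimality condition characterizing the proximal map, and then apply Fubini's theorem together with the integrability hypothesis \cref{ass:phi integrable}.

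Fix $0 < s < t < \tmax$. I will first show that
\begin{equation*}
    Z_t - Z_s = \int_s^t \int_{\R^d} \frac{1}{2\tau^2}\bigl\|y - \prox_{\tau G}(y)\bigr\|^2 \exp\bigl(-U^\tau(y)\bigr)\,\dd y\,\dd\tau.
\end{equation*}
Per Lemma~\ref{lemma:moreau_diffble}, $\tau \mapsto M_G^\tau(y)$ is differentiable on $(0,\tmax)$ with derivative $-\tfrac{1}{2\tau^2}\|y-\prox_{\tau G}(y)\|^2$; consequently $\tau\mapsto \exp(-U^\tau(y))$ is differentiable with derivative $\tfrac{1}{2\tau^2}\|y-\prox_{\tau G}(y)\|^2 \exp(-U^\tau(y))$. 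Writing $\exp(-U^t(y)) - \exp(-U^s(y))$ as the integral of this derivative in $\tau$, then integrating in $y$, and swapping the integrals via Tonelli's theorem (the integrand is non-negative), gives the displayed identity.

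The key pointwise bound is
\begin{equation*}
    \frac{1}{\tau^2}\bigl\|y - \prox_{\tau G}(y)\bigr\|^2 \leq \phi(\|y\|)^2 \quad \text{for every } y \in \R^d \text{ and } \tau \leq \tmax.
\end{equation*}
This follows from the first-order optimality condition for the proximal map: $\tfrac{1}{\tau}\bigl(y-\prox_{\tau G}(y)\bigr) \in \partial G\bigl(\prox_{\tau G}(y)\bigr)$, so the quantity on the left is the squared norm of a particular element of $\partial G(\prox_{\tau G}(y))$, which is at most $\phi(\|y\|)^2$ by definition of $\phi$. Combining this with the monotonicity $\exp(-U^\tau(y)) \leq \exp(-U^{\tmax}(y))$ for $\tau \leq \tmax$—which holds because $\tau \mapsto M_G^\tau$ is non-increasing by Lemma~\ref{lemma:moreau_diffble}—yields
\begin{equation*}
    |Z_t - Z_s| \leq \int_s^t \int_{\R^d} \tfrac{1}{2}\phi(\|y\|)^2 \exp\bigl(-U^{\tmax}(y)\bigr)\,\dd y\,\dd\tau = \tfrac{\intbound}{2}(t-s),
\end{equation*}
where the last equality uses \cref{ass:phi integrable}. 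Finally, the boundary case $s=0$ is handled by letting $s \downarrow 0$ in the estimate, using continuity of $\tau \mapsto Z_\tau$ at zero, which in turn follows from Proposition~\ref{prop:Moreau_cont} and the dominated convergence theorem with dominating function $\exp(-U^{\tmax})$.

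The main obstacle I expect is the Fubini/Tonelli exchange and the justification that $\exp(-U^\tau(y))$ actually dominates the boundary behaviour as $\tau \to 0$; once the subgradient identification $\tfrac{1}{\tau}(y-\prox_{\tau G}(y)) \in \partial G(\prox_{\tau G}(y))$ is invoked, the remaining estimates are straightforward consequences of monotonicity of $M_G^\tau$ in $\tau$ and of \cref{ass:phi integrable}.
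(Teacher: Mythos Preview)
Your argument is correct and yields exactly the claimed constant $\tfrac{\intbound}{2}$, but it takes a slightly different technical route than the paper. The paper does not differentiate $Z_t$; instead it factors $|\exp(-U^s)-\exp(-U^t)| = \exp(-U^t)\,|1-\exp(-(M_G^s-M_G^t))|$, applies the elementary inequality $1-e^{-a}\leq a$ for $a\geq 0$, and then invokes Proposition~\ref{prop:Moreau_cont} directly to bound $|M_G^s(x)-M_G^t(x)|\leq \tfrac{|s-t|}{2}\phi(\|x\|)^2$. Your version instead writes $Z_t-Z_s$ as a double integral of $\partial_\tau \exp(-U^\tau)$ via Lemma~\ref{lemma:moreau_diffble} and bounds the integrand using the same subgradient identification $\tfrac{1}{\tau}(y-\prox_{\tau G}(y))\in\partial G(\prox_{\tau G}(y))$---which is precisely the ingredient behind Proposition~\ref{prop:Moreau_cont}. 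In effect you have inlined the proof of Proposition~\ref{prop:Moreau_cont} and traded the one-line inequality $1-e^{-a}\leq a$ for a Tonelli argument; the paper's route is marginally shorter because it reuses the already-established pointwise Lipschitz bound, while yours makes the monotone structure (and hence the Tonelli justification) more transparent.
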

\begin{proof}
    Let $s,t\in [0,\tmax)$ and assume without loss of generality that $s<t$ so that $M_G^s(x)\geq M_G^t(x)$ and, consequently, $\left|1-\exp{\bigl(-M_G^s(x)-M_G^t(x)\bigr)}\right| = 1-\exp{\bigl(-M_G^s(x)-M_G^t(x)\bigr)}\leq M_G^s(x)-M_G^t(x)$.
    Using \Cref{prop:Moreau_cont} we can compute that
    \begin{equation}
        \begin{aligned}
            |Z_s-Z_t| &\leq \int \left| \exp{\bigl(-U^s(x)\bigr)}-\exp{\bigl(-U^t(x)\bigr)}\right|\,\dd x \\
            &= \int \left| 1-\exp{\bigl(-(M_G^t(x)-M_G^s(x))\bigr)}\right|\exp{\bigl(-U^s(x)\bigr)}\,\dd x\\
            &\leq \int \left| M_G^t(x)-M_G^s(x) \right|\exp{\bigl(-U^s(x)\bigr)}\,\dd x\\
            &\leq \int \frac{|s-t|}{2}\phi(\|x\|)^2 \exp{\bigl(-U^{\tmax}(x)\bigr)}\,\dd x\\
            &= \frac{\intbound}{2} |s-t|,
        \end{aligned}
    \end{equation}
    where we used \Cref{ass:phi integrable} for the last equality.
\end{proof}
\begin{proposition}\label{prop:curve_lipschitz}
    The curve $t\mapsto \pi^t$ is Lipschitz continuous on $[0,\tmax)$ with respect to the \gls{tv} norm with Lipschitz constant $\tfrac{\intbound}{2Z_0}\left( 1 + \tfrac{Z_{\tmax}}{Z_0}\right)$. If, in addition,
    \[
        \int \|x\|^p\phi(\|x\|)^2 \exp{\bigl(-U^{\tmax}(x)\bigr)}\,\dd x<\infty
    \]
    holds for any $p\in [1,\infty)$, then $t\mapsto \pi^t$ is also Hölder continuous with Hölder exponent $\frac{1}{p}$ for the Wasserstein-$p$ distance.
\end{proposition}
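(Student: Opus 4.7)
My plan is to reduce both statements to the two Lipschitz-in-$t$ ingredients already established: \Cref{prop:Moreau_cont} controls $t \mapsto M^t_G(x)$, hence also $t \mapsto \exp(-U^t(x))$, and \Cref{cor:cont_normalization} controls $t \mapsto Z_t$. The algebraic step tying these together is the pointwise decomposition
\[
    \pi^s(x) - \pi^t(x) = \frac{\exp(-U^s(x)) - \exp(-U^t(x))}{Z_s} + \exp(-U^t(x))\left(\frac{1}{Z_s} - \frac{1}{Z_t}\right),
\]
which cleanly separates numerator and denominator variations.

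For the \gls{tv} bound, I would take absolute values and integrate. The first summand is bounded via the elementary estimate $|\exp(-U^s(x)) - \exp(-U^t(x))| \leq \exp(-U^{\tmax}(x))\,|M^s_G(x) - M^t_G(x)|$ (using that $t \mapsto M^t_G$ is nonincreasing, so $U^{\tmax}$ dominates) combined with \Cref{prop:Moreau_cont}, producing $\intbound$ from \cref{ass:phi integrable} upon integration. The second summand integrates to $|Z_t - Z_s|\,Z_t/(Z_s Z_t)$; keeping the factorisation and using $Z_t \leq Z_{\tmax}$, $Z_s, Z_t \geq Z_0$ together with \Cref{cor:cont_normalization} produces the extra factor $Z_{\tmax}/Z_0$. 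Collecting these contributions yields the stated Lipschitz constant $\tfrac{\intbound}{2 Z_0}\bigl(1 + Z_{\tmax}/Z_0\bigr)$.

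For the Wasserstein-$p$ bound, I would build a shared-mass coupling. Set $p_{\min} = \min(\pi^s, \pi^t)$ and let $1 - \alpha = \|\pi^s - \pi^t\|_{\TV}$; construct $(X,Y)$ so that with probability $\alpha$ we have $X = Y \sim p_{\min}/\alpha$, and with probability $1 - \alpha$ we draw $X$ and $Y$ independently from the residual densities $(\pi^s - p_{\min})/(1-\alpha)$ and $(\pi^t - p_{\min})/(1-\alpha)$. The matched event contributes nothing to $\mathbb{E}\|X - Y\|^p$, the factors $1/(1-\alpha)$ in the residual densities cancel exactly against the $(1-\alpha)$ probability of the unmatched event, and the inequality $\|x-y\|^p \leq 2^{p-1}(\|x\|^p + \|y\|^p)$ together with $(\pi^s - p_{\min}) + (\pi^t - p_{\min}) = |\pi^s - \pi^t|$ collapses the estimate to
\[
    W_p^p(\pi^s, \pi^t) \leq 2^{p-1}\int \|x\|^p \, |\pi^s(x) - \pi^t(x)|\, \dd x.
\]
I would then re-run the \gls{tv} estimate above with the weight $\|x\|^p$ inserted in each integrand. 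The first term is handled by the added assumption $\int \|x\|^p \phi(\|x\|)^2 \exp(-U^{\tmax})\,\dd x < \infty$ directly, and the second term needs only $\int \|x\|^p \exp(-U^{\tmax})\,\dd x < \infty$, which follows from the (at least linear) growth of $U^{\tmax}$ guaranteed by \Cref{cor:moreau_convex_outside} and the discussion in \Cref{remark:ass}. This yields $W_p^p(\pi^s, \pi^t) \leq C|s - t|$, hence Hölder continuity with exponent $1/p$.

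The main subtlety is the coupling argument for the Wasserstein bound: the residual densities carry a superficially singular $1/(1-\alpha)$ factor, and one must verify that it cancels exactly with the probability of the unmatched event. Once this cancellation is exploited, the rest reduces to a weighted \gls{tv} estimate, and the additional moment hypothesis is precisely what is required to absorb the $\|x\|^p$ weight in both terms of the decomposition.
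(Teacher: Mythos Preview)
Your proposal is correct and follows essentially the same approach as the paper: the same additive decomposition of $\pi^s-\pi^t$, the same pointwise bound $|\exp(-U^s)-\exp(-U^t)|\leq \exp(-U^{\tmax})\,|M_G^s-M_G^t|$ via \Cref{prop:Moreau_cont}, and the same reduction of the Wasserstein bound to a weighted \gls{tv} estimate with weight $2^{p-1}\|x\|^p$. The only difference is that the paper invokes \cite[Theorem 6.15]{villani2009optimal} for the inequality $\Wc_p^p(\pi^s,\pi^t)\leq 2^{p-1}\int\|x\|^p|\pi^s(x)-\pi^t(x)|\,\dd x$, whereas you reprove it via the explicit maximal coupling; this is an expository choice rather than a genuinely different route.
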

\begin{proof}
    Since the \gls{tv} distance can be computed as $\|\pi^s-\pi^t\|_{\TV} = \int |\pi^s(x)-\pi^t(x)|\,\dd x$ and the Wasserstein-$p$ distance for $p\in [1,\infty)$ satisfies $\Wc^p_p(\pi^s,\pi^t) \leq \int 2^{p-1}\|x\|^p|\pi^s(x)-\pi^t(x)|\,\dd x$ (see \cite[Theorem 6.15]{villani2009optimal}) we can prove both assertions by bounding $\int g(x)|\pi^s(x)-\pi^t(x)|\,\dd x$ and afterwards considering respective choices of $g$. Let $s,t\in [0,\tmax)$ and assume as before $s<t$. It follows that
    \begin{equation}
        \begin{aligned}
            &\int_{\R^d}g(x)\left|\pi^s(x)-\pi^t(x)\right|\,\dd x\\
            &\quad= \frac{1}{Z_s}\int_{\R^d}g(x)\left|\exp{\bigl(-U^s(x)\bigr)}- \exp{\bigl(-U^t(x)\bigr)}\right|\,\dd x\\
            &\qquad + \int_{\R^d}g(x)\left|\tfrac{1}{Z_s} - \tfrac{1}{Z_t}\right|\exp{\bigl(-U^t(x)\bigr)}\,\dd x\\
            &\quad\leq \frac{1}{Z_0}\int_{\R^d}g(x)\left|1- \exp{\bigl(-(M_G^t(x)-M_G^s(x))\bigr)}\right|\exp\left( -U^{\tmax}(x)\right)\,\dd x\\
            &\qquad+ \left|\tfrac{1}{Z_s} - \tfrac{1}{Z_t}\right|Z_{\tmax}\int_{\R^d}g(x)\pi^{\tmax}(x)\,\dd x\\
            &\quad\leq \tfrac{1}{2Z_0}|s-t|\int_{\R^d}g(x)\phi(\|x\|)^2\exp\left( -U^{\tmax}(x)\right)\,\dd x\\
            &\qquad + \left|\tfrac{Z_t-Z_s}{Z_0^2}\right|Z_{\tmax} \E_{X\sim\pi^{\tmax}}[g(X)] \\
            &\quad\leq \tfrac{\intbound}{2Z_0}\left( 1 + \frac{Z_{\tmax}}{Z_0}\E_{X\sim\pi^{\tmax}}[g(X)]\right)|s-t|
        \end{aligned}
    \end{equation}
    where we made again use of \Cref{ass:phi integrable}.
    The choice $g \equiv 1$ proves Lipschitz continuity with respect to the \gls{tv} norm with the stated Lipschitz constant.
    The choice $g(x) = 2^{p-1}\|x\|^p$ proves Hölder continuity with exponent $\frac{1}{p}$ with respect to the Wasserstein-$p$ distance under the additional assumption.
\end{proof}
\begin{remark}
    The estimation of the Lipschitz constant in \Cref{prop:curve_lipschitz} can inform the choice of the sequence of Moreau parameters \( t_N,t_{N-1},\dotsc,t_1 \) since it can be used to bound the \gls{tv} distance between the consecutive distributions \( \pi^{t_{n+1}} \) and \( \pi^{t_{n}} \).
    While the Lipschitz constant is difficult to estimate without further assumptions on $U$, one may obtain upper bounds using, \eg, \cref{lemma:U_linear_growth} and, if available, growth bounds on the subgradient of $G$.
    As an illustrative example, consider the Gaussian $G(x) = \tfrac{\|x\|^2}{2}$. Then, $\pi^t(x) \propto \exp \bigl(-\tfrac{\|x\|^2}{2(1+t)}\bigr)$ and $Z_0 = (2\pi)^{d/2}$ and $Z_{t} = (2\pi (1+t)^2)^{d/2}$. Moreover, one can easily check that $\intbound = Z_{\tmax}$. Thus, the Lipschitz constant of $t\mapsto\pi^t$ is
    \[
        \frac{\intbound}{2Z_0}\left( 1 + \frac{Z_{\tmax}}{Z_0}\right) = \frac{1}{2}(1+\tmax)^{d}(1+(1+\tmax)^{d}) = \frac{1}{2}(1+\tmax)^{d} +(1+\tmax)^{2d}.
    \]
    In particular, the Lipschitz constant scales exponentially with respect to the dimension and polynomially with respect to $\tmax$. We want to point out, however, that the proposed scheme will sample correctly for any sequence of Moreau parameters $(t_n)_n$ as long as the final Moreau parameter is sufficiently small since the inner loop in \cref{algo} is ergodic with stationary distribution close to the target (for small $t$ and appropriate $\tau$). The annealing scheme for DAZ is solely an acceleration so that the exponential scaling of the Moreau levels may be ignored without the risk of losing the correct convergence of the scheme.
\end{remark}

As a numerical confirmation of \cref{prop:curve_lipschitz} we show in \cref{fig:compare_moreau} the estimated \gls{tv} distances $\|\pi^t_\tau-\pi\|_{\mathrm{TV}}$ as well as $\|\pi^t-\pi\|_{\mathrm{TV}}$ in the case of a Laplace distribution\footnote{While $U(x) = |x|$ is not superexponential, ergodicity and convergence of \gls{ula} applied to the potential $M^t_G$ in this case follow from \cite[Section 3.2]{durmus2017nonasymptotic}.} $U(x) = G(x) = |x|$ for $x\in\R$ for different values of the Moreau parameter $t \in \{0.001,0.005,0.01,0.05,0.1,0.5,1.0 \}$. The distance $\|\pi^t-\pi\|_{\mathrm{TV}}$ is computed by numerical integration over $[-10,10]$. The distance $\|\pi^t_\tau-\pi\|_{\mathrm{TV}}$ is estimated by sampling \num{10000} independent chains for \num{2000} iterations using \gls{ula} with the potential $M^t_G$ and step size $\tau = \frac{t}{2}$ in accordance with \eqref{eq:step_size} and afterwards computing the TV distance between $\pi$ and the empirical sample distribution, again, using numerical integration. The theoretical Lipschitz continuity of $t\mapsto\pi^t$ with respect to the \gls{tv} norm is also reflected in the practical experiments. Moreover, the errors $\|\pi^t_\tau-\pi\|_{\mathrm{TV}}$ increase as the step size $\tau$ increases.

\begin{figure}
\centering
    \begin{tikzpicture}[baseline] 
        \begin{axis}[width=0.5\textwidth,title={}, cycle list name=plotcycle, xlabel={Lag $k$},ymin=-0.05,ymax=0.35,ylabel = {$\|\;\cdot\;-\pi\|_{\mathrm{TV}}$},xlabel = {Moreau parameter $t$},legend pos=outer north east,legend style={draw=none},legend cell align={left}]
        \foreach \yy in {TV_err_true,0.001,0.1,0.5} {
    		\addplot table [x=t, y=\yy, col sep=comma] {images/laplace_compare_moreau.csv};
            }
            \legend{$\pi^t$, $\pi^t_\tau$ ($\tau=0.001$), $\pi^t_\tau$ ($\tau=0.1$), $\pi^t_\tau$ ($\tau=0.5$)}
    	\end{axis}
    \end{tikzpicture}
    \caption{\Gls{tv} distance between the target $\pi(x)\propto \exp(-G(x))$ with $G(x) = |x|$ and the Moreau envelope based potentials $\pi^t(x)\propto \exp(-M^t_G(x))$ as well as the corresponding stationary distributions of the \gls{ula} chain $\pi^t_\tau$ for step sizes $\tau\in\{0.001,0.1,0.5\}$. We clearly observe the proven Lipschitz continuity of $t\mapsto\pi^t$ with $\pi^t\rightarrow \pi$ as $t\rightarrow 0$.
    }
\label{fig:compare_moreau}
\end{figure}


\subsection{Relation to diffusion models}
The practical differences between the proposed approach and annealed Langevin sampling---namely, that the proposed approach is applicable to any given potential (so long as it satisfies our assumptions) without training---were already discussed in~\cref{sec:related}.
In this section, we establish a theoretical relation between the proposed method and annealed Langevin sampling by providing convergence results of the potential used in annealed Langevin sampling to that used in \gls{daz} through a zero-temperature limit.
In the following we consider for simplicity the setting $U=G$ and $F\equiv 0$.

As elaborated in \cref{sec:related}, one variant of annealed Langevin sampling is to consider a family of distributions indexed by \( t \) and defined as $\pi*\Nc(0,\sqrt{t})$.
The corresponding log-density reads as
\begin{equation}\label{eq:lse_motivation}
     \log(\pi*\Nc(0,\sqrt{t})) = \log\left(\frac{1}{(2\pi t)^{d/2}}\int_{\R^d} \exp\left(-G(y) - \frac{1}{2t}\|\,\cdot\,-y\|^2\right)\,\dd y\right) + const.
\end{equation}
where the (unknown) $const.$ ensures normalization.
A major inspiration for \gls{daz} is the observation that the finite-sum version of this log-integral-exp expression is frequently encountered in machine learning and related fields and is also referred to as a \emph{softmax}.
One way to interpret the present work is that this softmax is replaced by a strict maximum, \ie,
\begin{equation}
     \log(\pi*\Nc(0,\sqrt{t})) \approx \frac{1}{(2\pi t)^{d/2}}\max_{y \in \R^d} -G(y) - \frac{1}{2t}\|\,\cdot\,-y\|^2 = -\frac{1}{(2\pi t)^{d/2}}M^t_G,
\end{equation}
where we obtain precisely the Moreau envelope up to a multiplicative factor.

In this section, we formalize the relation between the softmax potential and the Moreau potential via the introduction of a (Boltzmann) temperature \( T \).
For the finite-sum case, it is well known\footnote{%
    This can be derived in various ways; a particularly cute one is to identify that the convex conjugate of \( l(x) = \log(\sum_{i=1}^k \exp(x_i)) \) is the Shannon entropy restricted to the simplex \( \triangle^k \), that is \( l^*(y) = \sum_{i=1}^k y_i\log(y_i) + \iota_{\triangle^k}(y) \), see~\cite[Example 3.25]{Boyd_Vandenberghe_2004}.
    Since \( l_T \coloneqq Tl(\,\cdot\,/T) \) is closed and convex, \( l_T = l_T^{**} \), and standard scaling laws imply that \( l_T^* = Tl^* \).
    Thus, \( l_T = l_T^{**} = \sup_{y\in\triangle^k} \langle \,\cdot\,, y \rangle - Tl^*(y) \), which is the standard maximum when \( T = 0 \).
} that for any \( x \in \R^k \),
\begin{equation}
    \max_{i\in\{1,\dotsc,k \}} x_i = \lim_{T \to 0} T \log\Biggl( \sum_{i=1}^k \exp\biggl(\frac{x_i}{T}\biggr) \Biggr).
\end{equation}
In analogy, we might expect that
\[-M^t_G(y) = \lim_{T\rightarrow 0} T\log\left(\int \exp\left(- \frac{1}{T}(G(x) + \frac{1}{2t}\|x-y\|^2)\right)\dd x\right).\]
We will now establish this convergence in a rigorous manner, pointwise for the potential and its gradient as well as in total variation. To do so, motivated by the above, we define the following unnormalized diffusion-based potential with temperature $T>0$
\begin{equation}\label{eq:diff_pot}
    G^t_T(x) = -T\log\left(\frac{1}{(2\pi T t)^{d/2}}\int_{\R^d} \exp\left(-\frac{G(y)}{T} - \frac{1}{2T t}\|x-y\|^2\right)\,\dd y\right).
\end{equation}
If the temperature $T$ is set to \num{1}, this is precisely the negative log-density of the distribution of variance exploding diffusion \eqref{eq:lse_motivation}. By letting $T\rightarrow 0$, on the other hand, we recover the \gls{daz} potential as will be shown below.

An alternative motivation for the consideration of the potential $G^t_T$ arises through an analysis of the Hamilton-Jacobi equation:
As shown above, the Moreau envelope satisfies
\begin{equation}
    \begin{cases}
        \partial_t M^t_G (x) + \frac{1}{2}\|\nabla M^t_G (x)\|^2 = 0,\quad t>0\\
        M^0_G = G.
    \end{cases}
\end{equation}
As an approximation, one might consider for small $T>0$ the function $G^t_T$ that solves
\begin{equation}\label{eq:approx_HJ}
    \begin{cases}
        \partial_t G^t_T (x) + \frac{1}{2}\|\nabla G^t_T (x)\|^2 = T \Delta G_T^t,\quad t>0\\
        G^0_T = G
    \end{cases}
\end{equation}
whose solution converges to $M^t_G$ uniformly as $T\rightarrow 0$ if $G$ is bounded and Lipschitz\footnote{Neither of which is assumed in this manuscript.} \cite[Theorem 5.1]{crandal1984two}. The function $G^t_T$ from \eqref{eq:diff_pot} is, indeed, a solution to \eqref{eq:approx_HJ}. Based on these observations, in \cite{heaton2024global,osher2023hamilton} the authors propose to estimate the Moreau envelope by computing \eqref{eq:diff_pot} for small $T>0$, which is done via Monte Carlo integration. 

The following proposition formally states a result about the convergence of the two potentials.
\begin{proposition}\label{prop:diffusion_potential}
    The \gls{daz} potential $M^t_G$ is obtained as the zero-temperature limit of the diffusion potential $G^t_T$ from \eqref{eq:diff_pot}.
    That is, $G^t_T(x)\rightarrow M_G^t(x)$, as $T\rightarrow 0^+$ for any $x \in \R^d$.
    In addition, if $G$ is differentiable and $\nabla G$ is locally Lipschitz continuous, the convergence is uniform on compact sets.
\end{proposition}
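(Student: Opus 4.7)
The plan is to recognize $G^t_T$ as a Laplace-type free energy. Writing $\varphi_x(y) \coloneqq G(y) + \tfrac{1}{2t}\|x - y\|^2$ so that $M_G^t(x) = \inf_{y\in\R^d} \varphi_x(y)$, we have
\[
    G^t_T(x) = \Phi(x,T) + \tfrac{Td}{2}\log(2\pi T t), \qquad \Phi(x,T) \coloneqq -T\log \int_{\R^d} \exp\bigl(-\varphi_x(y)/T\bigr)\,\dd y.
\]
Since the prefactor $\tfrac{Td}{2}\log(2\pi T t)$ vanishes as $T \to 0^+$ uniformly in $x$, the task reduces to the low-temperature Laplace principle $\Phi(x,T) \to M_G^t(x)$, which I would establish by matching upper and lower bounds.

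For the bound $\limsup_{T \to 0^+} \Phi(x,T) \leq M_G^t(x)$, I would fix $\epsilon > 0$ and pick any $p \in \prox_{tG}(x)$, which is nonempty by \Cref{remark:ass}. Continuity of $\varphi_x$ at $p$ yields a radius $\delta > 0$ with $\varphi_x(y) \leq M_G^t(x) + \epsilon$ on $\Ball{\delta}{p}$, hence $\int \exp(-\varphi_x(y)/T)\,\dd y \geq \operatorname{vol}(\Ball{\delta}{p})\exp\bigl(-(M_G^t(x) + \epsilon)/T\bigr)$. Applying $-T\log$, sending $T \to 0^+$, and then $\epsilon \to 0$ gives the claim.

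For the reverse bound $\liminf_{T \to 0^+} \Phi(x,T) \geq M_G^t(x)$, the key observation is the monotonicity $\exp(-a/T) \leq \exp(-a/T_0)$ for $a \geq 0$ and $T \leq T_0$: since $\varphi_x - M_G^t(x) \geq 0$ pointwise, this yields for every $T_0 > 0$ and $T \in (0, T_0]$
\[
    \int \exp(-\varphi_x/T)\,\dd y \leq \exp\bigl(-M_G^t(x)/T\bigr)\int \exp\bigl(-(\varphi_x - M_G^t(x))/T_0\bigr)\,\dd y.
\]
The right-hand integral is finite via the crude bound $\int \exp(-\varphi_x/T_0)\,\dd y \leq (2\pi t T_0)^{d/2}$ that follows from $G \geq 0$. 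Taking $-T\log$ and sending $T \to 0^+$ yields the lower bound and completes the pointwise convergence.

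For uniform convergence on a compact set $K$ under the extra hypothesis, the upper-bound constant is already uniform on $K$ because $M_G^t$ is continuous. The lower bound needs sharpening: using first-order optimality $\nabla G(p) + (p-x)/t = 0$ at $p = \prox_{tG}(x)$ to cancel linear terms, a Taylor expansion together with the local Lipschitz constant $L$ of $\nabla G$ on a neighborhood of $\prox_{tG}(K)$ yields $\varphi_x(p + z) - M_G^t(x) \leq \tfrac{L + 1/t}{2}\|z\|^2$ for $\|z\| \leq r$. Rescaling $z = \sqrt{T}u$ then provides $\int \exp(-\varphi_x/T)\,\dd y \geq c_K T^{d/2}\exp(-M_G^t(x)/T)$ for some $c_K > 0$, uniformly in $x \in K$ and for all small enough $T$, giving a uniform bound on $\Phi(x,T) - M_G^t(x)$ that tends to zero. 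The main obstacle will be ensuring uniformity over $x \in K$ of the constants $L$ and $r$; this requires compactness of the selection $\prox_{tG}(K)$, which follows directly from \Cref{lemma:prox_time_cont} when $t < 1/\ModulusWeakConvexity$ and otherwise needs the coercivity of the proximal map from \Cref{lemma:prox_coercive} combined with a measurable selection to control the possibly multi-valued image.
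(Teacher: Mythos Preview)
Your proposal is correct and follows essentially the same Laplace-principle argument as the paper: both directions of the pointwise limit are obtained by sandwiching the integral (the paper splits $\exp(-\varphi_x/T)=\exp(-\varphi_x)^{1/T-1}\exp(-\varphi_x)$ where you use the monotonicity trick with a reference $T_0$, and the paper uses the sublevel set $\Omega_\epsilon(x)$ where you use the ball $B_\delta(p)$), and the uniform statement is handled in both cases via the descent-lemma quadratic bound $\varphi_x(p+z)-M_G^t(x)\le \tfrac{L+1/t}{2}\|z\|^2$. Your extra care about the uniformity of $L$ over $\prox_{tG}(K)$ is well placed---the paper is slightly loose here, asserting Lipschitzness of $\nabla G$ ``on the compact set $K$'' when the estimate is actually applied near $p\in\prox_{tG}(K)$---though a measurable selection is not really needed: boundedness of $\bigcup_{x\in K}\prox_{tG}(x)$ already follows from $\tfrac{1}{2t}\|x-p\|^2\le G(x)-G(p)\le\sup_K G$.
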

\begin{proof}
    The result is an extension of the convergence $\|\,\cdot\,\|_p\rightarrow\|\,\cdot\,\|_\infty$ as $p\rightarrow\infty$. Using Hölder's inequality, we find that
    \begin{equation}
        \begin{aligned}
            &\frac{1}{(2\pi T t)^{d/2}}\int_{\R^d} \exp\left(\frac{-G(y)-\frac{1}{2t}\|x-y\|^2}{T}\right)\,\dd y\\
            &\quad= \frac{1}{(2\pi T t)^{d/2}}\int_{\R^d} \exp\left(-G(y)-\frac{1}{2t}\|x-y\|^2\right)^{\frac{1}{T}-1}\exp\left(-G(y)-\frac{1}{2t}\|x-y\|^2\right)\,\dd y\\
            &\quad\leq \exp\left(-M_G^t(x)\right)^{\frac{1}{T}-1} \frac{1}{(2\pi T t)^{d/2}}\int_{\R^d}\exp\left(-G(y)-\frac{1}{2t}\|x-y\|^2\right)\,\dd y\\
            &\quad\leq \exp\left(-M_G^t(x)\right)^{\frac{1}{T}-1} \frac{1}{(2\pi T t)^{d/2}}\int_{\R^d}\exp\left(-\frac{1}{2t}\|x-y\|^2\right)\,\dd y\\
            &\quad= \exp\left(-M_G^t(x)\right)^{\frac{1}{T}-1}\frac{1}{T^{d/2}},
        \end{aligned}
    \end{equation}
    where we used that \( G \geq 0\).
    Taking the negative logarithm on both sides and multiplying by $T>0$ yields
    \begin{equation}\label{eq:diffusion1}
        \begin{aligned}
            G^t_T(x)\geq (1-T)M_G^t(x) + \tfrac{d}{2}T\log(T)
        \end{aligned}
    \end{equation}
    Letting $T\rightarrow0$ proves that $\liminf_{T\rightarrow 0}G^t_T(x)\geq M_G^t(x)$.
    For the opposite bound, let us consider $\Omega_\epsilon(x)\coloneqq \left\{ y\in\R^d\;\middle|\; G(y) + \frac{1}{2t}\|x-y\|^2\leq M_G^t(x) + \epsilon\right\}$ for some small \( \epsilon > 0 \).
    The continuity of $G$ implies that this set has nonzero Lebesgue measure that we denote by \( |\Omega_\epsilon(x)| \).
    Moreover,
    \begin{equation}
        \begin{aligned}
            \frac{1}{(2\pi T t)^{d/2}}\int_{\R^d} \exp\left(\frac{-G(y) - \frac{1}{2t}\|x-y\|^2}{T}\right)\,\dd y
            \geq \frac{1}{(2\pi T t)^{d/2}}|\Omega_\epsilon(x)|\exp\left(-\frac{M_G^t(x)+\epsilon}{T}\right)
        \end{aligned}
    \end{equation}
    which shows that $|\Omega_\epsilon(x)|<\infty$. Taking the negative logarithm and multiplying by $T>0$ again, we find
    \begin{equation}\label{eq:diffusion2}
        \begin{aligned}
            G^t_T(x)\leq - T\log(|\Omega_\epsilon(x)|) + Td/2\log(2\pi T t) + M_G^t(x) + \epsilon.
        \end{aligned}
    \end{equation}
    Since $\epsilon$ was arbitrary, we conclude that $\limsup_{T\rightarrow 0}G^t_T(x)\leq M_G^t(x)$ which shows the pointwise convergence.
    
    To obtain uniform convergence on compact sets under the additional assumptions, we first note that if $x\in K$ with $K$ compact, $G(x)$ and $M^t_G(x)$ are bounded and, consequently, $T$ can be chosen uniformly for $x\in K$ in \eqref{eq:diffusion1}.
    For the upper bound we need to estimate $\log(|\Omega_\epsilon(x)|)$.
    By local Lipschitz continuity, there exists $L>0$ such that $\nabla G$ is $L$-Lipschitz on the compact set $K$.
    The first-order approximation around $p\in \prox_{tG}(x)$ combined with the descent lemma shows that
    \begin{equation}
        \begin{aligned}
            G(y) + \frac{1}{2t}\|y-x\|^2 
            \leq M_G^t(x) + \biggl(\frac{L}{2}+\frac{1}{2t}\biggr)\|y-p\|^2.
        \end{aligned}
    \end{equation}
    Consequently, $\left\{ y\;\middle| (\frac{L}{2}+\frac{1}{2t})\|y-p\|^2\leq \epsilon \right\}\subset \Omega_\epsilon(x)$. Thus, we can lower bound $|\Omega_\epsilon(x)|$ uniformly for $x\in K$, which allows us also to choose $T$ uniformly for $x\in K$ in \eqref{eq:diffusion2} concluding the proof.
\end{proof}

\begin{proposition}\label{prop:zero_temp_TV}
    If $\nabla G$ is globally Lipschitz then, the distribution of annealed Langevin sampling converges to that of \gls{daz} in \gls{tv}. More precisely, we have for the probability densities $\pi^t_T(x) \propto \exp\left(-G^t_T(x)\right)$ and $\pi^t(x)\propto \exp\left(-M^t_G(x)\right)$ for any $t>0$ that
    \[
    \lim_{T\rightarrow 0}\|\pi^t_T - \pi^t\|_{\mathrm{TV}}=0.
    \]
\end{proposition}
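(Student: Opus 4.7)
My plan is to apply Scheffé's lemma: since $\pi^t_T$ and $\pi^t$ are probability densities, it suffices to establish pointwise (a.e.) convergence $\pi^t_T(x) \to \pi^t(x)$, and $L^1$ (hence TV) convergence follows automatically. So the work reduces to (i) pointwise convergence of the unnormalized densities and (ii) convergence of the normalizing constants.

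Part (i) is immediate from \Cref{prop:diffusion_potential}: the zero-temperature limit $G^t_T(x) \to M^t_G(x)$ yields $\exp(-G^t_T(x)) \to \exp(-M^t_G(x))$ for every $x$. For part (ii), I would apply dominated convergence. The quantitative lower bound established inside the proof of \Cref{prop:diffusion_potential}, namely $G^t_T(x) \geq (1-T)\, M^t_G(x) + \tfrac{d}{2}T\log T$, rearranges into
\[
    \exp\bigl(-G^t_T(x)\bigr) \leq T^{-dT/2} \exp\bigl(-(1-T) M^t_G(x)\bigr).
\]
Since $T\log T \to 0$, the prefactor $T^{-dT/2}$ is bounded (say by $2$) for all $T$ below some threshold $T_0 < 1/2$, yielding the $T$-independent bound $\exp(-G^t_T(x)) \leq 2\exp(-\tfrac{1}{2} M^t_G(x))$ for $T < T_0$. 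This bound is integrable: since $F \equiv 0$ gives $U^t = M^t_G$, \Cref{lemma:U_linear_growth} combined with \Cref{cor:moreau_convex_outside} ensures at least linear growth of $M^t_G$ outside a ball, so $\exp(-\tfrac{1}{2} M^t_G)$ has exponential tails. Dominated convergence then yields $Z^t_T \to Z^t$.

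Combining (i) and (ii), the ratios satisfy $\pi^t_T(x) = \exp(-G^t_T(x))/Z^t_T \to \exp(-M^t_G(x))/Z^t = \pi^t(x)$ pointwise, and Scheffé's lemma delivers the claimed TV convergence. The main technical obstacle is the construction of the uniform-in-$T$ integrable dominating function; this is made possible by the quantitative lower bound on $G^t_T$ already derived in \Cref{prop:diffusion_potential} together with the linear growth of the Moreau envelope. The global Lipschitz hypothesis on $\nabla G$ enters the argument indirectly, both through the regularity and growth properties of $M^t_G$ and through the uniform validity on compact sets of \Cref{prop:diffusion_potential}; the pointwise limit itself only requires the local Lipschitz version.
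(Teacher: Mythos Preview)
Your proof is correct and takes a genuinely different route from the paper. The paper proves directly that $\int|\exp(-G^t_T)-\exp(-M^t_G)|\,\dd x\to 0$ by dominated convergence on the \emph{difference}, which forces a two-sided estimate on $G^t_T-M^t_G$; the needed uniform-in-$x$ upper bound comes from a uniform lower bound $|\Omega_\epsilon(x)|\geq c\epsilon^{d/2}$, and this is precisely where the global Lipschitz constant of $\nabla G$ enters. From $L^1$ convergence of the unnormalized densities the TV statement follows by the usual splitting with the partition functions. Your Scheff\'e route only needs pointwise convergence of the normalized densities, hence only convergence of the partition functions, hence only a one-sided integrable majorant for $\exp(-G^t_T)$; the bound \eqref{eq:diffusion1} from \Cref{prop:diffusion_potential} supplies exactly that and is valid without any Lipschitz hypothesis on $\nabla G$.

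What each approach buys: the paper's argument yields the stronger intermediate conclusion that the unnormalized densities converge in $L^1$, at the price of the extra hypothesis. Your argument is shorter and, in fact, does not use the global Lipschitz assumption at all---your closing paragraph misidentifies where it enters. The pointwise limit in \Cref{prop:diffusion_potential} holds with no Lipschitz assumption, your majorant $C\exp(-\tfrac12 M^t_G)$ relies only on $M^t_G\geq 0$ together with its linear growth via \Cref{cor:moreau_convex_outside} and \Cref{lemma:U_linear_growth}, and Scheff\'e needs no uniformity on compacts. So your proof actually establishes the proposition under weaker hypotheses than stated.
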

\begin{proof}
    If $\nabla G$ is globally Lipschitz, then by the same arguments as in \cref{prop:diffusion_potential} we find that $|\Omega_\epsilon| \geq c \epsilon^{d/2}$ for some dimension-dependent constant $c$.\footnote{Of course, $c$ is explicit. The exact form is, however, not relevant for our purposes.} It follows that
    \begin{equation}
        \begin{aligned}
            &(1-T)M^t_G(x)+\tfrac{d}{2}T\log(T) \\&\quad\leq G^t_T(x) \\&\quad\leq  - T\log(|\Omega_\epsilon(x)|) + T d/2 \log(2\pi T t) + M_G^t(x) + \epsilon\\
            &\quad\leq - T\log(c\epsilon^{d/2}) + Td/2\log(2\pi T t) + M_G^t(x) + \epsilon.
        \end{aligned}
    \end{equation}
    Thus, we can estimate
    \begin{equation}\label{eq:diffusion_lebesgue}
        \begin{aligned}
            |G^t_T(x)-M_G^t(x)|&\leq \max\left\{TM_G^t(x)+\tfrac{d}{2}T|\log(T)|, T|\log(c\epsilon^{d/2})| + Td/2|\log(2\pi T t)| + \epsilon\right\}\\
            &\leq TM_G^t(x) - c T(\log(T\epsilon)-1) + \epsilon,
        \end{aligned}
    \end{equation}
    where $c>0$ is an appropriate constant and assuming without loss of generality $T,\epsilon<1$. We begin by showing the convergence
    \begin{equation}\label{eq:tv_conv1}
    \int_{\R^d} |\exp\left(-G^t_T(x)\right)-\exp\left(-M^t_G(x)\right)|\dd x\rightarrow 0
    \end{equation}
    as $T\rightarrow 0$ for which we use Lebesgue's dominated convergence theorem. The integrand converges to zero pointwise by \cref{prop:diffusion_potential}. Thus, we are left to show that there exists a nonnegative integrable upper bound. We can estimate
    \begin{equation}\label{eq:diff_lebesgue2}
        \begin{aligned}
            \left|\exp\left(-G^t_T(x)\right)-\exp\left(-M^t_G(x)\right)\right| &= |\exp\left(M^t_G(x)-G^t_T(x)\right)-1|\exp\left(-M^t_G(x)\right)\\
            &\leq \left(\exp\left(|M^t_G(x)-G^t_T(x)|\right)+1\right)\exp\left(-M^t_G(x)\right).
        \end{aligned}
    \end{equation}
    Inserting \eqref{eq:diffusion_lebesgue} yields
    \begin{equation}
        \begin{aligned}
        &\left|\exp\left(-G^t_T(x)\right)-\exp\left(-M^t_G(x)\right)\right| \\
        &\quad\leq \left( \exp\left( TM_G^t(x) - c T(\log(T\epsilon)-1) + \epsilon\right) + 1\right) \exp\left(-M^t_G(x)\right)\\
        &\quad= \exp\left( -(1-T)M_G^t(x) - c T(\log(T\epsilon)-1) + \epsilon\right) + \exp\left(-M^t_G(x)\right).
        \end{aligned}
    \end{equation}
    Since we are interested in the convergence for $T\rightarrow 0$, we can without loss of generality assume $T\in(0,1/2]$ for which the above admits an integrable upper bound by \cref{lemma:U_linear_growth} together with the fact that $\int \exp\left(- M^t_G(x)\right)\dd x<\infty$ as elaborated in \cref{remark:ass}, \cref{rmk_ass_proper}. Thus, we have proven \eqref{eq:tv_conv1}. Let us now denote the respective partition functions as
    \[
    Z(T) = \int_{\R^d} \exp\left(-G^t_T(x)\right)\dd x, \quad Z = \int_{\R^d} \exp\left(-M^t_G(x)\right)\dd x.
    \]
    We find that
    \begin{equation}
        \begin{aligned}
            \|\pi^t_T - \pi^t\|_{\mathrm{TV}} &= \int_{\R^d} \left| \frac{1}{Z(T)}\exp\left(-G^t_T(x)\right) - \frac{1}{Z}\exp\left(-M^t_G(x)\right) \right|\dd x \\
            &= \frac{1}{Z} \int_{\R^d} \left| \exp\left(-G^t_T(x)\right) - \exp\left(-M^t_G(x)\right) \right|\dd x \\
            &\qquad + \int_{\R^d} \left| \frac{1}{Z(T)} - \frac{1}{Z}\right|\exp\left(-G^t_T(x)\right)\dd x\\
            &= \frac{1}{Z} \int_{\R^d} \left| \exp\left(-G^t_T(x)\right) - \exp\left(-M^t_G(x)\right) \right|\dd x + 
            \frac{\left|Z(T)-Z\right|}{Z}\\
        \end{aligned}
    \end{equation}
    which tends to zero as $T\rightarrow 0$ by \eqref{eq:tv_conv1} as shown in the first part of the proof.
\end{proof}

\begin{proposition}\label{prop:diffusion_gradient}
    The \gls{daz} score is obtained as the zero-temperature limit of the diffusion score.
    That is, $\nabla G^t_T(x)\rightarrow \nabla M_G^t(x)$, as $T\rightarrow 0^+$ for any $x\in\R^d$.
    In addition, if $G$ is differentiable and $\nabla G$ is locally Lipschitz continuous, the convergence is uniform on compact sets.
\end{proposition}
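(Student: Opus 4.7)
My plan is to first compute $\nabla G^t_T$ explicitly by differentiating under the integral sign, which is justified via dominated convergence using the Gaussian decay of the integrand in $y$. This yields
\[
    \nabla G^t_T(x) = \tfrac{1}{t}\bigl(x - m_T(x)\bigr), \qquad m_T(x) \coloneqq \frac{\int_{\R^d} y\,\exp(-\Phi(y)/T)\,\dd y}{\int_{\R^d}\exp(-\Phi(y)/T)\,\dd y},
\]
with $\Phi(y) \coloneqq G(y) + \tfrac{1}{2t}\|x-y\|^2$. Combined with the identity $\nabla M^t_G(x) = \tfrac{1}{t}\bigl(x - \prox_{tG}(x)\bigr)$ from \Cref{lemma:Moreau_diffble_small_t} (or \Cref{Moreau_diffble_outside} for large $t$ and $x$ outside the exceptional ball), the claim reduces to proving $m_T(x) \to \prox_{tG}(x)$ as $T \to 0^+$.

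This is a Laplace-type concentration argument. Let $p \coloneqq \prox_{tG}(x)$ and $\Phi^\ast \coloneqq \Phi(p) = M^t_G(x)$. By continuity of $G$, for any $\epsilon > 0$ there exists $r > 0$ with $\Phi(y) \leq \Phi^\ast + \epsilon$ on $\Ball{r}{p}$, giving the lower bound
\[
    \int_{\R^d}\exp(-\Phi(y)/T)\,\dd y \geq |\Ball{r}{p}|\,\exp\bigl(-(\Phi^\ast+\epsilon)/T\bigr).
\]
By coercivity of $\Phi$---obtained from \Cref{lemma:U_linear_growth} and convexity of $G$ outside a ball (\Cref{ass:convex outside ball})---there exists $\alpha > 0$ such that $\Phi(y) - \Phi^\ast \geq \alpha\|y-p\|^2$ for $y$ outside a sufficiently large ball. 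The splitting $m_T(x) = \int_{\Ball{\delta}{p}} y\, p_T(y\mid x)\,\dd y + \int_{\R^d \setminus \Ball{\delta}{p}} y\, p_T(y\mid x)\,\dd y$ shows that the first piece tends to $p$ as $\delta \to 0$ (the normalised restriction of $p_T$ to $\Ball{\delta}{p}$ is supported in a $\delta$-neighbourhood of $p$), while the second is controlled by dominating $\|y\|\exp(-\Phi(y)/T)$ by a $T$-independent integrable function multiplied by the exponential prefactor $\exp(-\alpha\delta^2/(4T))$; here the integrability assumption \Cref{ass:phi integrable} and the linear growth of $U^{\tmax}$ underlie the required tail decay.

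For the uniform-on-compacts statement under the additional hypothesis that $\nabla G$ is locally Lipschitz continuous: on a compact $K \subset \R^d$ the map $x \mapsto \prox_{tG}(x)$ is continuous, so the image $\{p_x \mid x \in K\}$ lies in a larger compact set $\tilde K$ on which $\nabla G$ is $L$-Lipschitz. The descent lemma then provides a uniform quadratic upper bound $\Phi_x(y) \leq \Phi_x^\ast + \tfrac{1}{2}(L + 1/t)\|y - p_x\|^2$ on a uniform neighbourhood of $p_x$, while the growth-based lower bound remains uniform in $x \in K$. All constants appearing in the Laplace estimate of the previous paragraph can thus be chosen independently of $x$, giving $\sup_{x \in K}\|m_T(x) - p_x\| \to 0$ and hence the uniform convergence of the gradients.

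The main obstacle is the moment (rather than probability-mass) control of the tail integral over $\R^d \setminus \Ball{\delta}{p}$: without global strong convexity of $\Phi$ one cannot directly invoke a single quadratic lower bound, and one must piece together the convexity of $G$ outside a ball with the integrability hypothesis of \Cref{ass:phi integrable} to produce a $T$-independent integrable envelope that still carries a sufficiently strong exponential prefactor in $T$.
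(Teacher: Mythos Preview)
Your overall strategy coincides with the paper's: differentiate under the integral to get the Tweedie-type identity $\nabla G^t_T(x)=\tfrac{1}{t}(x-m_T(x))$, reduce to $m_T(x)\to p=\prox_{tG}(x)$, lower-bound the partition function via a sublevel set of $\Phi$, and handle the uniform-on-compacts claim through the descent lemma (this last part matches the paper exactly).

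The divergence is in the tail estimate, and there you have a genuine gap. You try to obtain $\Phi(y)-\Phi^*\geq\alpha\|y-p\|^2$ only \emph{outside a large ball}, via coercivity and convexity outside a ball (\Cref{ass:convex outside ball}), then invoke \Cref{ass:phi integrable} for the rest. This leaves the intermediate annulus $\{y:\delta\leq\|y-p\|\leq R\}$ uncontrolled, and you yourself flag it as ``the main obstacle'' without resolving it. Moreover, \Cref{ass:phi integrable} concerns $\exp(-U^{\tmax})$, not the conditional density $\rho_T(\,\cdot\,|x)$; it does not obviously produce the $T$-dependent exponential prefactor you need. The paper sidesteps all of this by using the weak-convexity hypothesis you have overlooked (\Cref{ass:g weakly convex}): for $t<1/\ModulusWeakConvexity$ the map $y\mapsto G(y)+\tfrac{1}{2t}\|x-y\|^2$ is \emph{globally} strongly convex, so $\Phi(y)-\Phi^*\geq C\|y-p\|^2$ holds for \emph{all} $y$, and the integral $\int_{\BallC{\nu}{p}}\|y-p\|\exp(-C\|y-p\|^2/T)\,\dd y$ against the denominator bound $|\Omega_\epsilon|\exp(-(\Phi^*+\epsilon)/T)$ goes to zero directly once $\epsilon<C\nu^2$. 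No splitting into near-field/intermediate/far-field is required.

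Your route can be salvaged (on the bounded annulus use compactness and uniqueness of the minimizer to get a uniform gap $\Phi-\Phi^*\geq\eta>0$), but the paper's use of weak convexity is both simpler and already available under \Cref{ass}.
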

\begin{proof}
    Using Lebesgue's dominated convergence theorem to swap integration and differentiation we obtain that
    \begin{equation*}
        \begin{aligned}
            \frac{\partial}{\partial x_i} \int_{\R^d} \exp\left(\frac{-G(y) - \frac{1}{2t}\|x-y\|^2}{T}\right)\,\dd y = -\int_{\R^d} \frac{1}{Tt}(x_i-y_i)\exp\left(\frac{-G(y) - \frac{1}{2t}\|x-y\|^2}{T}\right)\,\dd y.
        \end{aligned}
    \end{equation*}
    By the chain rule it follows that
    \begin{equation}
        \begin{aligned}
            \nabla G^t_T(x) &=
            \int_{\R^d} \frac{1}{t}(x-y)\underbrace{\left( \frac{\exp\left(\frac{-G(y) - \frac{1}{2t}\|x-y\|^2}{T}\right)}{\int_{\R^d} \exp\left(\frac{-G(z) - \frac{1}{2t}\|x-z\|^2}{T}\right)dz}\right)}_{\eqqcolon\rho_T(y|x)}\,\dd y\\
            &= \tfrac{1}{t}(x-\E_{Y\sim\rho_T(y|x)}[Y])
        \end{aligned}
    \end{equation}
    Denoting $p={\prox}_{tG}(x)$ it follows for the difference between the \gls{daz} score and the diffusion score that for any \( \nu > 0 \)
    \begin{equation}\label{eq:diffusion_gradient}
        \begin{aligned}
            \|\nabla M^t_G(x) - \nabla G^t_T(x)\| &\leq
            \int_{\R^d} \tfrac{1}{t}\|y-p\|\rho_T(y|x)\,\dd y\\
            &= \int_{\Ball{\nu}{p}} \tfrac{1}{t}\|y-p\|\rho_T(y|x)\,\dd y + \int_{\BallC{\nu}{p}} \tfrac{1}{t}\|y-p\|\rho_T(y|x)\,\dd y\\
            &\leq \frac{\nu}{t} + \int_{\BallC{\nu}{p}} \tfrac{1}{t}\|y-p\|\rho_T(y|x)\,\dd y.
        \end{aligned}
    \end{equation}
    To show that the above tends to zero we have to show that the density $\rho_T(y|x)$ approaches a Dirac measure concentrated at ${\prox}_{tG}(x)$ \emph{quickly enough} as $T\rightarrow 0$.
    For a small enough $t>0$, weak convexity of $G$ implies that $y\mapsto G(y) + \frac{1}{2t}\|x-y\|^2$ is strongly convex and, consequently, that there exists a constant $C>0$ such that $y\mapsto G(y) + \frac{1}{2t}\|x-y\|^2 - M^t_G(x)\geq C\|y-p\|^2$.
    As mentioned before, for any $\epsilon>0$ the set $\Omega_\epsilon(x)\coloneqq \left\{ y\in\R^d\;\middle|\; G(y) + \frac{1}{2t}\|x-y\|^2\leq M_G^t(x) + \epsilon\right\}$ has positive Lebesgue measure and we can compute that
    \begin{equation}
        \begin{aligned}
            \frac{\int_{\BallC{\nu}{p}}\|y-p\|\exp\left(\frac{-G(y) - \frac{1}{2t}\|x-y\|^2}{T}\right)\,\dd y}{\int_{\R^d}\exp\left(\frac{-G(y) - \frac{1}{2t}\|x-y\|}{T}\right)\,\dd y}
            &\leq \frac{\int_{\BallC{\nu}{p}}\exp\left(\frac{-C\|y-p\|^2-M_G^t(x)}{T}\right)\,\dd y}{|\Omega_\epsilon(x)|\exp(-\frac{M_G^t(x)+\epsilon}{T})}\\
            &= \frac{1}{|\Omega_\epsilon(x)|}\int_{\BallC{\nu}{p}}\|y-p\|\exp\left(\frac{-C\|y-p\|^2+\epsilon}{T}\right)\,\dd y
        \end{aligned}
    \end{equation}
    Picking $\epsilon<C\nu^2$ and using Lebesgue's dominated convergence theorem we find that as $T\rightarrow 0$ the above integral tends to zero for any $\nu>0$.
    Therefore by \eqref{eq:diffusion_gradient},
    \[
        \limsup_{T\rightarrow 0}|\nabla M^t_G(x) - \nabla G^t_T(x)|\leq \frac{\nu}{t}.
    \]
    Since $\nu>0$ was arbitrary, this concludes the proof. Uniform convergence on compact sets in the case that $\nabla G$ is locally Lipschitz is obtained by the same arguments as above.
\end{proof}
Within the above proof we have made use of the well-known Tweedie formula $\nabla G^t_T(x) =\tfrac{1}{t}(x-\E_{Y\sim\rho_T(y|x)}[Y])$, which relates the gradient of the potential with the MMSE for denoising. By viewing the proximal operator as a MAP denoiser, the result effectively states that the \emph{MMSE-Tweedie} formula converges to the Moreau gradient formula $\nabla M^t_G(x) =\tfrac{1}{t}(x-\prox_{tG}(x))$, which constitutes a \emph{MAP-version} of Tweedie.


\section{Numerical Experiments}%
\label{sec:experiments}
In this section, we demonstrate the advantages of the proposed sampling algorithm over current algorithms on an extensive set of numerical experiments.
We begin with  a one-dimensional example that allows for the efficient computation of error metrics between the sample distribution and the reference distribution.
Then, we consider the high-dimensional examples of \gls{tv} prior sampling, \gls{tv}-L2 denoising on a chain, \gls{tv}-L2 denoising on images, sampling from the \gls{tdv} prior \cite{kobler2020total}, and \gls{mri} reconstruction using the energy-based prior proposed in \cite{zach2023stable}.
In all experiments, we compare several different sampling methods:
\gls{ula} \cite{roberts1996exponential}, \gls{myula} \cite{durmus2018efficient}, \gls{rock} \cite{pereyra2020skrock}, \gls{ula} with decreasing step sizes which we refer to as \gls{ald}\footnote{%
    This constitutes a small abuse of terminology since the potential remains unchanged as the step size changes, which is contrary to the use of this terminology in the early papers on diffusion models.
}, the proposed sampling algorithm \gls{daz}, and \gls{daz}-\gls{rock}.
In the latter, we use \gls{rock} instead of basic \gls{ula} for the inner loop in \cref{algo}.
We use subgradient steps in the updates of \gls{ula} and \gls{ald} when the potential is nondifferentiable.

The \MoreauName{} parameters and step sizes will always be chosen as follows:
The sequence of \MoreauName{} parameters $(t_n)_{N \geq n\geq 1}$ is determined by the specification of the endpoints \( t_N \) and \( t_1 \) and the loglinear computation of the intermediate values as
\begin{equation}
    t_n = 10^{\frac{n-1}{N-1}\log_{10}\frac{t_N}{t_1} + \log_{10}(t_1)}, \quad n=1,\dots, N.
\end{equation}
The endpoints are specified in the respective section of each experiment.
The sequence of step sizes $(\tau_n)_n$ is derived from $(t_n)_{N \geq n\geq 1}$ such that it satisfies the step size requirements discussed in \Cref{rmk:step_size}.
For \gls{ula}, we use the final step size $\tau_1$ across all iterations.
For \gls{myula}, we use the final Moreau parameter and step size, $t_1$ and $\tau_1$, respectively, across all iterations.
For \gls{rock} we fix the parameters $\eta=0.05$ and $s=5$ \cite[Algorithm 3.1]{pereyra2020skrock}. The Moreau parameter for \gls{rock} is chosen as the final (smallest) Moreau parameter $t_1$ with the step size for \gls{rock} as $0.9\times \delta_s^\mathrm{max}$ with $\delta_s^\mathrm{max}$ according to \cite[Algorithm 3.1]{pereyra2020skrock}.
For \gls{ald}, we use the same step sizes for each iteration as for \gls{daz}.
However, we want to emphasize that \gls{ald} has the same update rule as \gls{ula} directly applied to $\pi\propto\exp\left(-U(x)\right)$ and the step size requirements for ergodicity of \gls{ula} are stricter than those of \gls{daz} (\cf~\cref{rmk:step_size}). Therefore, especially at the beginning of the iterations where the step sizes are largest it might be the case that \gls{ald} violates these step-size requirements.
For \gls{daz}-\gls{rock} we use the same Moreau parameter scheme as for \gls{daz} and choose again always the largest feasible step size according to \cite[Algorithm 3.1]{pereyra2020skrock}. For methods involving \gls{rock} we count each update of the method as $s=5$ iterations in order to provide a fair comparison.\footnote{The parameter $s$ in \gls{rock} constitutes the order of the scheme so that one update within \gls{rock} is comparable to $s$ (sub-)gradient or proximal updates.}

In all experiments, we denote the reference distribution with \( \pi \) and the sample distributions after \( k \) iterations of the various methods with \( \pi_k \).
More specifically, in each experiment we run many parallel Markov chains and at any iteration \( k \), \( \pi_k \) is the sample distribution across all those parallel chains.
For the experiments in \Cref{sec:GM,sec:TV_chain,sec:TV_image}, we simulate \num{1000} parallel chains. For the \gls{tv} prior experiment (\Cref{sec:tv_marginal}), we simulate \num{100000} parallel chains, and for the experiment that involves costly neural-network evaluations (\Cref{sec:MRI}), we simulate \num{500} parallel chains due to computational limitations.
For the prior sampling experiment in \Cref{sec:TDV} we only compute a single Markov chain since we do not compute any statistics.
We define one iteration as one update step of \Cref{alg:update} in \Cref{algo}, \ie, one evaluation of the gradient of the Moreau envelope.
Hence, this does not account for any inner iterations when the proximal map of \( G \) is computed iteratively.
When the experiments involve nonconvex potentials, which is the case in \Cref{sec:TDV,sec:MRI}, the computations of the proximal maps might yield local minima.

To cover a range of design choices that the proposed sampling algorithm allows, we choose the number of iterations per \MoreauName{} level as \( K = 1 \) (which resembles diffusion models) in \Cref{sec:tv_marginal}, \(K=20\) in \Cref{sec:GM,sec:TV_chain,sec:TV_image}, and \( K = 200 \) (which resembles annealed Langevin sampling as in \cite{song2019generative}) in \Cref{sec:TDV,sec:MRI}.
\subsection{One-dimensional Gaussian mixture}%
\label{sec:GM}
We consider a multi-modal (and, consequently, not log-concave) one-dimensional Gaussian mixture $\pi = \sum_{i=1}^4 w_i \Nc( \mu_i,\sigma_i)$.
The weights and parameters of the individual Gaussians are chosen as $w = (0.2,0.2,0.3,0.3)$, $\mu = (-2,-1,1,2)$, and $\sigma = (0.05,0.25,0.25, 0.1)$. (These parameters were also used to produce \Cref{fig:moreau_gm}.)
This potential satisfies \Cref{ass} and is superexponential and, consequently, the presented theory applies.
In \Cref{fig:mixture_potentials} we show the convergence of the sample distribution in the \gls{tv} distance.
We choose \( N = 50 \) \MoreauName{} parameters with endpoints \( t_1 = \num{1e-4} \) and \( t_N = \num{1e-2} \), each of which is used \( K = 20 \) times in the inner loop in \Cref{algo}.
The step  size for \gls{daz} is chosen as $\tau_n=\frac{t_n}{2}$ based on \eqref{eq:step_size}.
To showcase the influence of the initialization, the experiment is done once with $\pi_0 = \Nc(0,1)$ and once with $\pi_0 = \delta_0$.
The samples obtained from the Markov chains are then compared to the discretized target distribution in TV distance.
To facilitate the interpretation of the results and to remove the influence of the finite sample and the discretization of the target distribution, we also plot the \gls{tv} distance of a ground truth sample of the same finite size. (Note that the Gaussian mixture allows for easy direct sampling.)

The results in \Cref{fig:mixture_potentials} demonstrate that the \gls{daz} sample converges to the reference sample significantly faster than the other methods, which is due to the the convexification of the \MoreauName{} envelope combined with the possibility of using larger steps.
Indeed, for this experiment our method showed the greatest improvement over the others, which we hypothesize is due to the nonconvex potential.
Combining DAZ with \gls{rock} leads to an additional speed increase at the beginning of the iterations. Regarding the slow convergence of \gls{ula} and \gls{myula} we want to point out that for these methods which use only the smallest step size the trade-off between bias and convergence speed is especially crucial. While it might be possible to increase the convergence speed by using larger step sizes at the cost of an increased bias, we decided to stick to the chosen step-sizes as \gls{rock}---a comparison method that also uses only the smallest Moreau parameter---converges rather quickly.
The remaining error of the reference sample (denoted as GT in \Cref{fig:mixture_potentials}) of approximately \num{0.4} in \gls{tv} to the reference distribution is a consequence of the finite number of chains and the computation of the \gls{tv} distance rather than the sampling method.

\begin{figure}
    \begin{tikzpicture}[baseline] 
    	\begin{axis}[width=0.54\textwidth,title={Initial distribution $\Nc(0,1)$.}, cycle list name=plotcycle, xlabel={Iteration $k$}, ylabel={$\| \pi_k - \pi \|_{\mathrm{TV}}$},ymax=2.1,ymin=0.2, /tikz/line join=bevel,]
    		\foreach \yy in {ULA,Myula,skrock,annealing,daz,daz_skrock,gt} {
    			\addplot table [x=iter, y=\yy, col sep=comma] {images/mixture/TV_error_gmm_rand_init.csv};
    		}
    	\end{axis}
\end{tikzpicture}
    \hfill
    \begin{tikzpicture}[baseline]
        \begin{axis}[width=0.54\textwidth,title={Initial distribution $\delta_0$.}, cycle list name=plotcycle, xlabel={Iteration $k$},ymax=2.1,ymin=0.2,yticklabel={\empty}, /tikz/line join=bevel,]
    		\foreach \yy in {ULA,Myula,skrock,annealing,daz,daz_skrock,gt} {
    			\addplot table [x=iter, y=\yy, col sep=comma] {images/mixture/TV_error_gmm_zero_init.csv};
    		}
    		\legend{ULA, MYULA, SK-ROCK, ALD, DAZ, DAZ-SK-ROCK, GT}
    	\end{axis}
\end{tikzpicture}
    \caption{%
        \Gls{tv} distance between the sample distribution and the target Gaussian mixture.
        Left: Initializing the chains with a standard normal distribution.
        Right: Initializing with a Dirac distribution concentrated at zero. GT denotes the \gls{tv} error of a random ground truth sample of the same size as the number of simulated chains. \gls{daz} converges fastest with additional acceleration by combining it with \gls{rock}. 
    }
\label{fig:mixture_potentials}
\end{figure}

\subsection{TV prior sampling}\label{sec:tv_marginal}
In this experiment, we sample from a multi-dimensional ($d=10$) distribution with potential $U = G \colon \R^d \to \R$ where $G(x) = \sum_{i=1}^{d-1} |x_{i+1} - x_i|$, which is the standard \gls{tv} functional.
The distribution of this potential is not integrable over \( \R^d \), but it is integrable over the quotient space \( \R^d / \{ t \cdot 1 \in \R^d \mid t \in \R \} \).
To account for this in our implementation, we project onto this linear subspace in each iteration by subtracting the mean of the iterate.\footnote{%
    This experiment is not covered by the theory presented in the paper for two reasons.
    First, our theoretical results do not cover projections.
    However, we believe that this particular projection is unproblematic since the subgradient of TV and, consequently, its proximal map have zero mean when the input has zero mean.
    In addition, the Gaussian random vector added in each iteration has zero mean in expectation.
    Thus, the projection will be close to the identity in practice.
    Second, the potential is not superexponential.
    Both of these issues can be avoided by compensating for the non-trivial kernel of TV via considering a regularized version of the potential $U + \frac \delta 2 \|\cdot\|^2, 0<\delta \ll1$ as is done in the two subsequent experiments. We decided to stick to the projection instead in this experiment in order not to modify the potential and since the solution of adding a squared term is covered in the subsequent experiments anyway.
}

We initialize all chains with samples from $\Nc(0,0.1\cdot I), I\in \R^{d \times d}$ and simulate them for \( N =\num{1000} \) Moreau levels with endpoints $t_1 = \num{2e-4}$ and $t_N = \num{1e-1}$, each of which is used \( K = 1 \) times.
In accordance to~\eqref{eq:step_size} we set the step size to $\tau_n = \frac {t_n} {2}$.
The proximal map of $G$ is computed with the efficient method proposed in~\cite{pock2016total} that is based on dynamic programming.

The high dimensionality of this problem prohibits the computation of \( \pi \) and any distance to it.
Moreover, since the density is only well-defined on a subspace of $\R^d$ also the computation of distances to marginals of the density is not straight forward (contrary to the subsequent TV experiments).
However, by the transformation theorem for integrals, if $X\in \R^d / \{ t \cdot 1 \in \R^d \mid t \in \R \} $ follows the distribution $\frac{1}{Z}\exp(-U(x))$, then the finite differences $X_{i+1}-X_i$ for $i=1, \dotsc, d - 1$ follow a Laplace distribution.
Thus, we can compare the distribution of the finite differences of the samples of the various sampling algorithms at any iteration \( k \), that we denote with \( \Pi_{(i+1) \to i}(\pi_k) \)\footnote{%
    We used \texttt{numpy.histogram} with options \texttt{bins="auto"} and \texttt{density=True} to generate histograms based on the samples (\cf~\url{https://numpy.org/doc/stable/reference/generated/numpy.histogram.html}).
} for $i=1, \dotsc, d - 1$ to the known reference distribution of these finite differences, that we denote with  \( \Pi(\pi) \propto \exp(-|\,\cdot\,|) \). (The reference distribution of the finite differences is independent of the index.)
This evaluation gives us access to \num{9} marginal distributions that we could compare.
For plotting we pick only three representative marginals, namely those with the smallest, median, and largest \gls{tv} error at the end of the iterations\footnote{As the empirical \gls{tv} error oscillates significantly, determining the three representative marginals using only the \gls{tv} error at the final iteration might lead to skewed results. Thus, we compute for each marginal the average \gls{tv} error across the last \num{50} iterations. The obtained averages are used to find the three representative marginals.}.
The results are shown in~\Cref{fig:tv_prior_tv_distance}, where we find that \gls{daz} and \gls{ald} converge significantly faster than \gls{ula} and \gls{myula}.
\gls{rock} gives mediocre performance while the combination of \gls{daz} and \gls{rock} works very well.
We hypothesize that \gls{ald} being on par with our method can be attributed to the convexity of $U$.
The plots in~\Cref{fig:tv_prior_marginals} visualize the negative-log histograms of the three representative marginal distribution obtained from the final steps of the simulated chains (\ie, $-\log(\Pi_{(i+1)\to i}(\pi_{1000}))$ for the three representative \( i\)'s) for the different algorithms.
Again, we find that with \gls{daz}, \gls{ald}, and \gls{daz}-\gls{rock} the ground truth potential is approximated accurately.




\begin{figure}
    \begin{tikzpicture}[baseline]
    \begin{axis}[
        width=0.37\textwidth,
        title={Median},cycle list name=plotcycle, /tikz/line join=bevel, xlabel={Iteration $k$}, ylabel={$\| \Pi_{(i+1) \to i}(\pi_k) - \Pi(\pi) \|_{\mathrm{TV}}$}, ymin=0,ymax=0.5]
        \addplot+ table [x=Iterations, y=0.5, col sep=comma] {images/TV_prior/ula/tv_distance.csv};
        \addplot+ table [x=Iterations, y=0.5, col sep=comma] {images/TV_prior/myula/tv_distance.csv};
        \addplot+ table [x=Iterations, y=0.5, col sep=comma] {images/TV_prior/skrock/tv_distance.csv};
        \addplot+ table [x=Iterations, y=0.5, col sep=comma] {images/TV_prior/ald/tv_distance.csv};
        \addplot+ table [x=Iterations, y=0.5, col sep=comma] {images/TV_prior/daz/tv_distance.csv};
        \addplot+ table [x=Iterations, y=0.5, col sep=comma] {images/TV_prior/daz-skrock/tv_distance.csv};
    \end{axis}
\end{tikzpicture}
    \hfill
    \begin{tikzpicture}[baseline]
    \begin{axis}[
        width=0.37\textwidth,
        title={Maximum}, xlabel={Iteration $k$},cycle list name=plotcycle, /tikz/line join=bevel, yticklabel={\empty},ymin=0,ymax=0.5]
        \addplot+ table [x=Iterations, y=0.95, col sep=comma] {images/TV_prior/ula/tv_distance.csv};
        \addplot+ table [x=Iterations, y=0.95, col sep=comma] {images/TV_prior/myula/tv_distance.csv};
        \addplot+ table [x=Iterations, y=0.95, col sep=comma] {images/TV_prior/skrock/tv_distance.csv};
        \addplot+ table [x=Iterations, y=0.95, col sep=comma] {images/TV_prior/ald/tv_distance.csv};
        \addplot+ table [x=Iterations, y=0.95, col sep=comma] {images/TV_prior/daz/tv_distance.csv};
        \addplot+ table [x=Iterations, y=0.95, col sep=comma] {images/TV_prior/daz-skrock/tv_distance.csv};
    \end{axis}
\end{tikzpicture}
    \hfill
    \begin{tikzpicture}[baseline]
    \begin{axis}[
        width=0.37\textwidth,
        title={Minimum}, xlabel={Iteration $k$},cycle list name=plotcycle, /tikz/line join=bevel, yticklabel={\empty},ymin=0,ymax=0.5]
        \addplot+ table [x=Iterations, y=0.05, col sep=comma] {images/TV_prior/ula/tv_distance.csv};
        \addplot+ table [x=Iterations, y=0.05, col sep=comma] {images/TV_prior/myula/tv_distance.csv};
        \addplot+ table [x=Iterations, y=0.05, col sep=comma] {images/TV_prior/skrock/tv_distance.csv};
        \addplot+ table [x=Iterations, y=0.05, col sep=comma] {images/TV_prior/ald/tv_distance.csv};
        \addplot+ table [x=Iterations, y=0.05, col sep=comma] {images/TV_prior/daz/tv_distance.csv};
        \addplot+ table [x=Iterations, y=0.05, col sep=comma] {images/TV_prior/daz-skrock/tv_distance.csv};
        \legend{\tiny{ULA}, \tiny{MYULA}, \tiny{SK-ROCK}, \tiny{ALD}, \tiny{DAZ}, \tiny{DAZ-SK-ROCK}}
    \end{axis}
\end{tikzpicture}
    \caption{\Gls{tv} prior sampling. \Gls{tv} distance between three finite difference marginals and the known ground truth. \gls{daz} and \gls{ald} converge fastest, again, with additional acceleration by adding \gls{rock} in the inner loop of \gls{daz}. The good performance of \gls{ald} might be due to convexity of the potential.}
\label{fig:tv_prior_tv_distance}    
\end{figure}

\begin{figure}
    \begin{tikzpicture}[baseline]
    \begin{axis}[
        width=0.37\textwidth,
        title={Median}, xlabel={$(Dx)_j=x_{j+1}-x_j$},cycle list name=plotcycle, /tikz/line join=bevel, ylabel={$-\log(\Pi_{(j+1)\to j}(\pi_{1000}))$}, xmin=-4, xmax=4,ymin=-0.2,ymax=8]
        \addplot+ table [x=x, y=y, col sep=comma] {images/TV_prior/ula/marginals_5.csv};
        \addplot+ table [x=x, y=y, col sep=comma] {images/TV_prior/myula/marginals_5.csv};
        \addplot+ table [x=x, y=y, col sep=comma] {images/TV_prior/skrock/marginals_5.csv};
        \addplot+ table [x=x, y=y, col sep=comma] {images/TV_prior/ald/marginals_5.csv};
        \addplot+ table [x=x, y=y, col sep=comma] {images/TV_prior/daz/marginals_5.csv};
        \addplot+ table [x=x, y=y, col sep=comma] {images/TV_prior/daz-skrock/marginals_5.csv};
        \addplot+ table [x=x, y=y, col sep=comma] {images/TV_prior/daz-skrock/marginals_gt.csv};
    \end{axis}
\end{tikzpicture}
    \hfill
    \begin{tikzpicture}[baseline]
    \begin{axis}[
        width=0.37\textwidth,
        title={Maximum}, xlabel={$(Dx)_j=x_{j+1}-x_j$},cycle list name=plotcycle, /tikz/line join=bevel, xmin=-4, xmax=4, yticklabel={\empty},ymin=-0.2,ymax=8]
        \addplot+ table [x=x, y=y, col sep=comma] {images/TV_prior/ula/marginals_95.csv};
        \addplot+ table [x=x, y=y, col sep=comma] {images/TV_prior/myula/marginals_95.csv};
        \addplot+ table [x=x, y=y, col sep=comma] {images/TV_prior/skrock/marginals_95.csv};
        \addplot+ table [x=x, y=y, col sep=comma] {images/TV_prior/ald/marginals_95.csv};
        \addplot+ table [x=x, y=y, col sep=comma] {images/TV_prior/daz/marginals_95.csv};
        \addplot+ table [x=x, y=y, col sep=comma] {images/TV_prior/daz-skrock/marginals_95.csv};
        \addplot+ table [x=x, y=y, col sep=comma] {images/TV_prior/daz-skrock/marginals_gt.csv};
    \end{axis}
\end{tikzpicture}
    \hfill
    \begin{tikzpicture}[baseline]
    \begin{axis}[
        width=0.37\textwidth,
        title={Minimum}, xlabel={$(Dx)_j=x_{j+1}-x_j$},cycle list name=plotcycle, /tikz/line join=bevel, xmin=-4, xmax=4, yticklabel={\empty},ymin=-0.2,ymax=8]
        \addplot+ table [x=x, y=y, col sep=comma] {images/TV_prior/ula/marginals_05.csv};
        \addplot+ table [x=x, y=y, col sep=comma] {images/TV_prior/myula/marginals_05.csv};
        \addplot+ table [x=x, y=y, col sep=comma] {images/TV_prior/skrock/marginals_05.csv};
        \addplot+ table [x=x, y=y, col sep=comma] {images/TV_prior/ald/marginals_05.csv};
        \addplot+ table [x=x, y=y, col sep=comma] {images/TV_prior/daz/marginals_05.csv};
        \addplot+ table [x=x, y=y, col sep=comma] {images/TV_prior/daz-skrock/marginals_05.csv};
        \addplot+ table [x=x, y=y, col sep=comma] {images/TV_prior/daz-skrock/marginals_gt.csv};
    \end{axis}
\end{tikzpicture}
    \caption{\Gls{tv} difference marginals. The closer a method resembles the absolute value function, the better. We find that the samples obtained with \gls{daz}, \gls{ald}, and \gls{daz}-\gls{rock} approximate the target significantly more accurately.}
\label{fig:tv_prior_marginals} 
\end{figure}


\subsection{TV-L2 denoising on a chain}\label{sec:TV_chain}
The next example we consider is \gls{tv} denoising on a chain, where $F, G : \R^d \rightarrow \R$, $F(x) \coloneqq \frac{1}{2\sigma^2}\|x-y\|^2$ and $G(x) = \lambda\sum_{i=1}^{d-1} |x_{i+1}-x_i|$ with $d = 100$, $\sigma=0.1$, and $\lambda=30$.
To generate the data $y\in\R^d$ we first construct a piecewise constant vector $y^\dagger\in\R^d$ with values 
\begin{equation}
    y^\dagger_i= \begin{cases}
        -3\quad&\text{for } i=1,\dots,10\\
-1\quad&\text{for } i=11,\dots,30\\
3\quad&\text{for } i=31,\dots,35\\
2\quad&\text{for } i=36,\dots,75\\
0\quad&\text{else}.
    \end{cases}
\end{equation}
and compute $y = y^\dagger+\sigma z$ with $z\sim\Nc(0,I)$ and $I\in\R^{d\times d}$ the identity matrix.
The potential satisfies \Cref{ass} and is superexponential and, consequently, the theory presented in this paper applies.
The endpoints of the \MoreauName{} parameters are \( t_N = \num{1e-3} \) and \( t_1 = \num{1e-4} \), each of which is used \( K = 20 \) times, and again $\tau_n=\frac{t_n}{2}$.
We compute the proximal map of $\lambda G$ with the efficient method proposed in \cite{pock2016total} which is based on dynamic programming.

Like in the previous section, the high dimensionality makes the computation of the reference distribution and any distances to it prohibitively expensive.
However, estimates of the \( d \) reference marginal distributions---that we denote with \( \Pi_1(\pi), \dotsc, \Pi_d(\pi) \)---can be computed efficiently using \gls{bp} algorithms \cite{tapfre03,knobelreiter2020belief,narnhofer2022posterior}.
In turn, this enables the verification of the convergence to the reference distribution through the computation of the \gls{tv} distances between the (one-dimensional) marginals obtained from \gls{bp} and those obtained from the various sampling algorithms.
Similar to the previous section, we choose three representative marginals to visualize by estimating the averaging the \gls{tv} distance over the last \num{20} iterations and then choose those marginals that correspond to the \num{5}th, \num{50}th, and \num{95}th percentile.
The results are shown in \Cref{fig:tv_denoising_chain}.
We find that \gls{rock} and \gls{daz}-\gls{rock} perform best, closely followed by \gls{daz} and afterwards \gls{ald}. As mentioned above, we believe that the advantages of \gls{daz} are most prevalent in the nonconvex setting, so that in this experiment the acceleration by \gls{rock} is already at a similar level.
The double dip in the convergence (especially visible for \gls{ula} and \gls{myula}) might be a consequence of the fact that we measure the distance to $\pi$, whereas the chains target a biased version thereof.


In \cref{fig:typical_set} we plot for each method for one chain the value of the potential $(U(X_k))_k$. For convex potentials this value concentrates on a so-called \emph{typical set} close to $\E[U(X)]$\footnote{up to the discrepancy induced by the difference between $U$ and $U^t$} \cite{pereyra2017maximum,pereyra2020skrock} so that we can use the convergence speed of the sequence $(U(X_k))_k$ as a proxy for the convergence speed of the Markov chain itself. We find in this experiment that \gls{daz} and \gls{ald} converge fastest, closely followed by \gls{rock}. \Gls{daz}-\gls{rock} converges roughly at the same speed, however, including strong oscillations. \Gls{ula} and \gls{myula} converge significantly slower.

\subsection{TV-L2 denoising for images}\label{sec:TV_image}
For image denoising, $F, G : \R^{N \times M} \rightarrow \R$ with $F(x) \coloneqq \frac{1}{2\sigma^2}\|x - y\|^2$ again and $G(x) = \lambda\sum_{i,j} |(Dx)_{i,j,1}|+|(Dx)_{i,j,2}|$ is the anisotropic \gls{tv}, where $D : \R^{N \times M} \rightarrow \R^{N \times M \times 2}$ is a forward-finite-differences operator \cite[Section 6.1]{chambolle2011first}.
We set $N=M=200$, $\sigma=0.05$, and $\lambda=30$. The data $y$ was again computed as $y=y^\dagger + \sigma z$ with $z\in \R^{N\times M}$, $(z_{i,j})_{i,j}$ i.i.d, $z_{i,j}\sim\Nc(0,1)$ and $y^\dagger$ a $200\times 200$ crop of the ground truth watercastle image.
For the computation of the proximal map of $\lambda G$ we use again \cite{pock2016total}.
The endpoints of the \MoreauName{} parameters are \( t_N = \num{1e-3} \) and \( t_1 = \num{1e-5} \), each of which is used \( K = 20 \) times, and again $\tau_n = \frac{t_n}{2}$.

As in \Cref{sec:TV_chain}, we obtain \( M \times N \) reference marginal distributions using the \gls{bp} algorithm and compute the one-dimensional \gls{tv} distances to the sample distributions for each of them.
Again, as in \Cref{sec:TV_chain}, we plot in \Cref{fig:tv_denoising_image} the convergence of three representative marginals,
and in \cref{fig:typical_set} on the right the convergence to the typical set. We observe best convergence from \gls{daz}, \gls{ald}, and \gls{rock}. \Gls{daz} converges fastest on the medium marginal, on par with \gls{ald} on the slowest marginal, and \gls{rock} being fastest on the fastest converging marginal. In this experiment the combination \gls{daz}-\gls{rock} induces strong oscillations which, however, do not disrupt the overall convergence. We speculate that the combination of \gls{rock} with annealing, \ie, successive alteration of the target distribution, exhibits slightly instable behavior. 
The convergence to the typical set is in this experiment fastest for \gls{daz}, \gls{ald}, and \gls{rock}\footnote{Note that we focus here on the time until stationarity is reached, not reached the value of $U(x)$.}. The fact that for denoising on a chain the value of $U(x)$ increases during iterations, for images, however, it decreases is caused since for chains we initialized at zero and for images at the given noisy observation.

\begin{figure}
    \begin{tikzpicture}[baseline] 
        \begin{axis}[width=0.37\textwidth,title={50th percentile}, cycle list name=plotcycle, xlabel={Iteration \( k \)},ymin=0,ymax=2.2,ylabel={$\| \Pi_{i} (\pi_k) - \Pi_{i} (\pi)\|_{\mathrm{TV}}$}]
    		\foreach \yy in {ULA,Myula,skrock,annealing,daz,daz_skrock} {
    			\addplot table [x=iter, y=\yy, col sep=comma] {images/TV_chain/TV_error_q_05_TV_chain.csv};
    		}
    	\end{axis}
    \end{tikzpicture}
    \hfill
    \begin{tikzpicture}[baseline] 
        \begin{axis}[width=0.37\textwidth,title={95th percentile}, cycle list name=plotcycle, xlabel={Iteration \( k \)}, ymin=0,ymax=2.2,yticklabel={\empty}]
    		\foreach \yy in {ULA,Myula,skrock,annealing,daz,daz_skrock} {
    			\addplot table [x=iter, y=\yy, col sep=comma] {images/TV_chain/TV_error_q_095_TV_chain.csv};
    		}
    	\end{axis}
    \end{tikzpicture}
    \hfill
    \begin{tikzpicture}[baseline] 
        \begin{axis}[width=0.37\textwidth,title={5th percentile}, cycle list name=plotcycle, xlabel={Iteration \( k \)},ymin=0,ymax=2.2,yticklabel={\empty}]
    		\foreach \yy in {ULA,Myula,skrock,annealing,daz,daz_skrock} {
    			\addplot table [x=iter, y=\yy, col sep=comma] {images/TV_chain/TV_error_q_005_TV_chain.csv};
    		}
            \legend{\tiny{ULA}, \tiny{MYULA}, \tiny{SK-ROCK}, \tiny{ALD}, \tiny{DAZ}, \tiny{DAZ-SK-ROCK}}
    	\end{axis}
    \end{tikzpicture}
    \caption{\Gls{tv} denoising on a chain. \Gls{tv} distance between three different marginals of sample distribution and the target for TV-denoising. Fastest convergence by \gls{daz}, \gls{rock}, and \gls{daz}-\gls{rock}. The double dip behavior might be a consequence of the fact that we compare to the target distribution and not to the stationary measure of each chain.}
\label{fig:tv_denoising_chain}    
\end{figure}


\begin{figure}
    \begin{tikzpicture}[baseline] 
        \begin{axis}[width=0.37\textwidth,title={50th percentile}, cycle list name=plotcycle, xlabel={Iteration \( k \)},ymin=0,ymax=2.2,ylabel={$\| \Pi_{i} (\pi_k) - \Pi_{i} (\pi)\|_{\mathrm{TV}}$}]
    		\foreach \yy in {ULA,Myula,skrock,annealing,daz,daz_skrock} {
    			\addplot table [x=iter, y=\yy, col sep=comma] {images/TV_image/TV_error_q_05_TV_image.csv};
    		}
    	\end{axis}
    \end{tikzpicture}
    \hfill
    \begin{tikzpicture}[baseline] 
        \begin{axis}[width=0.37\textwidth,title={95th percentile}, cycle list name=plotcycle, xlabel={Iteration \( k \)}, ymin=0,ymax=2.2,yticklabel={\empty}]
    		\foreach \yy in {ULA,Myula,skrock,annealing,daz,daz_skrock} {
    			\addplot table [x=iter, y=\yy, col sep=comma] {images/TV_image/TV_error_q_095_TV_image.csv};
    		}
    	\end{axis}
    \end{tikzpicture}
    \hfill
    \begin{tikzpicture}[baseline] 
        \begin{axis}[width=0.37\textwidth,title={5th percentile}, cycle list name=plotcycle, xlabel={Iteration \( k \)},ymin=0,ymax=2.2,yticklabel={\empty}]
    		\foreach \yy in {ULA,Myula,skrock,annealing,daz,daz_skrock} {
    			\addplot table [x=iter, y=\yy, col sep=comma] {images/TV_image/TV_error_q_005_TV_image.csv};
    		}
            \legend{\tiny{ULA}, \tiny{MYULA}, \tiny{SK-ROCK}, \tiny{ALD}, \tiny{DAZ}, \tiny{DAZ-SK-ROCK}}
    	\end{axis}
    \end{tikzpicture}
    \caption{\Gls{tv} denoising on an image. \Gls{tv} distance between three different marginals of sample distribution and the target for TV-denoising. Fastest convergence for \gls{daz}, \gls{ald}, and \gls{rock}. \gls{daz}-\gls{rock} exhibits strong oscillations which, however, do not bother the overall convergence. The piecewise constant shape of \gls{rock} is due to the fact that for \gls{rock} we count one iteration per prox evaluation which is plotted by piecewise constant upsampling.}
\label{fig:tv_denoising_image}    
\end{figure}


\begin{figure}
    \begin{tikzpicture}[baseline] 
        \begin{axis}[width=0.5\textwidth,title={Chain}, cycle list name=plotcycle, xlabel={Iteration \( k \)},ylabel={$U(x)$}]
    		\foreach \yy in {ULA,Myula,skrock,annealing,daz,daz_skrock} {
    			\addplot table [x=iter, y=\yy, col sep=comma] {images/TV_chain/pot_vals_TV_chain.csv};
    		}
    	\end{axis}
    \end{tikzpicture}
    \hfill
    \begin{tikzpicture}[baseline] 
        \begin{axis}[width=0.5\textwidth,title={Image}, cycle list name=plotcycle, xlabel={Iteration \( k \)},ylabel={}]
    		\foreach \yy in {ULA,Myula,skrock,annealing,daz,daz_skrock} {
    			\addplot table [x=iter, y=\yy, col sep=comma] {images/TV_image/pot_vals_TV_image.csv};
    		}
    		\legend{ULA,Myula,SK-ROCK,ALD,DAZ,DAZ-SK-ROCK}
    	\end{axis}
    \end{tikzpicture}
    \caption{TV denoising convergence to the typical set. Left: denoising on a chain, right: denoising on images. Fastest convergence for \gls{daz}, \gls{ald}, and \gls{rock}. \gls{daz}-\gls{rock} exhibits strong oscillations which, however, do not bother the overall convergence. The piecewise constant shape of \gls{rock} is due to the fact that for \gls{rock} we count one iteration per prox evaluation which is plotted by piecewise constant upsampling.
    }
\label{fig:typical_set}
\end{figure}

\subsection{Sampling from a deep prior}%
\label{sec:TDV}
To showcase the applicability of the proposed sampling algorithm to nonconvex potentials in high-dimensional settings, we consider the \gls{tdv} potential introduced in~\cite{kobler2020total}. This potential is twice continuously differentiable and the second derivative is bounded and, consequently, the potential is Lipschitz continuous and weakly convex\footnote{Unfortunately, convexity outside a ball is difficult to verify for TDV and similar deep models.}. However, a naive usage of the potential does not meet the integrability requirements and instead we consider the potential $U(x) = \lambda \operatorname{TDV}(x) + \frac{1}{2 \gamma_1} \bigl(\tfrac{1}{d}\sum_{i=1}^d x_i - \mu\bigr)^2 + \frac{1}{2 \gamma_2}(\operatorname{var}[x] - \sigma^2)^2$ which quadratically penalizes the statistical deviations of a given patch to ensure integrability. In addition to the potential now being integrable we can also enforce the mean and variance of the samples staying close to $\mu$ and $\sigma^2$ which are chosen as the empirical mean and variance of the training dataset which were computed over \num{10000} patches of size $(96 \times 96)$ extracted from the BSDS~\cite{bsds} dataset.
For our experiment, we utilize the publicly available \gls{tdv} weights\footnote{See: \url{https://github.com/VLOGroup/tdv}} that were obtained from optimizing the denoising performance on perturbed BSDS500~\cite{bsds} grayscale image patches of size $96 \times 96$.

Throughout this experiment, we use $\lambda = \num{8}$, $\gamma_1 = \gamma_2 = \num{1e-8}$, and work on $d=256\times 256$-dimensional image patches.
We simulate $N=\num{5}$ \MoreauName{} parameters whose endpoints are are chosen as $t_1 = \num{1e-4}$ to $t_N = 5 t_1$ and each of which is used for $K=\num{200}$ Langevin steps, which results in a total of \num{1000} iterations.
The Langevin step size in each level is set to to $\tau_n = t_n /2$ in accordance with \Cref{rmk:step_size}.
The proximal operator of $G$ is computed with \gls{apgd}, which is given in \Cref{algo:apgd}, where we perform gradient steps on the potential and proximal steps on the proximity term.
The proximity term could also be handled through gradient steps, but we observed slightly faster convergence when splitting the terms.
All trajectories are initialized at the same $x_0 = \mu + \sigma z, \, z \sim \mathcal{N}(0, I), I \in \R^{d \times d}$.

The high dimensionality and the nonconvexity of the potential prohibits a quantitative analysis of the convergence.
Consequently, we only provide a qualitative comparison of the methods in \Cref{fig:tdv results}.
The results show that the chains of \gls{ald} and \gls{daz} reach a reasonable sample after around \num{200} iterations, whereas \gls{myula} and \gls{ula} require roughly \num{600} to \num{800} iterations to reach a result that is of similar quality.

In addition, we point out that we sometimes observed instabilities for \gls{ald} when the step sizes were too large.
This underpins the relevance of the proposed sampling algorithm from a stability and convergence speed perspective and supports the findings regarding possible step sizes in \cref{rmk:step_size}.

\begin{figure}
    \centering
	\def\imwidth{2.5cm}
	\def\ppad{.25cm}
	\begin{tikzpicture}
		\foreach [count=\imethod] \method/\anno in {ula/ULA, ald/ALD, myula/MYULA, daz/DAZ} {
			\pgfmathsetlengthmacro{\yyanno}{-(\imethod) * (1*\imwidth + \ppad)}
			\node [rotate=90] at (1.0cm, \yyanno) {\anno};
            \draw [gray, thick] (1.2cm, \yyanno + 1cm) -- ++(0,-2cm);
			\foreach [count=\iidx] \idx/\idxreadable in {00000/ Initialization, 00200/Iteration 200, 00400/400, 00600/600, 00800/800} {
				\pgfmathsetlengthmacro{\xx}{\iidx * (\imwidth + \ppad)}
				\ifthenelse{\imethod=1}{\node at (\xx, -1.25cm) {\idxreadable};}{}
				\pgfmathsetlengthmacro{\yymean}{-\imethod * (\imwidth + \ppad)}
				\node at (\xx, \yymean) {\includegraphics[width=\imwidth]{./images/TDV/\method/0-\idx.png}};
			}
		}
	\end{tikzpicture}
    \caption{%
        Exemplary trajectories obtained by various sampling algorithms applied to the \gls{tdv} potential.
        From top to bottom: \gls{ula}; \gls{ald}; \gls{myula}; \gls{daz};. From left to right: Iterations 0; 200; 400; 600; 800. \gls{ula} and \gls{myula} take significantly longer to reach a characteristic sample visually.
    }%
    \label{fig:tdv results}
\end{figure}

\subsection{Accelerated MRI reconstruction}\label{sec:MRI}
In this section we showcase the applicability of the proposed sampling algorithm to high-dimensional inverse problems with a nontrivial forward operator inspired by accelerated MRI.
We consider the problem of recovering a real-valued image from undersampled and noisy Fourier data and thus \( F, G : \R^{M\times N} \to \R \) with \( F(x) \coloneqq \tfrac{\lambda}{2} \| M\Fourier x - y \|_2^2 \), where \( \Fourier : \R^{M\times N} \to \mathbb{C}^{M \times (\lfloor N/2 \rfloor + 1)} \) is the two-dimensional Fourier transform that accounts for the conjugate symmetry of the spectrum of a real signal and \( M : \mathbb{C}^{ M \times (\lfloor N/2 \rfloor + 1)} \to \mathbb{C}^{n} \) is a frequency selection operator that models the undersampling that is used to accelerate imaging speed.
The function \( G = \operatorname{EBM} + \tfrac{1}{2\gamma} \|\,\cdot\,\|^2 \) incorporates the learned energy-based model \( \operatorname{EBM} \) from \cite{zach2023stable}.
This model is a cascade of convolution operators with stride two and point-wise leaky-rectified-linear activation functions, followed by a linear layer that maps to a scalar and finally the absolute value to ensure boundedness from below.
To ensure weak convexity of \( G \),\footnote{Again, convexity outside a ball is difficult to verify for deep models.} we replaced the leaky-rectified-linear activation functions \( x \mapsto \max(\alpha x, x) \) with \( \alpha = \num{0.05}\) with the surrogate \( x \mapsto \tfrac{1}{\beta}\log\bigl(\exp(\alpha \beta x) + \exp(\beta x)\bigr) \) with \( \beta = \num{1000} \).
This surrogate is infinitely often differentiable with bounded second derivative and, consequently, the potential is Lipschitz continuous and weakly convex.
In addition, the small quadratic norm with \( \gamma = \num{1e9} \) ensures the integrability of the Gibbs distribution.
The data \( y \in \mathbb{C}^n \) are constructed by \( y = M\Fourier x^\ast + \sigma z \) where \(  x^\ast \in \R^{320 \times 320} \) is the reference root-sum-of-squares reconstruction of the 17th slice in the file \texttt{file1000005.h5} in the \texttt{multicoil\_train} folder of the fastMRI knee dataset \cite{Knoll2020} and \( z \sim \mathcal{N}(0, I) \).
Like in the simulation studies in the original publication \cite{zach2023stable}, the intensity values of the reference root-sum-of-squares reconstruction were affinely mapped such that their minimum is zero and their maximum is one, such that the intensity values consistent with those in the training.
The standard deviation of the noise was chosen as \( \sigma = \num{2e-2} \) and we set \( \lambda = \num{1e5} \) based on manual search optimized on visually appealing reconstructions.

For the sampling algorithms, \( \tau_1 = \num{7.5e-3} \) is determined by the step size used in the \gls{ula} algorithm used in training the model.
We chose \( N = 5\), \( K = 200 \) and the endpoints or the sequence of Moreau parameters as \( t_1 = \frac{\tau_1}{\tau_1 - \tau\lambda} \) (to comply with step size restrictions) and \( t_N = 5t_1 \).
As in the previous section, we computed the proximal map of \( G \) using \gls{apgd} where we took gradient steps on \( G \) and proximal steps on the proximity term.
We ran \num{500} parallel chains all initialized at the naive reconstruction \( \Fourier^\ast M^\ast y \) that is shown in~\Cref{fig:mri data} along with the reference reconstruction and a visualization of the data.
\begin{figure}
    \centering
    \def\imwidth{3cm}
	\def\ppad{.5cm}
    \begin{tikzpicture}
    \foreach [count=\iim] \im/\anno in {reference-rss/x, data/ M^\ast\log(|y|), naive/\Fourier^\ast M^\ast y} {
        \pgfmathsetlengthmacro{\xx}{\iim * (\imwidth + \ppad)}
        \node at (\xx, 0) {\includegraphics[height=\imwidth]{./images/mri/\im.png}};
        \node at (\xx, 1.8) {\( \anno \)};
        \node at (\xx, 0 - 1.7cm) {\csvreader[no head]{./images/mri/\im_vals.txt}{}{\csvcoli\drawcolorbar\ \csvcolii}};
    }
    \end{tikzpicture}
    \caption{%
        From left to right:
        reference reconstruction; visualization of the data; least-squares reconstruction.%
    }%
    \label{fig:mri data}
\end{figure}

Like in the previous section, the high dimensionality and nonconvexity makes this an extremely challenging problem.
Due to the absence of any ground-truth samples we describe the results of the four sampling algorithms shown in \Cref{fig:mri results} again only qualitatively.
The marginal standard deviations obtained by the gradient-based samplers \gls{ula} and \gls{ald} are significantly more blurry than those obtained by the proximal-based samplers \gls{myula} and \gls{daz}.
In addition, \gls{ald} and \gls{daz} converge significantly faster than \gls{myula} and \gls{ula}.
This is examplified by the backfolding artifact indicated by the gray arrow in \Cref{fig:mri results}, that is clearly visible in the \gls{mmse} estimate even after \num{1000} iterations of \gls{myula} and \gls{ula}.
In contrast, it is almost fully removed after \num{600} iterations of \gls{daz}.
\begin{figure}
    \centering
	\def\imwidth{2.05cm}
	\def\ppad{.5cm}
	\begin{tikzpicture}
		\foreach [count=\imethod] \method/\anno in {ula/ULA, ald/ALD, myula/MYULA, daz/DAZ} {
			\pgfmathsetlengthmacro{\yyanno}{-(\imethod + .22) * (2*\imwidth + \ppad)}
			\node [rotate=90] at (1.1cm, \yyanno) {\anno};
            \draw [gray, thick] (1.35cm, \yyanno + 1.5cm) -- ++(0,-3cm);
			\foreach [count=\iidx] \idx/\idxreadable in {00200/Iteration 200, 00400/400, 00600/600, 00800/800, 01000/1000} {
				\pgfmathsetlengthmacro{\xx}{\iidx * (\imwidth + \ppad)}
				\ifthenelse{\imethod=1}{\node at (\xx, -3.3cm) {\idxreadable};}{}
				\pgfmathsetlengthmacro{\yymean}{-\imethod * (2*\imwidth + \ppad)}
				\pgfmathsetlengthmacro{\yystd}{-\imethod * (2*\imwidth + \ppad) - \imwidth}
				\node at (\xx, \yymean) {\includegraphics[width=\imwidth]{./images/mri/\method/mean/\idx.png}};
				\node at (\xx, \yystd) {\includegraphics[width=\imwidth]{./images/mri/\method/std/\idx.png}};
                \node at (\xx, \yystd - 1.2cm) {\csvreader[no head]{./images/mri/\method/std/\idx_vals.txt}{}{\fontsize{6pt}{12}\selectfont\pgfmathparse{\csvcoli*100}\pgfmathprintnumber{\pgfmathresult}\drawcolorbar\ \pgfmathparse{\csvcolii*100}\pgfmathprintnumber{\pgfmathresult}}};
			}
            \draw[-{Triangle[width=9pt,length=5pt]}, line width=5pt, draw=gray](8.95, \yyanno+2.4cm) -- ++(-.5, -.5);
		}
	\end{tikzpicture}
    \caption{%
        \Gls{mmse} and standard deviation estimates obtained by various sampling algorithms.
        From top to bottom: \gls{ula}; \gls{ald}; \gls{myula}; \gls{daz}.
        For each algorithm, the \gls{mmse} estimate is shown on top of the variance estimate.
        From left to right: Iterations 200; 400; 600; 800; 1000.
        The colormap values are multiplied by \num{100} and apply only to the standard deviation.
        The gray arrow indicates a prominent backfolding artifact. \gls{ula} and \gls{ald} lead to blurrier results. \gls{ald} and \gls{daz} remove the artifact significantly faster.
    }%
    \label{fig:mri results}
\end{figure}
\section{Conclusion}
In this article, we proposed a method for the efficient sampling from Gibbs distributions $\pi$ whose potential may be nondifferentiable and nonconvex.
Inspired by diffusion models, we consider a sequence of distributions $\pi^t$ that has favorable properties for sampling when \( t \) is large and approaches the target \( \pi \) as $t\rightarrow 0$.
The sequence is obtained by replacing the parts of the potential with its \MoreauName{} envelope.
Within our approach we then successively sample from $\pi^t$ for decreasing values of $t$ by constructing a Markov chain from the corresponding Langevin diffusion.
We proved ergodicity of the chain for sampling from $\pi^t$ for fixed $t$ and convergence to the target density in \gls{tv} when $t$ is sufficiently small.
Moreover, we showed that the map $t \mapsto \pi^t$ is Lipschitz continuous in the \gls{tv} norm, which justifies the approach of successively sampling from $\pi^t$ for decreasing $t$.
In addition, we proved that all distributions $\pi^t$ can be understood as a zero-temperature limit of the Gibbs distributions corresponding to a variance exploding diffusion model for $\pi$.

An extensive set of numerical experiments that contains one-dimensional toy problems and high-dimensional Bayesian inverse problems in imaging confirmed the efficacy of the method compared to \gls{ula}, \gls{myula}, \gls{ald}, and \gls{rock}. 
In addition to the broad applicability of \gls{daz} to potentials of rather general form, the conducted experiments confirm that the proposed method yields a significant speedup particularly for nonconvex or not strongly convex potentials (\cref{sec:GM,sec:tv_marginal}). In the strongly convex case, \gls{daz} yields faster convergence than \gls{ula} or \gls{myula} due to the relaxed step size conditions and provides performance comparable to \gls{ald} and \gls{rock}.

\paragraph{Limitations and future work}
Within the sampling procedure, we successively sample from the distributions $\pi^t$ by initializing each Markov chain with the last iterate of the previous chain.
Contrary, \eg, in diffusion models, it is possible to discretize the backward SDE in order to obtain a theoretically well-founded time step from $\pi^{t+\Delta t}\rightarrow\pi^{t}$ where $\Delta t$ denotes the size of the time step.
Future work will investigate if, \eg, the sequence $(\pi^t)_t$ admits a governing transport equation that would allow a similar discretization of the time evolution $t \mapsto \pi^t$.

An additional direction for future work is to extend the proposed scheme to Bregman-Moreau envelopes similar to \cite{lau2022bregman} and the theoretical investigation of advanced sampling schemes like \gls{rock} as inner algorithms for \gls{daz}.

\bibliographystyle{siamplain}
\bibliography{references}

\appendix
\section{Proofs}
\subsection{Ergodicity of \texorpdfstring{\gls{daz}}{DAZ}}\label{sec:appendix_erg}
\subsubsection{Proof of \texorpdfstring{\Cref{lemma:prox_time_cont}}{Lemma 4.6}}\label{sec:prox_time_cont}
\begin{proof}
    In the following, we will frequently denote an element $p\in {\prox}_{\tau G}(x)$ as $p = p_\tau(x)$  for simplicity. If it is clear from context, we will also omit either the argument $x$ or the parameter $\tau$ and denote the prox as $p_\tau$, or simply $p$. We begin with the first assertion by letting $(x_n,t_n)\rightarrow (x,t)$ in $\R^d\times(0,\frac{1}{\ModulusWeakConvexity})$. By definition of the proximal map, we have that
    \[
        \frac{1}{2t_n}\|x_n-p_{t_n}(x_n)\|^2+G(p_{t_n}(x_n))\leq \frac{1}{2t_n}\|x_n\|^2+G(0).
    \]
    The right-hand side is bounded by convergence of $x_n\rightarrow x$, $t_n\rightarrow t$ and, thus, by boundedness from below of $G$, it follows that $(p_{t_n}(x_n))_n$ is bounded. Therefore, there exists a convergent subsequence $p_{t_n}(x_n)\rightarrow \hat{p}$, which we shall not relabel for the sake of a simpler notation. It follows for arbitrary $y\in\R^d$ that
    \begin{equation}
        \begin{aligned}
            \frac{1}{2t}\|x-\hat{p}\|^2+G(\hat{p}) &\underset{(i)}{=} \lim_{n\rightarrow\infty}\frac{1}{2t_n}\|x_n-p_{t_n}(x_n)\|^2+G(p_{t_n}(x_n)) \\
            &\underset{(ii)}{\leq} \lim_{n\rightarrow\infty}\frac{1}{2t_n}\|x_n-y\|^2+G(y)\\
            &= \frac{1}{2t}\|x-y\|^2+G(y)
        \end{aligned}
    \end{equation}
    where $(i)$ is a consequence of continuity and $(ii)$ follows from optimality of $p_{t_n}(x_n)$.
    Since $y$ was arbitrary we can conclude $\hat{p} \in \prox_{tG}(x)$, which is single-valued for $t<\frac{1}{\ModulusWeakConvexity}$.
    Convergence of the original sequence $(p_{t_n}(x_n))_n$ follows from a standard subsequence argument.
    
    Next, we consider the Lipschitz continuity for fixed $t$.
    The optimality condition of the proximal map implied that for any $z\in\R^d$ there exists a $v_z\in\partial G(p(z))$ such that $0 = \frac{1}{t}(p(z) - z) + v_z$. It follows
    \begin{equation}\label{eq:moreau_lipschitz}
        \begin{aligned}
            \|p(x)-p(y)\|^2 &= \langle p(x)-p(y), x-tv_x-(y-tv_y)\rangle\\
            &= \langle p(x)-p(y), x-y\rangle - t\langle p(x)-p(y), v_x-v_y\rangle.
        \end{aligned}
    \end{equation}
    As mentioned in \Cref{remark:ass}, the regular subgradient of the convex function $G + \frac{\ModulusWeakConvexity}{2}\|\cdot\|^2$ satisfies $\partial(G + \frac{\ModulusWeakConvexity}{2}\|\cdot\|^2)(x) = \partial G(x) + \ModulusWeakConvexity x$ (see \cite[10.10 Exercise]{rockafellar2009variational}). Therefore, it holds that
    \[
        0\leq \langle p(x)-p(y), (v_x + \ModulusWeakConvexity p(x)) - (v_y + \ModulusWeakConvexity p(y))\rangle = \langle p(x)-p(y), v_x - v_y\rangle + \ModulusWeakConvexity\|p(x)-p(y)\|^2.
    \]
    Plugging this into \eqref{eq:moreau_lipschitz} yields that
    \[
        \|p(x)-p(y)\|^2 \leq \langle p(x)-p(y), x-y\rangle + t\ModulusWeakConvexity\|p(x)-p(y)\|^2
    \]
    Thus, $\|p(x)-p(y)\|^2 \leq \frac{1}{1-\ModulusWeakConvexity t}\langle p(x)-p(y), x-y\rangle$.
    Applying the Cauchy-Schwartz inequality yields the desired result.
\end{proof}
\subsubsection{Proof of \texorpdfstring{\Cref{lemma:Moreau_diffble_small_t}}{Lemma 4.7}}\label{sec:Moreau_diffble_small_t}
\begin{proof}
    Let $v\in \R^d$ be arbitrary.
    By the definition of the proximal map, we find that
    \begin{equation}
        \begin{aligned}
            M^t_G(x+v)-M^t_G(x)&\leq \frac{1}{2t}\|x+v-p_t(x)\|^2 + G(p_t(x)) - \frac{1}{2t}\|x-p_t(x)\|^2 - G(p_t(x))\\
            &=\frac{1}{t}\langle v, x-p_t(x)\rangle + \frac{1}{2t} \|v\|^2.
        \end{aligned}
    \end{equation}
    Similarly, it holds true that $M^t_G(x+v)-M^t_G(x)\geq\frac{1}{t}\langle v, x-p_t(x+v)\rangle + \frac{1}{2t} \|v\|^2$. In summary, we have that
    \begin{equation}
        \begin{aligned}
            |M^t_G(x+v)-M^t_G(x) - \langle v, \tfrac{1}{t}(x-p_t(x))\rangle| & \leq \tfrac{1}{t} \|v\|\|p_t(x+v)-p_t(x)\| + \tfrac{1}{2t} \|v\|^2 \\
            &\leq \tfrac{1}{t}\|v\|(\tfrac{1}{1-\ModulusWeakConvexity t}\|v\| + \tfrac{1}{2}\|v\|) = o(\|v\|).
        \end{aligned}
    \end{equation}
\end{proof}
\subsubsection{Proof of \texorpdfstring{\Cref{lemma:prox_coercive}}{Lemma 4.9}}\label{sec:appendix_prox_coervice}
\begin{proof}
    By continuity, the set $\prox_{tG}(x)$ is closed and, consequently, the infimum is a minimum.
    Denote $\hat{p}_t(x) = \arg\min_{p\in \prox_{tG}(x)}\|p\|$.
    The optimality condition
    \[
        0\in \tfrac{1}{t}(\hat{p}_t(x)-x) + \partial G(\hat{p}_t(x))
    \]
    implies the existence of a $v_t(x)\in\partial G(\hat{p}_t(x))$ such that $\|x\|\leq \|\hat{p}_t(x)\| + t\|v_t(x)\|$.
    Now assume to the contrary, there exist sequences $R_n\rightarrow\infty$, $\|x_n\|\geq R_n$ with $\|\hat{p}_t(x_n)\|\leq C<\infty$ for all $n\in\N$. But then, the local boundedness of $\partial G$ (see \Cref{remark:ass}) implies a uniform bound on $\|v_t(x_n)\|$ and thus on $\|x_n\|$, which is a contradiction.
\end{proof}
\subsubsection{Proof of \texorpdfstring{\Cref{Moreau_diffble_outside}}{Corollary 4.10}}\label{sec:appendix_moreau_diffble_out}
\begin{proof}
    By \Cref{lemma:prox_coercive} there exists $R>0$ such that for \( x \in \BallC{R}{0} \), $\prox_{tG}(x) \subset \BallC{M}{0}$ where $\ConvexityRadius$ is the radius outside of which \( G \) is convex, see \Cref{ass:convex outside ball}.
    Thus, the map $p\mapsto\frac{1}{2t}\|x-p\|^2 + G(p)$ is strictly convex outside $\Ball{\ConvexityRadius}{0}$.
    Since all minimizers of this map are elements of $\BallC{\ConvexityRadius}{0}$, the minimizer has to be unique, that is, $\prox_{tG}(x)$ is single-valued.
    As in \Cref{lemma:prox_time_cont}, we obtain Lipschitz continuity of the proximal map, but using now convexity of $G$ outside the ball $\Ball{\ConvexityRadius}{0}$ instead of weak convexity.
    This also implies differentiability of $M^t_G$.
    Regarding the Lipschitz continuity of $\nabla M^t_G$, let us denote $q_z=z-p(z)$ for $z=x,y$, so that $\nabla M^t_G(z) = \frac{1}{t}q_z$.
    Moreover, by the definition of the prox $q_z\in t \partial G(p(z)) = t\partial G(z-q_z)$.
    As a consequence, since $p(z) \not\in \Ball{\ConvexityRadius}{0}$ so that convexity of $G$ outside a ball can be used, $\langle p(x)-p(y), q_x-q_y\rangle=\langle x-q_x - (y-q_y), q_x-q_y\rangle\geq 0$.
    It follows
    \begin{equation}
        \|q_x-q_y\|^2 = \langle x-y, q_x-q_y\rangle - \langle x-q_x - (y-q_y), q_x-q_y\rangle\leq \|x-y\|\|q_x-q_y\|.
    \end{equation}
    Therefore, $\nabla M^t_G$ is Lipschitz continuous with Lipschitz constant $\frac{1}{t}$.
\end{proof}
\subsubsection{Proof of \texorpdfstring{\Cref{cor:moreau_convex_outside}}{Corollary 4.11}}\label{sec:appendix_moreau_convex_outside}
\begin{proof}
    Let $R$ as in the proof of \Cref{Moreau_diffble_outside} be large enough such that $x \in \BallC{R}{0}$ implies that $\prox_{tG}(x) \in \BallC{\ConvexityRadius}{0}$.
    A simple computation then yields for $x,y \in \BallC{R}{0}$
    \begin{equation}
        \begin{aligned}
            M^t_G(\lambda x + (1-\lambda)y)& \leq \frac{1}{2t}\|\lambda p(x) + (1-\lambda)p(y) - (\lambda x + (1-\lambda)y)\|^2  + G(\lambda p(x) + (1-\lambda)p(y))\\
            &\leq \frac{\lambda}{2t}\| p(x) - x\|^2 + \lambda G(p(x))
            + \frac{1-\lambda}{2t}\|p(y) - y\|^2 + (1-\lambda) G(p(y))\\
            &= \lambda M^t_G(x) + (1-\lambda)M^t_G(y)
        \end{aligned}
    \end{equation}
    where the first inequality follows from definition of the \MoreauName{} envelope and the second inequality follows from from convexity of \( \|\,\cdot\,\| \) together with convexity of $G$ outside a ball.
\end{proof}
\subsubsection{Proof of \texorpdfstring{\Cref{lemma:superexp_M}}{Lemma 4.13}}\label{sec:appendix_superexp}
\begin{proof}
    First, we note that $x^*$ is also a minimizer of $M^t_G$.
    Let $R>0$ be such that $M^t_G(x)$ is differentiable at \( x \in \R^d \) that fulfills $\|x\|>R$.
    Pick $M_\rho'>0$ such that $M_\rho'\geq 2M_\rho$ and such that for all \( x \in \R^d \), $\|x-x^*\|\geq M_\rho'$ implies that $\| x \|> R$.
    Let $p=\prox_{tG}(x)$ and where \( x \in \R^d \) is such that  $\|x-x^*\|\geq M_\rho'$.
    We make a case distinction:
    When $\|p-x^*\|\leq \frac{\|x-x^*\|}{2}$, we can compute that
    \begin{equation}
        \begin{aligned}
            \langle \nabla M^t_G(x),x-x^*\rangle &=\langle \tfrac{1}{t}(x-x^*+x^*-p), x-x^*\rangle\\
            &\geq \tfrac{1}{t}(\|x-x^*\|^2-\|p-x^*\|\|x-x^*\|\\
            &\geq \tfrac{1}{2t}\|x-x^*\|^2.
        \end{aligned}
    \end{equation}
    Conversely, assume now $\|p-x^*\|> \frac{\|x-x^*\|}{2}\geq M_\rho$ and let $v\in\partial G(p)$ such that $0=\frac{1}{t}(p-x) + v$. It follows
    \begin{equation}
        \begin{aligned}
            \langle \nabla M^t_G(x),x-x^*\rangle &=\langle  v,x-x^*\rangle\\
            &=\langle v,p-x^*+t v\rangle\\
            &\geq \langle v,p-x^*\rangle \\
            &\underset{(*)}{\geq} \rho\|p-x^*\|^2\geq \frac{\rho}{4}\|x-x^*\|^2
        \end{aligned}
    \end{equation}
    where $(*)$ follows from $G$ being superexponential.
    In summary, we find that for \( x \in \R^d \) such that $\|x-x^*\|\geq M_\rho'$, it holds that
    \begin{equation}\label{eq:superexp_new_rho}
        \langle \nabla M^t_G(x),x-x^*\rangle\geq \min \Bigl(\frac{\rho}{4},\frac{1}{2t}\Bigr)\|x-x^*\|^2.
    \end{equation}
\end{proof}

\subsection{Consistency of \texorpdfstring{\gls{daz}}{DAZ}}%
\label{sec:appendix_consistency}
\subsubsection{Proof of \texorpdfstring{\Cref{lemma:moreau_diffble}}{Lemma 4.18}}\label{sec:moreau_diffble}
\begin{proof}
    Let $0 < s < t < \tmax$.
    By definition of the \MoreauName{} envelope, we find that
    \begin{equation}
        \begin{aligned}
            &\frac{1}{2t}\|x-p_t\|^2+G(p_t) - \frac{1}{2s}\|x-p_t\|^2-G(p_t) \\
            &\qquad\leq M_G^t(x)-M_G^s(x)\\
            &\qquad\leq\frac{1}{2t}\|x-p_s\|^2+G(p_s) - \frac{1}{2s}\|x-p_s\|^2-G(p_s),
        \end{aligned}
    \end{equation}
    and, consequently, that
    \[
        -\frac{1}{2st}\|x-p_t\|^2\leq \frac{M_G^t(x)-M_G^s(x)}{t-s}\leq -\frac{1}{2st}\|x-p_s\|^2.
    \]
    By \Cref{lemma:prox_time_cont} the result follows and, in particular, we obtain $\partial_t M_G^t(x) = -\frac{1}{2t^2}\|x-p_t\|^2$.
\end{proof}
\subsubsection{Proof of \texorpdfstring{\Cref{prop:Moreau_cont}}{Proposition 4.20}}\label{sec:Moreau_cont}
\begin{proof}
    Using \Cref{lemma:moreau_diffble} we find for $0<s<t$
    \begin{equation}
        \begin{aligned}
            |M_G^t(x)-M_G^s(x)| \leq \int_s^t |\partial_\tau M_G^\tau(x)|\;d\tau
            = \frac{1}{2}\int_s^t\frac{1}{\tau^2}\|x-p_\tau\|^2 \;d\tau.
        \end{aligned}
    \end{equation}
    By definition of the \MoreauName{} envelope $\frac{1}{\tau}(x-p_\tau)\in\partial G(p_\tau)$.
    Therefore, $\frac{1}{\tau^2}\|x-p_\tau\|^2\leq \phi(\|x\|)^2$ and consequently $|M_G^t(x)-M_G^s(x)|  \leq \frac{|t-s|}{2}\phi(\|x\|)^2$.
    When $s=0$, it holds that \( |M^0_G(x)-M^t(x)| = \lim_{s\rightarrow 0}|M^s_G(x)-M^t_G(x)| \leq \lim_{s\rightarrow 0}\frac{|t-s|}{2}\phi(\|x\|)^2 = \frac{|0-t|}{2}\phi(\|x\|)^2 \).
\end{proof}

\subsubsection{Miscellaneous results}
\begin{lemma}\label{lemma:U_linear_growth}
    Let $H:\R^d\rightarrow\R$ be locally bounded and convex outside the ball $B_M(0)$ and such that $\int\exp(-H(x))\dd x <\infty$. Then, $H$ grows asymptotically at least linearly, \ie, there exist a $C>0$ such that $H(x)\geq C\|x\|$ for sufficiently large $x$.
\end{lemma}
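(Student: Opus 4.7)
The plan is to argue by contradiction, exploiting convexity outside $B_M$ to transfer the sublinear growth at a single point $x_n$ onto a whole cone, and then to show that this cone contributes an unbounded amount to $\int e^{-H}$, contradicting integrability. Suppose no such $C>0$ exists; then we can extract a sequence $(x_n) \subset \R^d$ with $R_n := \|x_n\| \to \infty$ and $H(x_n) \leq R_n/n$.

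Set $C' := \sup_{\|y\| = 2M} H(y) \in \R$, which is finite by local boundedness. For every $y \in \partial \Ball{2M}{0}$, both $x_n$ and $y$ lie in $\BallC{M}{0}$, so the convexity hypothesis gives, for all $\lambda \in [0,1]$,
\[
    H(\lambda x_n + (1-\lambda) y) \leq \lambda H(x_n) + (1-\lambda) C'.
\]
This furnishes a uniform upper bound for $H$ on the "ice-cream cone" $T_n$ equal to the convex hull of $\{x_n\} \cup \overline{\Ball{2M}{0}}$ (via any representation of a given point as such a convex combination).

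I will then parametrize $T_n$ in polar coordinates centered at the apex $x_n$: directions $u \in S^{d-1}$ range over the solid angle $\Omega_n$ subtended at $x_n$ by $\Ball{2M}{0}$—whose measure is of order $R_n^{-(d-1)}$ for $n$ large—and the radial parameter $s$ runs in $[0, L(u)]$ with $L(u)$ of order $R_n$. Along each such ray, the convex-combination parameter satisfies $\lambda = 1 - s/R_n + O(1/R_n)$, so the convexity bound becomes $H(x_n + su) \lesssim (1 - s/R_n) H(x_n) + (s/R_n) C'$. On a suitably chosen "outer slice" $s \in [R_n - \delta_n, R_n]$—with $\delta_n$ of order $R_n$ when $H(x_n)$ remains bounded, and of order $R_n/H(x_n)$ when $H(x_n)\to\infty$—this bound stays uniformly $O(C')$, and the polar volume element $s^{d-1} \, ds \, d\Omega$ yields
\[
    \int_{\R^d} e^{-H} \geq \int_{T_n} e^{-H} \gtrsim e^{-O(C')} |\Omega_n| \int_{R_n - \delta_n}^{R_n} s^{d-1} \, \dd s \gtrsim \delta_n.
\]

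In both regimes $\delta_n \to \infty$, contradicting $\int e^{-H} < \infty$ and completing the proof. The main technical step is the geometric bookkeeping in the third paragraph: relating the convex-combination parameter $\lambda$ to the radial variable $s$, estimating the solid angle $|\Omega_n|$ and the Jacobian from below, and making the case distinction on $H(x_n)$ uniform. Once these ingredients are in place, the contradiction with integrability reduces to elementary asymptotics of the displayed integral.
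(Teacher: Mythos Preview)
Your argument is correct in outline and rests on the same geometric idea as the paper---the ``ice-cream cone'' $T_n=\text{co}(\{x_n\}\cup \overline{B_{2M}(0)})$ on which convexity outside $B_M(0)$ furnishes an upper bound for $H$---but the paper organizes the proof differently and thereby sidesteps all of your polar-coordinate bookkeeping.

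The paper proceeds in two steps. First it shows that the sublevel set $\{H\le m+1\}$, with $m=\sup_{B_{2M}(0)}H$, is bounded: if not, pick $y_n$ with $\|y_n\|\to\infty$ and $H(y_n)\le m+1$; then $H\le m+1$ on the entire cone $\text{co}(B_{2M}(0)\cup\{y_n\})$, whose Lebesgue measure grows like $\|y_n\|$, contradicting integrability. Second, once $\{H\le m+1\}\subset B_R(0)$, the linear growth follows from a one-line radial convexity argument: for $\|x\|\ge R$, writing $R\tfrac{x}{\|x\|}=(1-\lambda)\,2M\tfrac{x}{\|x\|}+\lambda x$ with $\lambda=\tfrac{R-2M}{\|x\|-2M}$ and applying convexity gives
\[
H(x)\ \ge\ \inf_{\|y\|=2M}H(y)+\frac{\|x\|-2M}{R-2M}\Bigl(\inf_{\|y\|=R}H(y)-m\Bigr)\ \ge\ \inf_{\|y\|=2M}H(y)+\frac{\|x\|-2M}{R-2M}.
\]

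The difference is that you run the cone argument once, directly against $H(x_n)\le R_n/n$, and must therefore quantify how large a portion of $T_n$ carries a uniformly bounded value of $H$; this is what forces the polar parametrization, the solid-angle estimate $|\Omega_n|\asymp R_n^{-(d-1)}$, and the case split on whether $H(x_n)$ stays bounded. The paper instead uses the cone only for the crude qualitative statement ``some sublevel set is bounded'' (where no volume calibration is needed---the whole cone works) and then extracts the linear rate by radial convexity. Your route is self-contained but heavier; the paper's is shorter and delivers an explicit constant $C=1/(R-2M)$ without any asymptotic estimates.
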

\begin{proof}
    The proof is similar to \cite[Lemma 4]{durmus2019analysis}.
    Let $M$ be such that $H$ is convex outside of $\Ball{\ConvexityRadius}{0}$. Define $m=\sup_{x\in\Ball{2M}{0}} H(x)<\infty$. We claim now that the set $\{H\leq m+1\}$ is bounded. Assume to the contrary, there exists $y_n$, $\|y_n\|\geq n$ such that $H(y_n)\leq m+1$. By convexity outside a ball it follows that $\text{co}(\Ball{2\ConvexityRadius}{0}\cup \left\{ y_n\right\})\subset \{H\leq m+1\}$ where co denotes the convex hull. To see that, let $z\in \text{co}(\Ball{2\ConvexityRadius}{0}\cup \left\{ y_n\right\})$, \ie, there exists $k\in\N$, $x_1,\dots,x_k\in \Ball{2M}{0}$, and $\lambda_1,\dots,\lambda_{k+1}\geq 0$ with $\sum_i \lambda_i=1$ such that $z=\sum_{i=1}^k \lambda_i x_i + \lambda_{k+1}y_n$. We have to show, that $H(z)\leq m+1$. If $\|z\|\leq 2M$ the result follows by definition of $m$, thus, let us assume that $\|z\|>2M$ which also implies $\lambda_{k+1}>0$. Note that
    \[
        z=\left(\sum_{i=1}^k \lambda_i\right) \underbrace{\frac{\sum_{i=1}^k \lambda_i x_i}{\sum_{i=1}^k \lambda_i}}_{\in \Ball{2M}{0}} + \left(1-\sum_{i=1}^k \lambda_i\right) y_n.
    \]
    That is, $z$ is a convex combination of a single element in $\Ball{2\ConvexityRadius}{0}$ and $y_n$. It is easy to see that since $\|z\|>2\ConvexityRadius$ we can also find $x\in\R^d$ with $\|x\|=2\ConvexityRadius$ such that $z$ is a convex combination of this specific $x$ and $y_n$, that is, there exists $\mu\in [0,1]$ such that $z=\mu x + (1-\mu)y_n$. But then, convexity outside a ball implies $H(z)\leq m+1$, yielding that $\text{co}(\Ball{2\ConvexityRadius}{0}\cup \left\{ y_n\right\})\subset \{H\leq m+1\}$.
    
    Since the Lebesgue measure of $\text{co}(\Ball{2M}{0}\cup \left\{ y_n\right\})$ tends to infinity as $n\rightarrow\infty$, we obtain a contradiction to $\int \exp{\bigl(-H(x)\bigr)}\, \dd x<\infty$ (\cf \cite[Lemma 4]{durmus2018efficient}).
    As a consequence the set $\{H\leq m+1\}$ is bounded, \ie, there exists $R>2M$ such that for $\|x\|\geq R$ it follows $H(x)>m+1$.
    
    Now take $x \in \BallC{R}{0}$.
    We can write $R\frac{x}{\|x\|} = (1-\lambda)2M\frac{x}{\|x\|} + \lambda x$ with $\lambda=\frac{R-2M}{\|x\|-2M}$. It follows by convexity outside a ball, that
    \begin{equation}
        H\bigl(R\tfrac{x}{\|x\|}\bigr)\leq \lambda H(x) + (1-\lambda)H\bigl(2M\tfrac{x}{\|x\|}\bigr)
    \end{equation}
    and as a result
    \begin{equation}
        \begin{aligned}
            H(x) - H\bigl(2M\tfrac{x}{\|x\|}\bigr) &\geq \frac{1}{\lambda}\Bigl(H\bigl(R\tfrac{x}{\|x\|}\bigr) - H\bigl(2M\tfrac{x}{\|x\|}\bigr)\Bigr)\\
            &\geq \frac{\|x\|-2M}{R-2M}\Bigl(\inf_{\|y\|=R}H(y) - \sup_{\|y\|=2M}H(y)\Bigr).
        \end{aligned}
    \end{equation}
    Since $H(y)\leq m$ in $\Ball{2\ConvexityRadius}{0}$ and $H(y)>m+1$ for $\|x\|\geq R$ the result follows.
\end{proof}

\begin{lemma}\label{lem:prox_growth_bound}
    Let $G$ be superexponential, then for a sufficiently large $x \in \R^d$, it follows that $\|\prox_{tG}(x)\|\leq \|x\|+c$ for some $c>0$.
\end{lemma}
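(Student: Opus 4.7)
The plan is to extract a growth bound on the proximal map from the first-order optimality condition defining it, combined with the superexponential inequality \eqref{eq:superexp}.

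First, I would fix $p \in \prox_{tG}(x)$ and apply the first-order optimality condition of the minimization problem defining $p$. Since $G$ admits a regular subgradient everywhere (see \Cref{remark:ass}) and the quadratic term $y \mapsto \tfrac{1}{2t}\|x-y\|^2$ is smooth, the sum rule for regular subgradients yields some $v \in \partial G(p)$ with $tv = x - p$.

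Next, I would distinguish two cases based on whether $\|p - x^*\| \geq M_\rho$, where $x^*$ denotes the minimizer of $G$ featured in \Cref{def:superexponential}. In the small-distance case, the triangle inequality immediately gives the constant bound $\|p\| \leq M_\rho + \|x^*\|$. In the large-distance case, substituting $v = \tfrac{1}{t}(x - p)$ into \eqref{eq:superexp} and decomposing $x - p = (x - x^*) - (p - x^*)$ leads to
\[
    \langle x - x^*, p - x^* \rangle \geq (1 + t\rho)\|p - x^*\|^2.
\]
Applying Cauchy--Schwarz and dividing by the (strictly positive) norm $\|p - x^*\|$ produces the \emph{contractive} bound $\|p - x^*\| \leq \tfrac{1}{1 + t\rho}\|x - x^*\|$, and the triangle inequality then yields $\|p\| \leq 2\|x^*\| + \|x\|$.

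Combining the two cases with the choice $c = \max\{2\|x^*\|, M_\rho + \|x^*\|\}$ proves the bound in fact for all $x \in \R^d$, so the stipulation that $x$ be sufficiently large is inessential. I do not expect a substantive obstacle: the entire argument reduces to the prox optimality condition and a one-line manipulation of the superexponential inequality; the only minor technicality is the subgradient sum rule, which is standard once one summand is smooth.
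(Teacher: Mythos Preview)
Your argument is correct and follows essentially the same line as the paper: write $x = p + tv$ for some $v\in\partial G(p)$ and insert the superexponential inequality applied at $p$. The paper expands $\|x-x^*\|^2 = \|p-x^*\|^2 + 2t\langle v,p-x^*\rangle + t^2\|v\|^2$ and drops the two nonnegative terms to obtain $\|p-x^*\|\leq\|x-x^*\|$; you instead rearrange via Cauchy--Schwarz and get the slightly sharper $\|p-x^*\|\leq \tfrac{1}{1+t\rho}\|x-x^*\|$. The one genuine organizational difference is the treatment of the regime $\|p-x^*\|<M_\rho$: the paper disposes of it by invoking \Cref{lemma:prox_coercive} (for $\|x\|$ large, $\|p\|$ is large, hence $\|p-x^*\|\geq M_\rho$), whereas you simply observe that this case already gives $\|p\|\leq M_\rho+\|x^*\|$. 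Your route is therefore marginally more self-contained and, as you note, yields the bound for every $x$ rather than only for large $x$.
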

\begin{proof}
    Let $p\in\prox_{tG}(x)$.
    By definition of the proximal map there exists a $v\in\partial G(p)$ such that $0=\frac{1}{t}(p-x) + v$.
    Since $G$ is superexponential, there exist $x^*\in\R^d$, $\rho> 0$, and $M_\rho >0$ such that $\|x-x^*\|\geq M_\rho$ implies $\langle v, x-x^*\rangle \geq \rho\|x - x^*\|^2$, $v\in\partial G(x)$.
    Now let $R>0$ be such that \( x \in \BallC{R}{0} \) implies $\|p-x^*\|\geq M_\rho$ (\cf \Cref{lemma:prox_coercive}).
    Then, we can conclude for such \( x \) that
    \begin{equation}
        \begin{aligned}
        \|x-x^*\|^2 &= \|p-x^*\|^2 + 2t\langle p-x^*,v\rangle + t^2\|v\|^2\\
                    &\geq \|p-x^*\|^2.
        \end{aligned}
    \end{equation}
\end{proof}

\section{Implementational Details}

\begin{algorithm}
  \begin{algorithmic}[1]
    \Require Number of iterations $K$, initial condition $x^{0}$, initial $L_0$, number of backtracking iterations $J$, $\beta \in (0, 1)$, $\gamma > 1$, relative tolerance $r$
    \State $x^{-1} = x^{0}$
    \For{$k = 0, 1, \dots, K-1$}
    \State $\bar{x} = x^k + ( x^k  - x^{k-1}) / \sqrt{2}$
    \For{$j = 0, 1, \dots, J-1$} \Comment{Lipschitz backtracking procedure~\cite{cocain2020}}
        \State $x^{k+1} = \operatorname{prox}_{\frac{1}{L_k} g}( x - \nabla f(\bar{x} /L_k))$
        \If{$f(x^{k+1}) \leq  f( \bar{x}) + \langle \nabla f(\bar{x}), x^{k+1} - \bar{x} \rangle + \frac{L_k}{2}\|{\bar{ x} -  x^{k+1}}\|^2$}
            \State $L_k = \beta L_k$
            \State \textbf{break}
        \EndIf
            \State $L_{k} = \gamma L_k$
    \EndFor
    \If{$\| x^k - x^{k+1} \| / \| x^k \| \leq r$} \Comment{Stopping criterion}
        \State \textbf{break}
    \EndIf
    \EndFor
    \State \Return $x^k$
  \end{algorithmic}
  \caption{\Gls{apgd} algorithm with Lipschitz backtracking.}
  \label{algo:apgd}
\end{algorithm}
\end{document}